\theoremstyle{plain}
\newtheorem{theorem}{Theorem}[section]
\newtheorem{lemma}[theorem]{Lemma}
\newtheorem{proposition}[theorem]{Proposition}
\newtheorem{definition}[theorem]{Definition}
\theoremstyle{definition}
\newcommand{\E}{\mathbb{E}}
\newcommand{\Prob}{\mathbb{P}}
\renewcommand{\P}{\mathbb{P}}
\newcommand{\Zd}{\mathbb{Z}^d}
\newcommand{\Qcal}{\mathcal{Q}}
\newcommand{\Ccal}{\mathcal{C}}
\DeclareMathOperator*{\argmax}{arg\,max}
\numberwithin{equation}{section}
\title{Structures in supercritical scale-free percolation}
\author{Markus Heydenreich}
\address{Mathematisches Institut\\ Ludwig-Maximilians-Universit\"at M\"unchen\\ Theresienstra{\ss}e 39\\ D-80333 M\"unchen\\ Germany}
\email{m.heydenreich@lmu.de}
\author{Tim Hulshof}
\author{Joost Jorritsma}
\address{Department of Mathematics and Computer Science\\ Eindhoven University of Technology\\ P.O. Box 513\\ 5600 MB Eindhoven\\ The Netherlands}
\email{w.j.t.hulshof@tue.nl, j.jorritsma@student.tue.nl}
\date{\today}
\keywords{percolation, random graphs, scale-free network, real-world network modeling, graph distance, hierarchical clustering,  transience vs.\ recurrence}
\subjclass[2010]{60K35, 05C80, 82B20}
\begin{document}
\begin{abstract}
Scale-free percolation is a percolation model on $\Zd$ which can be used to model real-world networks. We prove bounds for the graph distance in the regime where vertices have infinite degrees. We fully characterize transience vs.\ recurrence for dimension 1 and 2 and give sufficient conditions for transience in dimension 3 and higher. Finally, we show the existence of a hierarchical structure for parameters where vertices have degrees with infinite variance and obtain bounds on the cluster density.
\end{abstract}

\maketitle

\section{Introduction}
Random graphs are mathematical models commonly used to study real-world networks such as the World-Wide Web, social, financial, neural, and biological networks. 
Many real-world networks exhibit the following two properties:
		\begin{itemize}
			\item \emph{The small-world property:} distances within the network are very small in comparison to the number of nodes. With ``small'' we mean distances are at most of order of an iterated logarithm.
			Some real-world networks are even \emph{ultra-small,} meaning that the distances are at most a double logarithm.
			\item \emph{The scale-free property:} the number of connections per node behave statistically like a power-law. This implies that the variation is typically very high.			
		\end{itemize}
An example of a random graph model with these properties is the \emph{Norros-Reittu random graph} \cite{norros2006} (see Figure \ref{FigureNorros}). This model produces a random graph $G=(V,E)$ on a fixed set of vertices $V$, but with a random edge set $E\subset V\times V$ as follows: Every vertex $x\in V$ is assigned an i.i.d.\ random weight $W_x>0$.  Conditioned on the weights of its end-vertices, the edge $\{x,y\}$ is present in $E$ with probability $p_{xy}=1-\exp(W_xW_y/\mathcal{N})$, independently of the status of other possible edges (here $\mathcal{N}$ is a normalizing constant). See \cite{RGN} for more results on inhomogeneous random graphs.  	
		 
		These two properties are important, but the structure of many real-life networks, such as social networks, often have other features that influence the structure and formation of networks: 
		\begin{itemize} 
		\item \emph{Geometric clustering:} in social networks this manifests itself because people who are geographically close to each other are more likely to know each other, giving rise to formation of locally concentrated clusters within the network.
		\item \emph{Hierarchies:} again in social networks, the more `important' people are, the more likely they know other important people, even if those people might be far away, giving rise to hierarchies within the network.
		\end{itemize}
		
		A well-known model that has geometric clustering and the connections over long distances required for the existence of hierarchies is \emph{long-range percolation} (LRP, see Figure \ref{FigureLRP}) \cite{Benjamini, BergerTransience,biskup2004, HeyHofSak08, Schulman}. LRP is a percolation model that produces random subgraphs of the graph $(\Zd, \Zd \times \Zd)$ wherein an edge $\{x,y\}\in\Zd \times \Zd$ is (independently) \emph{retained} with probability $p_{xy} \propto \lambda/|x-y|^\alpha$ for some positive constants $\lambda$ and $\alpha$, and removed otherwise. Thus, the connection probabilities are monotonically decreasing in $\alpha$, and increasing in $\lambda$. For many choices of $d$ and $\alpha$, LRP has a \emph{percolation phase transition} in $\lambda$, meaning that there exists $\lambda_c(d, \alpha) \in (0,\infty)$ such that when $\lambda > \lambda_c$ there exists an infinite cluster almost surely, whereas when $\lambda < \lambda_c$, all clusters are almost surely finite. When $\alpha\in (d,2d)$, this model has the clustering property, as well as something akin to the small-world property \cite{biskup2004}. It is, however, clearly not scale-free, since the decay of the degree distribution is faster than exponential.  

		Various models have been introduced in the recent years that combine three or four of the network properties described above. We mention, for instance, the models introduced by Aiello \emph{et al.}\ \cite{aiello2008},  Flaxman, Frieze, and Vera \cite{flaxman2006},  and Jacob and M\"orters \cite{jacob2013}. 
		
		In this paper we consider another model that has all four properties: \emph{scale-free percolation} (SFP, also known as \emph{heterogeneous long-range percolation}). SFP interpolates between long-range percolation and the Norros-Reittu random graph (see Figure \ref{FigureScalefree}). SFP was introduced by Deijfen, van der Hofstad, and Hooghiemstra in \cite{DeijfenScaleFree}. We start with a formal definition of the model.
		 
		\begin{definition}[Scale-free percolation]\label{SFPDef}
					Consider the graph $(\Zd, \Zd \times \Zd)$ for some fixed $d\geq1$. Assign to each vertex $x\in\Zd$ an i.i.d.\ weight $W_x$, where the weights follow a power-law distribution with parameter $\tau-1$:
					\begin{equation*}\label{powerlawDistribution}
					\Prob(W_x>w)= w^{-(\tau-1)}L(w), \qquad w > 0, 
					\end{equation*}
					where $L$ is a slowly-varying function (i.e., $L(wa)/L(w)\rightarrow1$ for all $a>0$ as $w\rightarrow\infty$, so the law of $W_x$ is $(\tau-1)$-regularly varying).
					Conditionally on the weights, an edge $\{x,y\}\in\Zd\times\Zd$ is retained independently of all other edges with probability 
					\begin{equation*}
					p_{xy}=1-\exp\left(-\lambda \frac{W_xW_y}{\vert x-y\vert^\alpha}\right),
					\end{equation*}
					where $\vert x\vert=\| x \|_1$ and $\lambda, \alpha>0$ are positive constants of the model.\footnote{We choose to work with the $\ell_1$-norm because it is a practical metric, but defining SFP with respect to any $\ell_p$-norm with $p \in [1,\infty]$ gives qualitatively similar results.} The edge is removed otherwise. We call retained edges \emph{open}, and removed edges \emph{closed.}
We denote the joint probability measure of edge occupation and weights by $\P_{(\lambda,W)}$ (where the subscript $W$ refers to the \emph{law} of the weights, not the actual values) and write just $\P$ if the parameters are clear from the context. 			
		\end{definition}
		
		\begin{figure}
		\begin{subfigure}{.48\linewidth}
		\includegraphics[keepaspectratio = True,width = 0.99\linewidth]{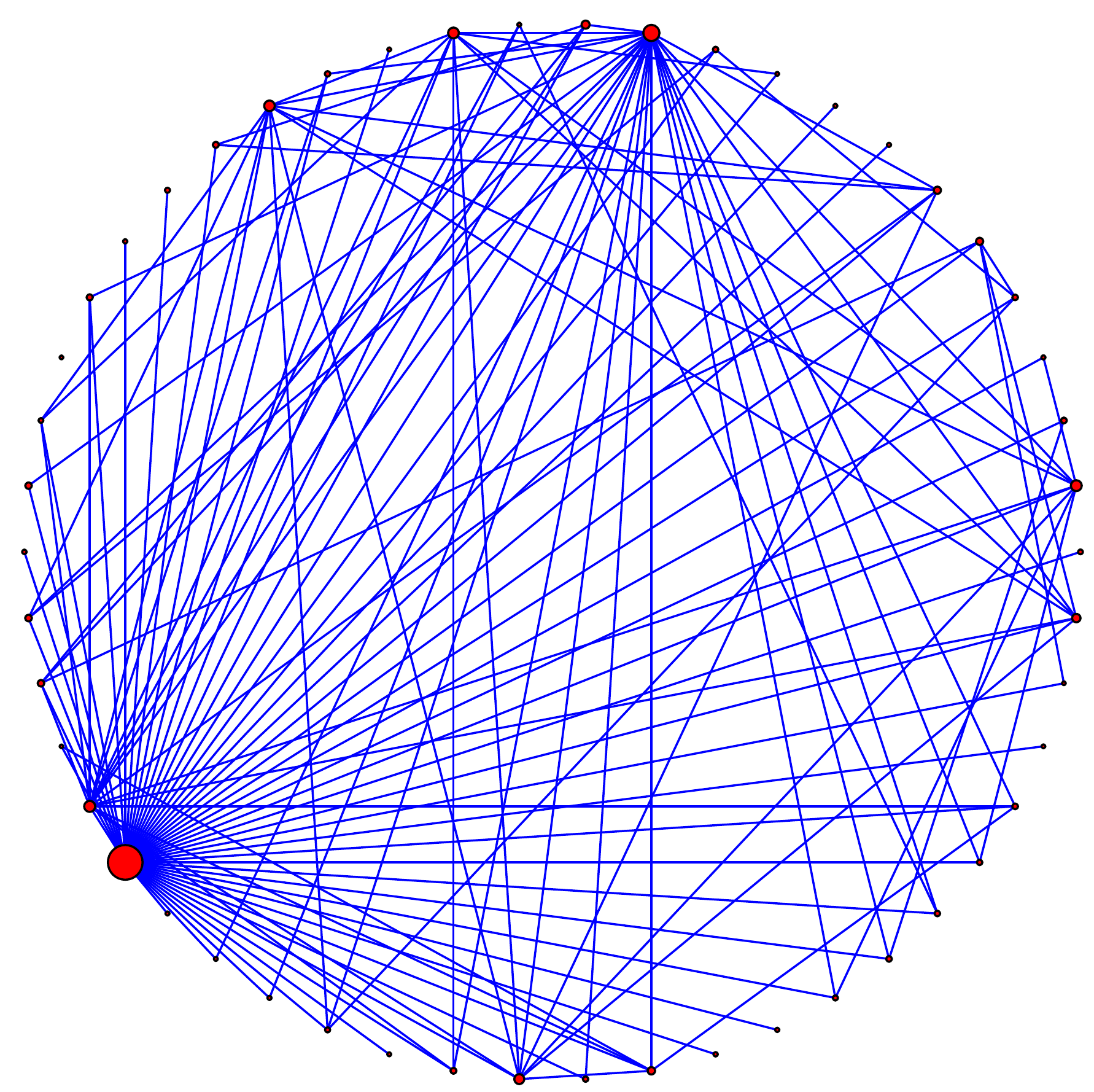}
		\caption{Norros-Reittu random graph for $\tau = 1.95$}\label{FigureNorros}
		\end{subfigure}	
		\begin{subfigure}{.48\linewidth}
		\includegraphics[keepaspectratio = True,width = 0.99\linewidth]{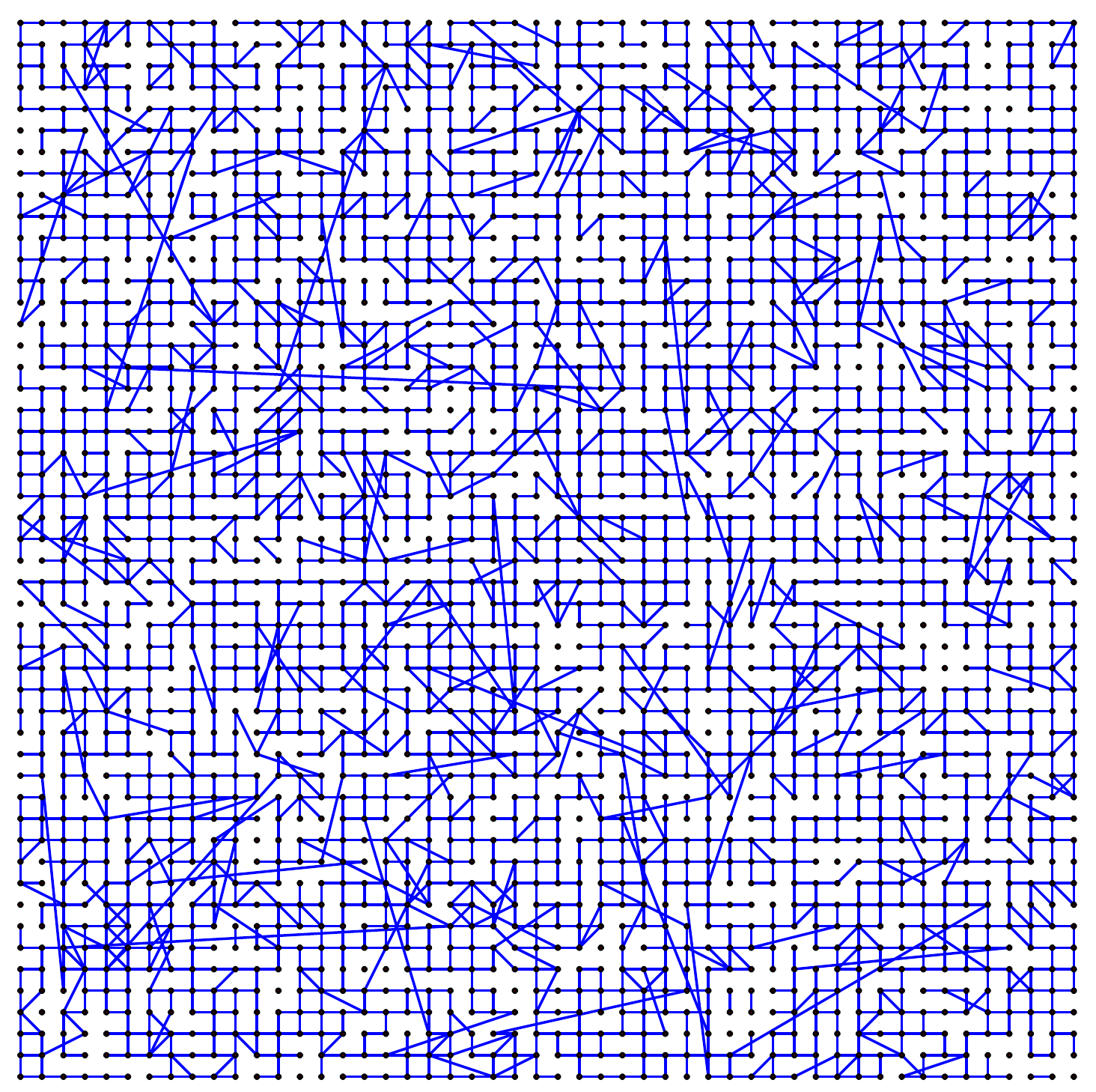}
		\caption{Long-range percolation for $\alpha=3.9, \lambda = 0.9$}\label{FigureLRP}
		\end{subfigure}	
		\begin{subfigure}{.96\linewidth}
		\includegraphics[keepaspectratio = True,width = 0.99\linewidth]{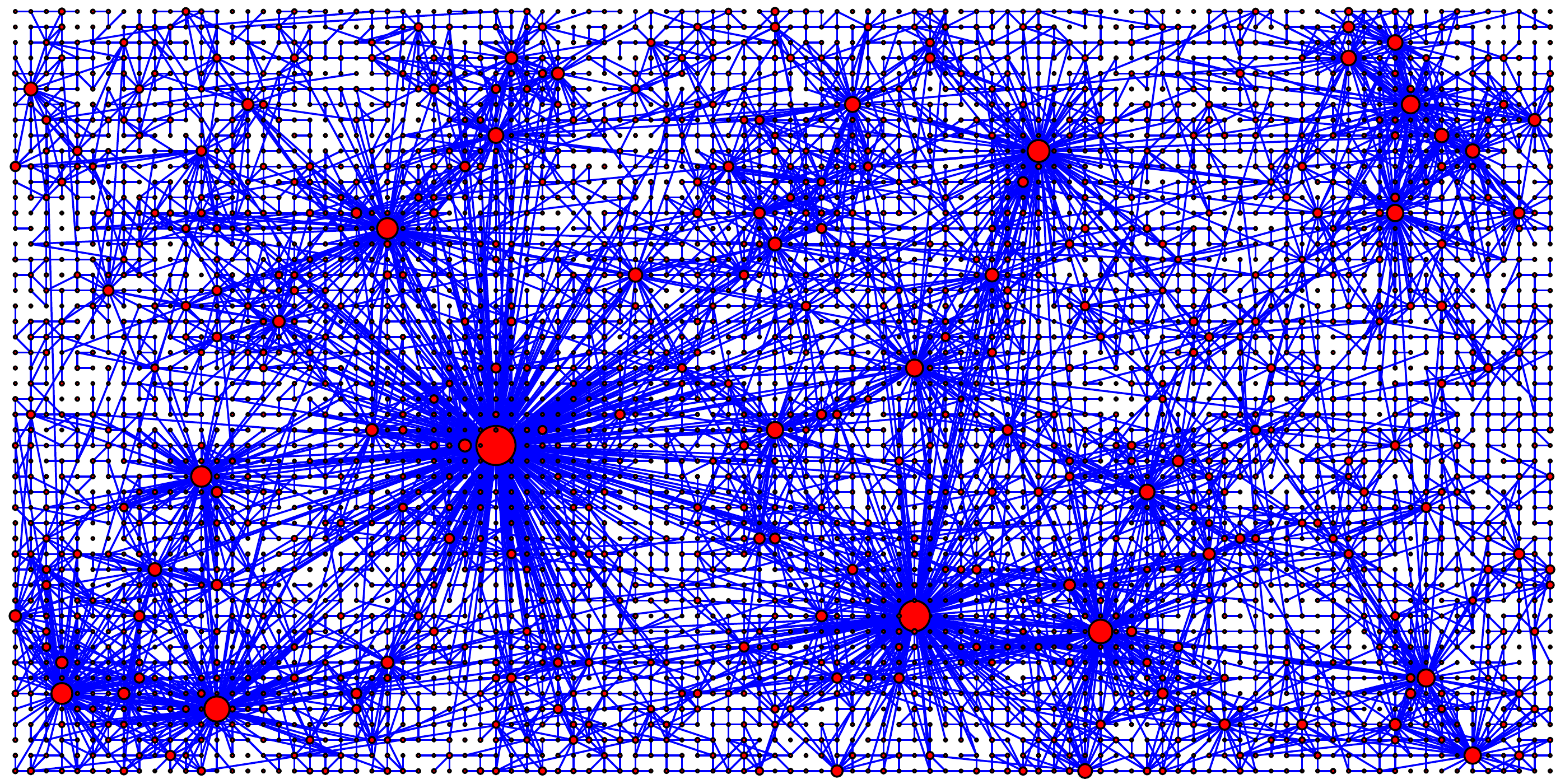}		
		\caption{Scale-free percolation for $\alpha = 3.9, \tau=1.95, \lambda = 0.1$}\label{FigureScalefree}
		\end{subfigure}
		\caption{Simulations of the Norros-Reittu random graph (A), long-range percolation (B), and scale-free percolation. The size of the vertices is drawn proportionally to their weights.}		
		\end{figure}

	Before we proceed with our results, let us briefly summarize some important features of SFP, as proved by Deijfen, van der Hofstad, and Hooghiemstra \cite{DeijfenScaleFree}, and by Deprez, Hazra, and W\"uthrich \cite{DeprezInhomogeneous}.	
		
It turns out that the following parameter is frequently useful to describe the behaviour of SFP concisely:
\begin{equation}
	\gamma:=\frac{\alpha(\tau-1)}{d}.
\end{equation}	

Like long-range percolation, SFP on $\Zd$ with parameter $\alpha$ and i.i.d.\ vertex weights whose law $W$ is $(\tau-1)$-regularly varying has a percolation phase-transition in $\lambda$ at 
\begin{equation}
	\lambda_c = \lambda_c(d,\alpha, W)
	:=\inf\big\{\lambda>0\,\big|\,\text{there exists an infinite cluster }\mathcal C_\infty\big\}.
\end{equation} 
This phase transition is non-trivial, except when $d\ge1$ and $\gamma<2$, in which case $\lambda_c=0$, and when $d=1$, $\gamma>2$, and $\alpha>2d$, in which case $\lambda_c=\infty$ \cite{DeijfenScaleFree}.
In the regime where SFP percolates, the infinite cluster $\mathcal{C}_\infty$ is almost surely unique \cite{Gandolfi}. Deprez \emph{et al.}\ show that the percolation density of SFP is continuous when $\alpha\in(d,2d)$: at $\lambda=\lambda_c$ there is no infinite cluster almost surely~\cite{DeprezInhomogeneous}.

		  By the choice of the power-law distribution, this model is scale-free. Indeed, the degrees $D$ follow a power-law of the form 
\[	\Prob(D>s)=s^{-\gamma}\ell(s)\]
for some slowly varying function $\ell(s)$ \cite{DeijfenScaleFree}.	  
		  This shows that the model behaves differently from long-range percolation. Many real-world networks are believed to have infinite variance degree distributions.  SFP has infinite variance degrees when $\gamma<2$. When $\gamma < 2$, SFP locally behaves like an ultra-small world \cite{DeijfenScaleFree}. 

	Under the assumption that the weights are bounded away from 0, the probability that an edge is open in scale-free percolation with parameters $\alpha, \tau$ and $\lambda$ stochastically dominates the probability that an edge is open in long-range percolation with parameters $\alpha$ and some $\lambda'>0$. Deprez \emph{et al.}\ \cite{DeprezInhomogeneous} use this domination to show that SFP locally has the small-world and clustering properties when $\alpha\in(d,2d)$, analogous to long-range percolation \cite{biskup2004}.  

\section{Main results}

\subsection*{Distances within the infinite percolation cluster}		
Given a graph $G = (V,E)$, the \emph{graph distance on $G$} between any $x,y \in V$ is defined as
\[
	d_{G}(x,y) = \# \text{\emph{ edges in $E$ on a shortest path from $x$ to $y$,}}
\]
with the conventions that $d_G(x,x) =0$ and $d_G(x,y) =\infty$ if $x$ and $y$ are not in the same connected component of the graph. We define the \emph{diameter} of $G$ as the maximal distance between two vertices in $G=(V,E)$, i.e., $\mathrm{diam}\, G = \max_{x,y \in V} d_{G}(x,y)$.

The infinite random subgraph $\Ccal$ of $(\Zd, \Zd \times \Zd)$ corresponding to the infinite component of supercritical SFP thus naturally produces a random metric on $\Zd$. We write $d_\Ccal$ for this metric. We write $x\wedge y$ for the minimum of $x$ and $y$.
Our first result is the proof of a conjecture by Deijfen \emph{et al.}\ \cite{DeijfenScaleFree}. 
	 
	\begin{theorem}[Finite diameter in the infinite-degree cases]\label{thmGraphDiameter}
Consider SFP on $\Zd$ with $d \ge 1$, $\lambda > 0$, and with i.i.d.\ vertex weights whose law $W$ satisfies for some $\tau >1$ and some $c >0$,
\begin{equation}\label{lowerBoundWeights}
\Prob(W\geq w)\geq cw^{-(\tau-1)}\wedge 1,\qquad \text{ for all } w>0.
\end{equation}
Then $\mathrm{diam}\, \Ccal = 2$ almost surely when $\gamma \le 1$, and $\mathrm{diam}\, \Ccal \le \lceil d/(d-\alpha)\rceil$ almost surely when $\alpha < d$.
\end{theorem}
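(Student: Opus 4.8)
The plan is to reduce both statements to a single pair of vertices. Since $\Zd$ is countable, if I can show that for each fixed pair $x,y$ one has $\mathbb{P}(d_\Ccal(x,y)\le k_0)=1$, with $k_0=2$ in the regime $\gamma\le 1$ and $k_0=\lceil d/(d-\alpha)\rceil$ in the regime $\alpha<d$, then a countable intersection gives that almost surely \emph{every} pair is within graph distance $k_0$; in particular $\Ccal=\Zd$ and $\mathrm{diam}\,\Ccal\le k_0$. The common engine in both regimes is a second Borel--Cantelli argument over disjoint regions of space whose associated connection events are conditionally independent given the weights, together with a divergent sum of their probabilities. The lower tail \eqref{lowerBoundWeights} is exactly what keeps enough high-weight vertices available in every region.

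For $\gamma\le 1$ I look for a common neighbour. Conditioning on the weights, $\mathbb{P}(x\not\sim z\mid W)=\exp(-\lambda W_xW_z/|x-z|^\alpha)$, so a vertex $z$ in the dyadic annulus $\{\rho\le |z-x|\le 2\rho\}$ whose weight exceeds $t_\rho:=\rho^\alpha/(\lambda\,(W_x\wedge W_y))$ is simultaneously adjacent to both $x$ and $y$ with probability at least $(1-e^{-1})^2$. Using \eqref{lowerBoundWeights}, the expected number of such hubs in the annulus is at least of order $\rho^{d}\,t_\rho^{-(\tau-1)}\asymp \rho^{\,d(1-\gamma)}$. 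When $\gamma<1$ this grows in $\rho$, and a concentration estimate shows that almost surely infinitely many hubs are present; when $\gamma=1$ it is bounded below by a constant on each scale, so summing the contributions of the disjoint (hence conditionally independent) annuli $\rho=2^m$ and applying the second Borel--Cantelli lemma again produces infinitely many hubs. In either case one of them is almost surely a common neighbour of $x$ and $y$, so $d_\Ccal(x,y)\le 2$. The matching lower bound $\mathrm{diam}\,\Ccal\ge 2$ is immediate, since each edge is closed with positive probability and hence on $\Ccal=\Zd$ some pair of vertices is almost surely non-adjacent.

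For $\alpha<d$ I count short paths. Writing $g_m(x,y)$ for the expected number of open paths with $m$ edges from $x$ to $y$, independence of distinct edges gives $g_m(x,y)=\sum_{z_1,\dots,z_{m-1}}\prod_i p_{z_{i-1}z_i}$, which up to the weights is an $m$-fold lattice convolution of the Riesz-type kernel $|u|^{-\alpha}$. Since $\alpha<d$ all these kernels remain long-range, and the composition rule gives the $m$-step decay
\[
	g_m(x,y)\asymp |x-y|^{-(m\alpha-(m-1)d)},
\]
with $g_m$ finite precisely when the exponent $m\alpha-(m-1)d$ is positive. This exponent first becomes non-positive at $m=k:=\lceil d/(d-\alpha)\rceil$, because $m\alpha\le (m-1)d\iff m\ge d/(d-\alpha)$, so $g_k(x,y)=\infty$ for every fixed pair, the divergence being fuelled by intermediate vertices in far-away boxes. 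I would then partition this far region into disjoint boxes $B_1,B_2,\dots$: a $k$-path routed through $B_i$ uses only edges incident to $B_i$ together with the two edges joining $x,y$ to $B_i$, so paths through different boxes are edge-disjoint and their existence events are conditionally independent. Choosing the boxes so that $\sum_i \mathbb{P}(\exists\ k\text{-path through }B_i)=\infty$ and invoking the second Borel--Cantelli lemma yields a $k$-path almost surely, whence $d_\Ccal(x,y)\le k$.

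The main obstacle is this last upgrade in the $\alpha<d$ case: an infinite first moment $g_k=\infty$ does not by itself guarantee an existing path, so I must control the second moment of the number of $k$-paths inside each box $B_i$ and run a Paley--Zygmund argument to bound $\mathbb{P}(\exists\ k\text{-path through }B_i)$ from below by a constant multiple of the first moments. This requires checking that the intermediate exponents $m\alpha-(m-1)d$ stay strictly positive for $m<k$ so that competing paths do not overlap too heavily, and carefully carrying the random weights through the kernel estimates rather than treating them as constants, which is where \eqref{lowerBoundWeights} enters. By comparison the $\gamma\le 1$ case is routine once the hub count is in hand, since there the connecting structure is a single vertex and the independence across annuli is automatic.
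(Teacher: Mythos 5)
For the case $\gamma\le 1$ your argument is essentially the paper's: the paper also fixes a pair of vertices, tiles space with dyadic annuli (centred at the midpoint, with side $2^k|x|$), extracts in each annulus a vertex of weight at least $2^{dk/(\tau-1)}$ with probability bounded below uniformly in $k$, shows that such a vertex is adjacent to both endpoints with probability bounded below uniformly in $k$ (this is exactly where $\gamma\le 1$ enters, as in your exponent $d(1-\gamma)$), and concludes by the second Borel--Cantelli lemma using independence across annuli. Your bookkeeping via the hub threshold $t_\rho$ and the expected hub count $\rho^{d(1-\gamma)}$ is a correct reformulation of the same computation, and the reduction from ``every pair is within distance $2$'' to the diameter statement by countable intersection is the same as the paper's appeal to translation invariance.

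For $\alpha<d$ you take a genuinely different route, and it is worth comparing. The paper observes that $\{d_{\Ccal}(0,x)\le\lceil d/(d-\alpha)\rceil\}$ is an increasing event and that \eqref{lowerBoundWeights} forces $W\ge c^{1/(\tau-1)}$ almost surely, so by stochastic domination (Lemma \ref{obs:increasing}) it suffices to treat constant weights, i.e.\ long-range percolation with $\alpha<d$; the claim is then exactly \cite[Example 6.1]{Benjamini}, and the proof is two lines. You instead propose to re-prove that long-range percolation fact from scratch by counting $k$-step paths, which is a legitimate self-contained alternative, but the step you yourself flag as ``the main obstacle'' --- the second-moment bound needed to run Paley--Zygmund inside each far-away box --- is precisely where all the work lives, and it is not carried out. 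Controlling the contribution of pairs of $k$-paths with partial edge overlap is a nontrivial multi-scale convolution estimate (for $\alpha$ close to $d$ the path length $k$ is large and the overlap combinatorics proliferate), so as written the $\alpha<d$ half is a correct plan rather than a proof. Note also that you never actually need to ``carry the random weights through the kernel estimates'': since the target event is increasing and the weights are bounded below, you may replace all weights by the constant $c^{1/(\tau-1)}$ at the outset, exactly as the paper does, which both simplifies your computation and makes clear that the remaining content is a statement about homogeneous long-range percolation that you can either prove via your second-moment scheme or simply cite.
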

Note that \eqref{lowerBoundWeights} implies $\P(W < c^{1/(\tau-1)}) =0$, thus the weights are bounded away from $0$.
See Figure \ref{phasesDistances} for an overview of the graph distances in which we combine the results of the present paper and those of \cite{DeijfenScaleFree,DeprezInhomogeneous}. Theorem \ref{thmGraphDiameter} thus complements the characterization of distances. Our proof for the case $\alpha<d$ is based on the proof of a similar result for long-range percolation with $\alpha<d$ by Benjamini, Kesten, Peres, and Schramm \cite{Benjamini}. 

For the Norros-Reittu random graph a similar result to Theorem \ref{thmGraphDiameter} is  known: van den Esker \emph{et al.}\ \cite{Esker} prove that when the weights are distributed as an infinite-mean power-law, then the diameter of the graph is almost surely $2$ or $3$  (more precise results are obtained under extra conditions). 

\subsection*{Transience and recurrence}
Graph distances are one way of characterizing the geometry of a graph. Another way of doing this is by studying the behaviour of random walk on the graph.
The notions of \emph{transience} and \emph{recurrence} are particularly relevant:
	\begin{definition}[Random walk, transience and recurrence]
	A simple random walk on a locally finite graph $G=(V,E)$ is a sequence $(X_n)_{n=0}^\infty$ with $X_0 \in V$ where $X_{n+1}$ is chosen uniformly at random from the ``neighbours'' of $X_n$, i.e., $$X_{n+1} \in \{x \in V \, : \, \{x, X_n\} \in E\},$$ independently of $X_0,\dots,X_{n-1}$. A graph is called \emph{recurrent} if for every $X_0$ a random walk returns almost surely to its starting point $X_0$. A graph is called \emph{transient} if it is not recurrent. 
	\end{definition}
 We prove the following two theorems, the results of which are summarized in the phase diagram in Figure \ref{phasesRandomWalk}.	

	\begin{figure}[ht!!]
		\centering
		\begin{subfigure}{\linewidth}
		\centering
		\includegraphics[keepaspectratio,height = .4\textheight]{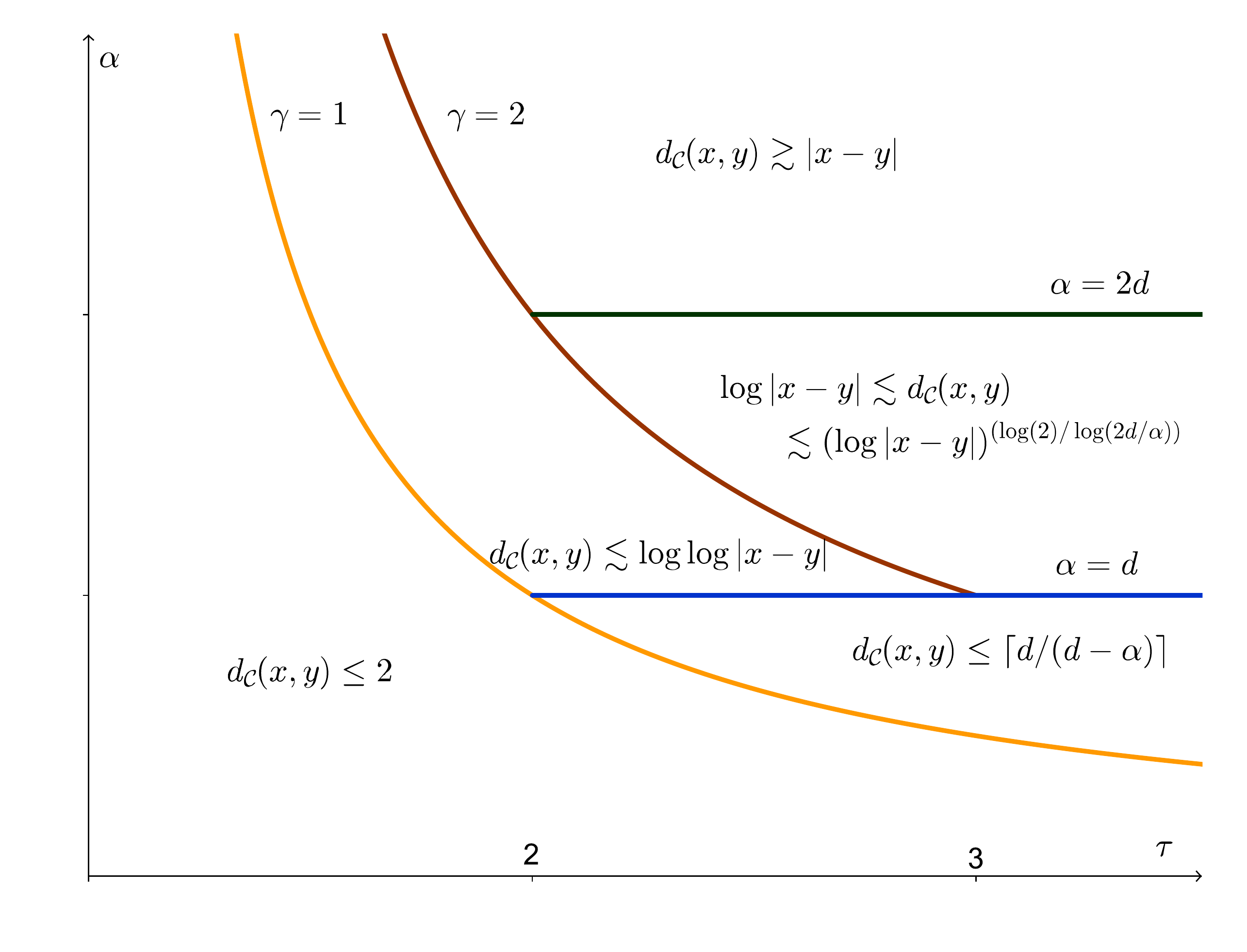}
		\caption{Overview of graph distances, combined results of Theorem \ref{thmGraphDiameter}, \cite{DeijfenScaleFree} and \cite{DeprezInhomogeneous}. 
		By the notation $d_{\Ccal}(x,y)\lesssim f(x,y)$ we mean that there exists a constant $c>0$, such that ${\lim_{|x-y|\rightarrow\infty}\Prob(d_{\Ccal}(x,y)\leq c f(x,y))=1}$. For $\gamma\in (1,2)$ and $\alpha>d$ stronger bounds have been proved~\cite[Theorem 5.1, 5.3]{DeijfenScaleFree}.
		 }\label{phasesDistances}
		\end{subfigure}
		\begin{subfigure}{\linewidth}
		\centering
		\includegraphics[keepaspectratio,height = .4\textheight]{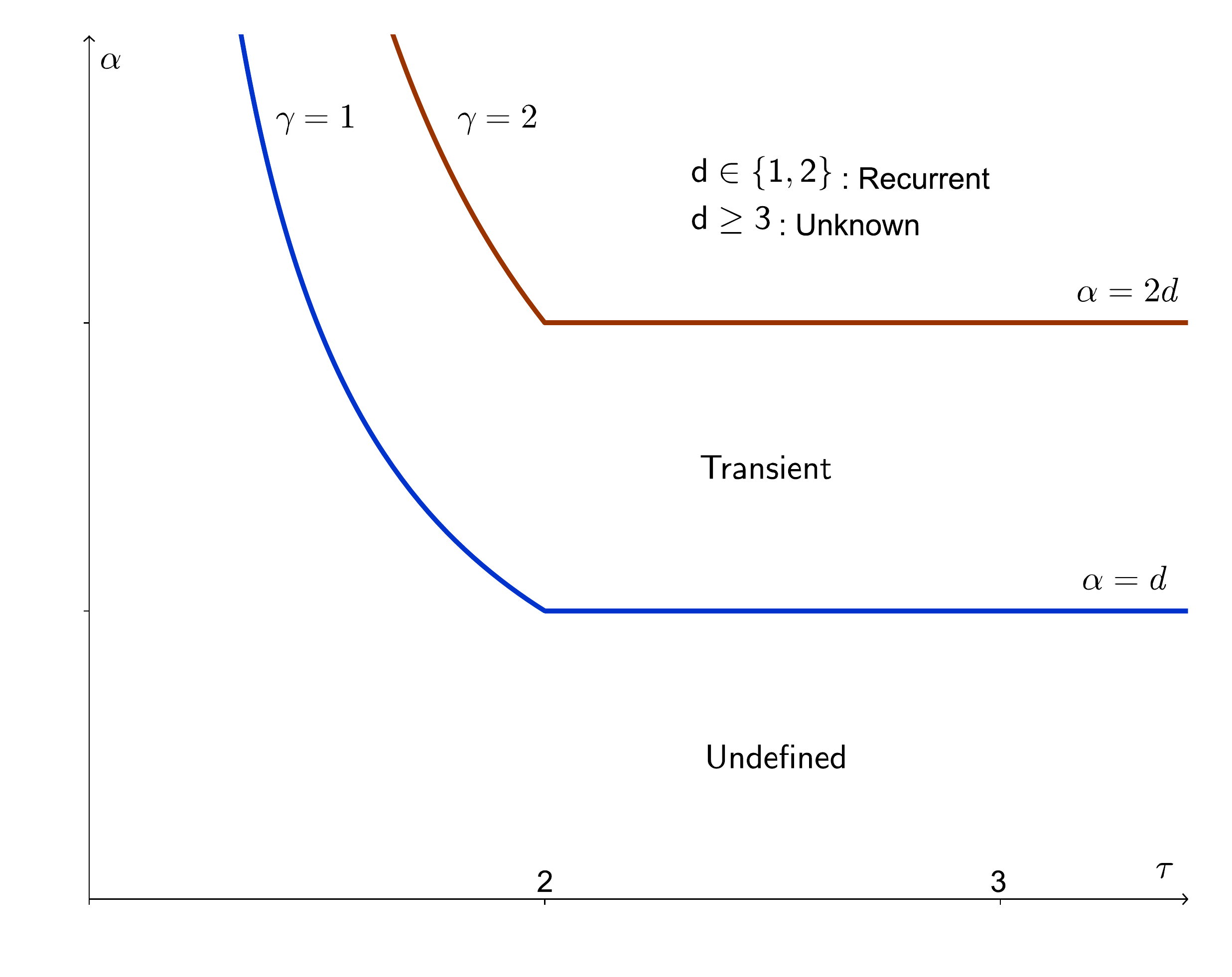}
		\caption{Recurrent vs.\ transient. Results of Theorem \ref{transientRandomThm} and \ref{recurrentRandomWalk2D}}\label{phasesRandomWalk}
		\end{subfigure}
		\caption{Phase diagrams. Transitions in $\gamma$ and $\alpha$. }
	\end{figure}
	\begin{theorem}[Transience in $d\geq 1$]\label{transientRandomThm} 
Consider SFP on $\Zd$ with $d \ge 1$, i.i.d.\ vertex weights whose law $W$ satisfies \eqref{lowerBoundWeights}, either $1 < \gamma < 2$ or $d < \alpha < 2d$, or both, and $\lambda > \lambda_c(d, \alpha, W)$.
Then the infinite cluster of SFP is transient almost surely.
\end{theorem}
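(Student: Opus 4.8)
The plan is to establish transience by exhibiting a subgraph of the infinite cluster that is known to be transient, using the standard monotonicity principle: if a graph $G'$ is a subgraph of $G$ and $G'$ is transient, then $G$ is transient (equivalently, via Rayleigh's monotonicity law, adding edges can only decrease effective resistance). So it suffices to find, inside $\Ccal$, a transient structure whose transience we can verify directly by an electrical-network or flow argument.

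First I would handle the two hypothesized regimes, since the theorem allows $1<\gamma<2$, or $d<\alpha<2d$, or both. In each regime the strategy is to compare SFP with long-range percolation. Using the domination noted in the introduction---that under \eqref{lowerBoundWeights} the weights are bounded away from $0$, so an open edge in SFP stochastically dominates an open edge in LRP with some parameter $\lambda'$---the supercritical SFP cluster contains (after conditioning and a suitable coupling) a supercritical LRP cluster with $d<\alpha<2d$. For long-range percolation in this exponent range, transience of the infinite cluster in any dimension is known (the effective long edges make the graph ``richer'' than $\Zd$), so in the regime $d<\alpha<2d$ I would invoke the LRP comparison directly. The regime $1<\gamma<2$ is the genuinely new part: here $\gamma<2$ forces infinite-variance degrees and an abundance of high-weight vertices, so I would build the transient subgraph from the weight structure rather than from the geometry alone.

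For the infinite-variance regime the concrete approach is a \emph{renormalization / hierarchical coarse-graining}. I would partition $\Zd$ into a nested sequence of dyadic boxes $Q_0 \subset Q_1 \subset \cdots$ with side lengths growing geometrically, and on each scale identify ``good'' boxes that contain a vertex of suitably large weight (the power-law tail with exponent $\tau-1$ guarantees such high-weight vertices are common enough). Because $\gamma=\alpha(\tau-1)/d<2$, a high-weight vertex in box $Q_k$ connects to a high-weight vertex in the next box $Q_{k+1}$ with probability bounded below, even across the growing spatial gap: the connection probability $1-\exp(-\lambda W_xW_y|x-y|^{-\alpha})$ stays bounded away from zero precisely because the weights scale up fast enough to beat the distance penalty when $\gamma<2$. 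This lets me embed a transient tree-like or ``graph-of-graphs'' structure---effectively the kind of hierarchy announced in the abstract---into $\Ccal$. I would then put unit resistances on this subgraph and bound the effective resistance from the origin to infinity by summing the resistances across scales; the geometric growth of box sizes against the bounded-below crossing probabilities yields a convergent (hence finite) effective resistance, which gives transience by the Nash-Williams / flow criterion.

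The main obstacle I expect is the \emph{long-range regime $d<\alpha<2d$ when $\gamma \ge 2$}, i.e., where we cannot lean on infinite-variance weights and must extract transience purely from the long edges. Here the difficulty is twofold: first, carefully constructing the coupling so that the dominated LRP configuration is genuinely supercritical (controlling the weight truncation and the loss of independence introduced by conditioning on the weights); and second, supplying a self-contained transience proof for supercritical LRP with $d<\alpha<2d$ if a citable statement is not directly available in the needed generality. The cleanest route is probably a unified flow construction: define a hierarchical renormalized lattice on which, in \emph{both} regimes, good boxes connect across scales with probability bounded away from zero (in one regime via high weights, in the other via the long edges of LRP), show the good boxes percolate and contain the origin's cluster, and then run a single effective-resistance computation on the renormalized structure. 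Making the ``good box'' event have probability tending to $1$ fast enough that a Borel--Cantelli / multi-scale argument guarantees an infinite connected backbone---while keeping the crossing estimates uniform across scales---is the technical crux.
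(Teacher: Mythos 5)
Your overall strategy---finding a transient subgraph via Rayleigh monotonicity, splitting into the regime $1<\gamma<2$ (driven by heavy-tailed weights) and the regime $d<\alpha<2d$ (driven by long edges, via comparison with long-range percolation)---is exactly the paper's strategy. However, there is a genuine gap in the core construction for the regime $1<\gamma<2$. You propose a \emph{single} nested sequence of boxes $Q_0\subset Q_1\subset\cdots$ with one high-weight vertex per scale, connected across consecutive scales with probability bounded below. Even if all these connections are present, the resulting subgraph is essentially an infinite ray (one edge per scale in series), whose effective resistance to infinity is a divergent sum; such a structure is recurrent, not transient. The missing idea is that each scale-$n$ box must contain \emph{many} good scale-$(n-1)$ boxes---say $C_n$ of them with $\sum_n C_n^{-1}<\infty$---whose dominant high-weight vertices are densely interconnected (in the paper, the $(n-2)$-dominant vertices inside each good $(n-1)$-stage box form a clique). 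This produces the ``renormalized graph'' of Lemma \ref{renormalizedTransient} (Berger), in which the parallel connections at scale $n$ reduce the resistance contribution of that scale to order $C_n^{-1}$, and only then does the sum over scales converge. Relatedly, for the multiscale induction to close you need the probability that a box is good to tend to $1$ summably fast in the scale (the paper proves $\P(L_n^c)\le (n+1)^{-3/2}$ recursively), not merely to be bounded away from zero.

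Two further points. First, your construction as sketched only works for large $\lambda$; since $\lambda_c=0$ when $\gamma<2$, you must also treat small $\lambda$. The paper does this by a coarse-graining step you do not describe: partition $\Zd$ into $N$-boxes, note that for $\gamma<2$ the maximum weight in an $N$-box exceeds $\beta N^{\alpha/2}$ with probability close to $1$ (Lemma \ref{transientLemmaBigDegrees}), and check that the induced model on these dominant vertices dominates SFP with effective parameter $\lambda\beta^2(3d)^{-\alpha}$, which can be made as large as desired (Lemma \ref{transientLemmaConnectivity}); the site-percolation marking of ``good'' boxes is why Proposition \ref{lemmaTransientLargeLambda} is stated for a site-diluted model. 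Second, your concern about the regime $d<\alpha<2d$ with $\gamma\ge2$ is well placed but resolvable by citation: the paper invokes Berger's site-bond transience result for LRP together with a cluster-size estimate of Deprez et al.\ to run the same coarse-graining, rather than re-deriving LRP transience from scratch. With the clique/parallel-path structure and the $\sum C_n^{-1}<\infty$ criterion added, your outline matches the paper's proof.
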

Recall P\'olya's theorem, which states that the lattice of $\Zd$ with nearest neighbour edges is recurrent if and only if $d\in\{1,2\}$, and transient otherwise. Therefore, transience in these dimensions shows a dramatic difference to regular lattices. Berger \cite{BergerTransience} proved for LRP that the random walk is transient in one or two dimensions if and only if $\alpha\in(d,2d)$. For SFP the result is stronger: for any $\alpha>d$, there exists $\tau>1$ such that the infinite cluster is transient.
	\begin{theorem}[Recurrence in two dimensions]\label{recurrentRandomWalk2D} 
	Consider SFP on $\Zd$ with $d =2$, i.i.d.\ vertex weights whose law $W$ satisfies
\begin{equation}\label{upperBoundWeights}
	\Prob(W\geq w)\leq cw^{-(\tau-1)}, \qquad \text{for all }w\geq 0,
\end{equation}
for some $\tau >1$ and $c >0$, $\alpha > 4$, and $\lambda > \lambda_c(2, \alpha, W)$, and such that either $\tau >2$ or $\gamma >2$, or both. Then the infinite percolation cluster is recurrent $\Prob_{(\lambda, W)}$-almost surely.
\end{theorem}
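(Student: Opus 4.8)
The plan is to prove recurrence through the theory of electrical networks. Viewing the infinite cluster $\Ccal$ as a network of unit resistors, recurrence of simple random walk is equivalent to the effective resistance $R_{\mathrm{eff}}(0\leftrightarrow\infty)$ from the origin to infinity being infinite (on the event $0\in\Ccal$; by connectedness and translation invariance this characterizes recurrence of $\Ccal$). Since only the component of the origin contributes to this resistance, it suffices to lower-bound the effective resistance in the full open network on $\mathbb Z^2$. To this end I would use a \emph{weighted Nash--Williams inequality}: for the nested cutsets $\Pi_n=\{\text{open edges with exactly one endpoint in }B_n\}$, where $B_n:=[-n,n]^2\cap\mathbb Z^2$, and any weights $\theta_n\ge 0$, applying Cauchy--Schwarz to an arbitrary unit flow yields
\begin{equation*}
R_{\mathrm{eff}}(0\leftrightarrow\partial B_N)\ \ge\ \frac{\big(\sum_{n<N}\theta_n\big)^2}{\sum_{e}\big(\sum_{n<N:\,e\in\Pi_n}\theta_n\big)^2},
\end{equation*}
the edge-sum running over open edges in $B_N$. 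Taking $\theta_n=1/n$ makes the numerator $\big(\sum_{n<N}1/n\big)^2\sim(\log N)^2$. Writing $a_e=\sum_{n:\,e\in\Pi_n}\theta_n$ (so $a_e$ is comparable to the logarithm of the ratio of the $\ell^\infty$-radii of the endpoints of $e$), the whole argument reduces to showing that the random denominator $D_N:=\sum_e a_e^2\,\indi[e\text{ open}]$ satisfies $D_N=o\big((\log N)^2\big)$ almost surely along a subsequence, since $R_{\mathrm{eff}}(0\leftrightarrow\partial B_N)$ increases to $R_{\mathrm{eff}}(0\leftrightarrow\infty)$.

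The heart of the matter is the first-moment bound $\E[D_N]=\sum_e a_e^2\,\E[p_{xy}]\lesssim\log N$, and this is where the hypotheses enter. The key input is the decay of the annealed connection probability: using $p_{xy}\le\min\{1,\lambda W_xW_y|x-y|^{-\alpha}\}$ together with the weight tail bound \eqref{upperBoundWeights}, I would establish
\begin{equation*}
\E[p_{xy}]\ \lesssim\ |x-y|^{-\beta}\,\log(2+|x-y|),\qquad \beta:=\min\{\alpha,\ \alpha(\tau-1)\}.
\end{equation*}
When $\tau>2$ the weights have finite mean and $\E[p_{xy}]\lesssim(\E W)^2|x-y|^{-\alpha}$, so $\beta=\alpha$; when $\gamma>2$ but the mean is infinite, integrating the power-law tail of the product $W_xW_y$ (regularly varying of index $\tau-1$, with a logarithmic correction) gives the slower decay $|x-y|^{-\alpha(\tau-1)}\log|x-y|$, so $\beta=\alpha(\tau-1)=\gamma d$. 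In either case the hypotheses ``$\tau>2$ or $\gamma>2$'' together with $\alpha>4$ force $\beta>4=2d$ strictly. Feeding this into $\E[D_N]$ and splitting each vertex-sum into short edges (length at most the distance of $x$ to the origin) and long edges, the short edges at radius $m$ contribute $\sim m\cdot m^{-2}=m^{-1}$ and the long edges a strictly smaller amount because $\beta>2d$, so $\E[D_N]\lesssim\sum_{m\le N}m^{-1}\sim\log N$; the logarithmic corrections coming from the weights are absorbed precisely because $\beta>2d$ holds with room to spare. One must also check that shorting the exterior of $B_N$ does not create a non-negligible contribution from edges leaving $B_N$, but their total weighted count is $o(1)$ for $\beta>4$.

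Finally I would upgrade the first-moment estimate to an almost-sure one. By Markov's inequality $\Prob\big(D_N>(\log N)^{3/2}\big)\lesssim(\log N)^{-1/2}$, which is summable along a rapidly growing subsequence such as $N_k=2^{2^k}$. Borel--Cantelli then gives $D_{N_k}\le(\log N_k)^{3/2}$ eventually, whence $R_{\mathrm{eff}}(0\leftrightarrow\partial B_{N_k})\gtrsim(\log N_k)^{1/2}\to\infty$, and by monotonicity $R_{\mathrm{eff}}(0\leftrightarrow\infty)=\infty$, proving recurrence. I expect the main obstacle to be the annealed edge-probability estimate in the infinite-mean regime $\tau\le 2$: one must integrate the tail of the product $W_xW_y$ carefully, control the resulting logarithmic correction, and verify that the critical exponent for recurrence is exactly $\alpha(\tau-1)=\gamma d$, so that it is the condition $\gamma>2$ (rather than $\tau>2$) that makes the denominator sum converge. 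Everything else is the standard electrical-network machinery adapted from the long-range percolation threshold $\alpha>2d$.
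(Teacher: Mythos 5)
Your proposal is correct, and the probabilistic core is the same as the paper's: in both arguments everything reduces to the annealed two-point bound $\E[1-\mathrm{e}^{-\lambda W_xW_y|x-y|^{-\alpha}}]\le\E[(\lambda W_xW_y|x-y|^{-\alpha})\wedge 1]$, estimated by $(\E W)^2|x-y|^{-\alpha}$ when $\tau>2$ and by $C\log(|x-y|)\,|x-y|^{-\alpha(\tau-1)}$ when $\tau\le 2$ using the regularly varying tail of the product $W_xW_y$; this is exactly Lemma \ref{upperboundProductWeights}, with the same case split and the same role for the hypotheses $\alpha>4$ and ``$\tau>2$ or $\gamma>2$''. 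Where you differ is in the recurrence criterion itself: the paper invokes Berger's Theorem 3.10 (Lemma \ref{lemma2Drecurrent}) as a black box, which only requires checking that the marginal edge probabilities are $O((i+j)^{-4})$, whereas you re-derive an equivalent criterion from scratch via a weighted Nash--Williams/cutset inequality with weights $\theta_n=1/n$, a first-moment bound $\E[D_N]=O(\log N)$ on the cutset denominator, and Markov plus Borel--Cantelli along a doubly exponential subsequence. Your version is self-contained and makes transparent why $2d=4$ is the threshold, at the cost of some extra bookkeeping (the edges crossing many annuli, the shorting of the exterior of $B_N$, and the fact that your sums need the \emph{strict} inequality $\min\{\alpha,\alpha(\tau-1)\}>4$, which the theorem's hypotheses do supply, whereas Berger's criterion is stated for $\alpha\ge 4$); the paper's version is shorter but leans on the cited result. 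Both are valid proofs of the theorem.
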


	Note that, as mentioned before, in dimension 1 when $\gamma>2$ and $\alpha>2$ there is no infinite cluster almost surely \cite{DeijfenScaleFree}, so in this case a random walk is trivially recurrent. We therefore give a full characterization of recurrence and transience of SFP in dimension one and two, while for $d\geq3$ we only characterize it when $\alpha<2d$ or $\gamma<2$.
For nearest-neighbour percolation it is known that the infinite cluster is transient \cite{KestenRandomWalk}. 
It would be interesting  to verify whether this is true for other percolation models on $\Zd$, in particular for scale-free percolation or long-range percolation. 

\subsection*{Geometric clustering and hierarchies}	
	We show that SFP has the geometric clustering property not only for $\alpha\in(d,2d)$ as shown by Deprez \emph{et al.}\ \cite[Theorem 6]{DeprezInhomogeneous}, but also when $1< \gamma<2$. Moreover, these clusters can be organized in a hierarchical structure, a phenomenon that is also present in some real-life networks (see for example \cite[Chapter 13]{carrington2005models} or \cite[Chapter 9]{barabasi}). These hierarchical structures are not only present in finite boxes, they extend throughout $\Zd$. Indeed, the infinite component of SFP contains an infinite subgraph exhibiting a prescribed hierarchy. We introduce the notion of a \emph{hierarchically clustered tree}.
	\begin{definition}[Hierarchically clustered trees]\label{def:hct}
	Fix $m\geq 1$ and $x\in\Zd$. Let $\mathcal{Q}_m(x):=x+[0, m-1]^d\cap \Zd$. Consider the set of trees $\mathcal{T}_{x,m}$ of all unrooted, connected, cycle-free subgraphs of $(\mathcal{Q}_m(x), \mathcal{Q}_m(x) \times \mathcal{Q}_m(x))$ (i.e., trees on $\mathcal{Q}_m(x)$), where each vertex $v$ in such a tree is endowed with a weight $W_v \in \mathbb{R}$.
Fix  $\rho\in(0,1]$ and $K>0$. We call an element $T \in \mathcal{T}_{x,1}$ an \emph{$(x,1,K,\rho)$-hierarchically clustered tree} if $T = (\{x\}, \varnothing, \{W_x\})$ (i.e., $T$ is the isolated vertex $x$ with a weight). For $m \ge 2$, we call an element $T \in \mathcal{T}_{x,m}$ an \emph{$(x,m,\rho,K)$-hierarchically clustered tree} if
the following four properties hold:
\begin{enumerate}
\item\emph{[Positive density]} $T$ contains at least a fraction $\rho$ of all the vertices in the box $\mathcal{Q}_m(x)$:
\[
|V|>\rho m^d.
\]
\item\emph{[Ultra-small world]} $T$ is an ultra-small world in the sense that
\[
\mathrm{diam}\left(T\right)\leq K \;\max\{1,  \log\log m\}.
\]
\item\emph{[Ordered weights]} If we root $T$ at its maximum-weight vertex, then, for any vertex in the tree, the weights decrease step-by-step along the path from the root to that vertex.
\item\emph{[Spatial clustering]} If we remove any given edge from $T$, then there exists an $m'\le m$ (depending on $T$ and the removed edge) such that the two trees $T'_1=(V'_1, E'_1, W'_1)$ and $T'_2=(V'_2, E'_2, W'_2)$ that remain satisfy
\begin{enumerate}
\item at least one (say $T'_1$) is an $(x', m',\rho,K)$-hierarchically clustered tree for some $x'\in \mathcal{Q}_m(x)$, and 
\item the other (say $T'_2$) has its vertex set $V'_2$ disjoint with the box on which $T'_1$ is defined:
\[
\mathcal{Q}_{m'}(x')\cap V'_2 = \varnothing.
\]
\end{enumerate}
\end{enumerate}	
\end{definition}	
Note that condition $(1)$ together with condition $(2)$ implies that there exists $K'>0$, such that for all $m\geq 1$
\[
\mathrm{diam}\left(T\right) <K'\log\log\left|V_{T}\right|,
\]
so hierarchically clustered trees combine a topological and a spatial version of the ultra-small world property. 	
	\begin{theorem}[Hierarchically clustered trees]\label{thm:trees}
Consider SFP on $\Zd$ with $d \ge 1$, i.i.d.\ vertex weights whose law $W$ satisfies \eqref{lowerBoundWeights}, with $1<\gamma<2$, and any $\lambda >0$. 	
Let $\mathcal{S}_m$ denote the SFP configuration \emph{inside} the cube $[0,m-1]^d$. 
There exist $\xi >0$, a density $0 < \rho \le 1, K>0$ and a constant $m_0>0$, such that 
\begin{enumerate}
\item for all $m\geq m_0$,
\[
 	\Prob(\mathcal{S}_m\text{ contains a }(0, m, \rho,K)\text{-hierarchically clustered tree})\geq 1-\exp(-\rho m^\xi), \text{ and}
\]
\item the infinite component $\mathcal{C}_\infty$ contains a.s.\ an infinite, connected, cycle-free subgraph $\mathcal{T}_\infty$ such that if we remove any given edge from $\mathcal{T}_\infty$, a finite and infinite connected component remain and there exist $x\in\Zd$ and $m\geq 1$ such that the finite connected component is an $(x, m, \rho,K)$-hierarchically clustered tree.
\end{enumerate}
\end{theorem}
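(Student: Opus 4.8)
The plan is to establish part (1) by an explicit recursive construction on a nested family of boxes whose side lengths grow doubly exponentially, and then to obtain part (2) from part (1) by a Borel--Cantelli argument along a sequence of scales. Fix $c>1$ with $\gamma<1+1/c$ (possible since $\gamma<2$) and an exponent $a$ with $\alpha/(1+1/c)<a<d/(\tau-1)$; this interval is nonempty precisely because $\gamma<1+1/c$. Cover $[0,m-1]^d$ by a laminar family of boxes whose level-$\ell$ side lengths satisfy $m_1=O(1)$ and $m_{\ell+1}\approx m_\ell^{c}$, so that the top box has side $m_L=m$ with $L=O(\log\log m)$ levels. In each level-$\ell$ box declare its maximum-weight vertex a \emph{hub} if its weight exceeds the threshold $w_\ell:=m_\ell^{a}$. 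Build the tree from the top down: the global hub is the root, and inside each box we join the hub of every sub-box to the box's hub through the SFP edge between them, but only when that edge is present and the sub-box already carries a \emph{good} subtree (one meeting the density and diameter bounds below); finally each ordinary vertex is joined to its level-$1$ hub whenever that edge is present. When a box's hub happens to lie in one of its sub-boxes we identify the two and attach the remaining sub-boxes directly. The result is automatically a weight-decreasing tree.

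Three of the four defining properties are then structural. The ordered-weights property (3) is immediate, since every parent is the maximum-weight vertex of a box strictly containing its child's box, so weights do not increase from the root. The ultra-small-world property (2) is essentially deterministic: any root-to-leaf path crosses at most one edge per level, so the tree has depth $\le L$ and diameter $\le 2L\le K\max\{1,\log\log m\}$ for a suitable $K$, and the identical bound applies verbatim to the subtree carried by any sub-box of side $s$, giving depth $O(\log\log s)$. The spatial-clustering property (4) holds by construction: deleting a leaf edge isolates a single-vertex hierarchically clustered tree; deleting the edge from a sub-box hub to its parent isolates exactly the subtree supported on that sub-box, which we kept only when it was itself a good hierarchically clustered tree, while every vertex of that sub-box reaches the root through the deleted edge, so the complementary tree is disjoint from the sub-box as required.

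The real content is the density property (1) together with the stretched-exponential bound, and this is where $\gamma<2$ enters quantitatively. A sub-box hub (weight $\ge w_{\ell-1}$) is joined to its parent hub (weight $\ge w_\ell$, at distance $\le m_\ell$) with probability at least $1-\exp(-\lambda\,m_\ell^{\,a(1+1/c)-\alpha})$, whose exponent is positive by the choice of $a$ and $c$; similarly, by the weight lower bound \eqref{lowerBoundWeights}, a level-$\ell$ box contains a hub with probability at least $1-\exp(-c\,m_\ell^{\,d-a(\tau-1)})$, again with positive exponent. Both failure probabilities therefore decay doubly exponentially in $\ell$, so they are summable over the levels and only the bottom $O(1)$ levels—whose boxes have constant side—are genuinely lossy; there a leaf attaches to its hub with merely a constant probability $p_0>0$, using that \eqref{lowerBoundWeights} keeps the weights bounded away from $0$. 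Consequently each ordinary vertex belongs to the tree with probability at least a constant $\rho_0>0$, namely the product of the $O(1)$ constant survival probabilities of the lossy bottom levels and the convergent product $\prod_{\ell}(1-\varepsilon_\ell)>0$ over the reliable upper levels, which yields expected size $\ge\rho_0\,m^d$.

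The main obstacle is upgrading this first-moment estimate to the concentration bound $1-\exp(-\rho m^\xi)$ in the presence of the dependencies created by shared hubs and nested boxes, and keeping the density $\rho$ from degrading through the $O(\log\log m)$ scales. I would first condition on all the weights, which localizes the remaining randomness to independent edge-indicators; handle the macroscopic backbone formed by the top levels (boxes of side a small power of $m$) by a union bound, each failing with probability at most $\exp(-m^{\xi})$; and then exploit that, below the backbone, the fill-in of distinct boxes uses disjoint edge-indicators, so the total vertex count is a sum of independent contributions to which a Hoeffding/Azuma bound applies, with $\xi$ chosen below the slowest of the two doubly-exponential rates. The buffer $\rho_0>\rho$ and the convergence of $\prod(1-\varepsilon_\ell)$ are exactly what prevent the density from collapsing across scales. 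For part (2), since $\gamma<2$ gives $\lambda_c=0$, any $\lambda>0$ percolates; I would run the same construction on an infinite nested hierarchy of boxes of doubly-exponentially increasing side and use Borel--Cantelli—the per-level failure probabilities being summable—to produce almost surely an infinite ray of hubs $h_{\ell_1},h_{\ell_1+1},\dots$ all of whose connecting edges $\{h_k,h_{k+1}\}$ are present from some level $\ell_1$ on. Attaching to this backbone every finite good subtree that connects to it yields an infinite, one-ended tree $\mathcal{T}_\infty\subset\mathcal{C}_\infty$: deleting any edge leaves the infinite upward ray on one side and, on the other, a finite subtree that is a hierarchically clustered tree by property (4), as required.
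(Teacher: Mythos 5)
Your proposal is correct in its essentials and follows the same skeleton as the paper's proof -- a multiscale renormalization with doubly-exponentially growing boxes (your $m_{\ell+1}\approx m_\ell^{c}$ with $1<c<1/(\gamma-1)$ is exactly the paper's $D_n=\lceil D_{n-1}^{\zeta}\rceil$ with $\zeta\in(1,1/(\gamma-1))$ after sending the auxiliary exponent $\xi'$ to $0$), maximum-weight ``hubs'' with polynomial weight thresholds playing the role of the paper's $n$-dominant vertices, recursive goodness of boxes, Chernoff concentration for the number of good sub-boxes, and a convergent product of per-scale retention fractions giving the density $\rho$. Where you genuinely diverge is in the reduction steps surrounding this core. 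The paper first proves the result only for \emph{large} $\lambda$ on a site-percolated model (Proposition \ref{lemLargestCluster}), then extends from a special sequence $\{m_n\}$ to all $m$ (Lemma \ref{Lemma:Density}), and finally recovers arbitrary $\lambda>0$ by coarse-graining onto $N$-boxes via Lemmas \ref{transientLemmaBigDegrees} and \ref{transientLemmaConnectivity}; the site percolation exists precisely to absorb the $N$-boxes that fail to contain a heavy vertex. You instead observe that for \emph{any} $\lambda>0$ the hub-to-hub connection probabilities $1-\exp(-\lambda m_\ell^{a(1+1/c)-\alpha})$ tend to $1$ up the hierarchy regardless of $\lambda$, so only the $O(1)$ bottom levels are lossy and their constant retention probabilities can simply be folded into $\rho$; and you define the scales top-down from $m$, which dispenses with the sequence-to-all-$m$ interpolation. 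Both simplifications are legitimate and arguably cleaner (they eliminate the auxiliary site-percolation model entirely); the price is that your constants $\rho$, $K$ must be checked to be uniform in $m$ because the scale ladder now depends on $m$, which works since $m_1$ stays in a bounded window. Your Borel--Cantelli argument for part (2) (a.s.\ all levels are good from some random level on, giving the one-ended backbone of hubs) is also a mild simplification of the paper's route via $\prod_n\Prob(L_n)>0$ using positive correlation. The one place your write-up is thinnest is the passage from the first-moment density to the bound $1-\exp(-\rho m^{\xi})$ simultaneously at every scale, which is the technical heart of the paper (the careful choice of the fractions $a_n$ in \eqref{eqDefAn} and the verification that $\prod a_n>0$); your plan -- condition on weights, use independence of disjoint boxes, Chernoff at each level with a slack between the achieved and the demanded fraction of good sub-boxes -- is exactly what the paper executes, so this is a matter of writing out details rather than a gap.
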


	\begin{figure}[t!]
	\centering	
		\includegraphics[width = 0.8\linewidth, height=0.2\textheight]{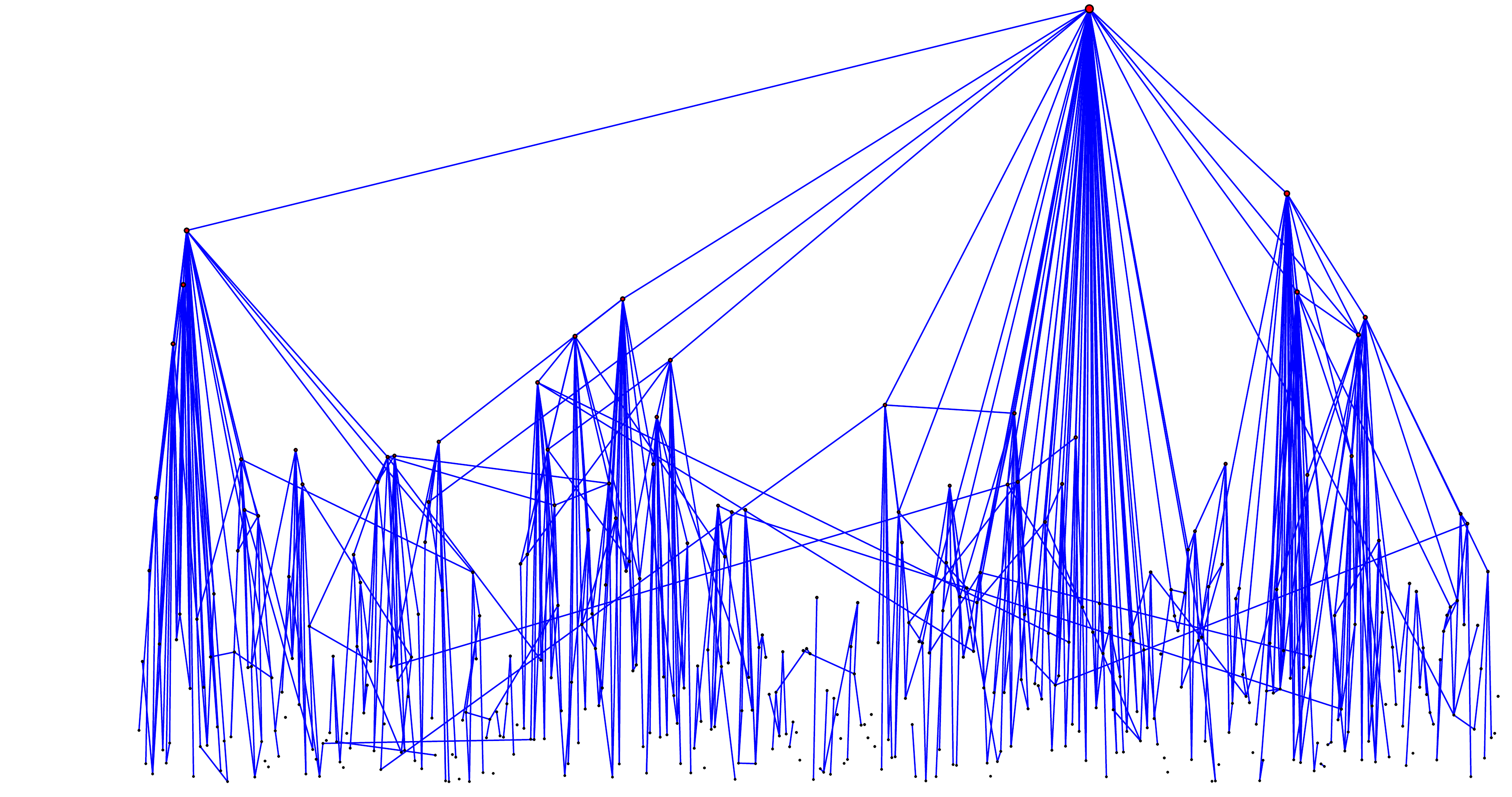}
		\caption{A simulation of scale-free percolation in $d=1$. The vertex-height in the figure depends on the weight (logarithmically). \mbox{$\alpha=2, \tau=1.95, \lambda = 0.1$}. }\label{FigureHierarchical}
	\end{figure}

\subsection*{Related results and open questions}\label{SectionOpenQuestions}
	\subsubsection*{Graph distance}
		This paper combined with \cite{DeijfenScaleFree, DeprezInhomogeneous} gives bounds on the graph distance for every value in the parameter space, but the picture is not yet complete. We were not able to prove a non-trivial lower bound on the diameter in the regime where $\gamma<2$ and $d > \alpha$, and it is not clear to us that the upper bound is sharp. And in the regime $\alpha\in(d,2d)$ where $\gamma>2$, there is a gap in the bounds on graph distances, since there the best known bounds are \cite{DeijfenScaleFree,DeprezInhomogeneous}:
		\[
		\lim_{|x|\rightarrow \infty} \Prob\left(c\log|x|\leq d_\mathcal{C}(0,x)\leq c^{-1}(\log|x|)^{\log(2)/\log(2d/\alpha)}\right)=1, \qquad \text{ for some } c>0.
		\] 		
		What is the right asymptotics of $d_\mathcal{C}(0,x)$ in this regime?

	\subsubsection*{Hierarchical structure}
	In Section \ref{SectionClusters} below we determine that the bound on $\xi$ in Theorem \ref{thm:trees} is ${\xi<\min \{d(2-\gamma)/(\tau+1),\tfrac{d}{2}(\tau+2-\sqrt{(\tau+2)^2-4(2-\gamma)})\}}$.
		Biskup \cite{biskup2004} shows a result rather similar to Theorem \ref{thm:trees} on the clustering density for long-range percolation when $\alpha\in(d,2d)$, where $\xi <d(2-\alpha)$. The corresponding range for $\xi$ for scale-free percolation would be $\xi <d(2-\gamma)$. It might be possible to extend Theorem \ref{thm:trees} to hold for this regime of $\xi$.

	\subsubsection*{Scale-free percolation on the torus.}	
	Scale-free percolation is defined as a model on the infinite lattice $\Zd$. 
	A challenging question is the study of scale-free percolation, and in particular its critical behaviour, on the finite torus. Working on the torus keeps the translation invariance and provides the opportunity to compare the model to its non-spatial counterparts, such as the Norros-Reittu random graph \cite{norros2006}. 
	
	Scale free percolation on finite boxes is strongly related to geometric variants of the Norros-Reittu model or the Chung-Lu model. 
	For example, Bringmann, Keusch, and Lengler \cite{BringmannGIRG} introduce \emph{geometric inhomogenous random graphs}, which generalise a certain class of hyperbolic random graphs, and which could be described as ``continuous SFP on the torus''. Indeed, they do not use a grid, but place the points randomly. In a fairly general setup, where, contrary to our model, the connection probability does not need to approach to 1 as $W_xW_y/|x-y|^\alpha$ goes to infinity, these authors prove that such graphs are ultra-small \cite{BringmannDistances}. Moreover, they claim that their results also carry over to finite boxes. Because of this more general setup, it would be interesting (but possibly not straightforward) to see whether in their setting hierarchically clustered trees are also present.

\subsection*{Organization}
The proofs of the main results partly rely on a number of elementary properties of the vertex weights. We begin by proving these properties in Section~\ref{SectionPreliminary}. In Section~\ref{SectionDistance} we prove the boundedness of the graph distance for $\alpha< d$ and $\gamma\leq 1$. In Section~\ref{SectionRW} we prove the random walk results, and in Section~\ref{SectionClusters} we prove Theorem \ref{thm:trees} 
on hierarchical clustering. 

\section{Preliminaries: properties of the vertex weights}\label{SectionPreliminary}
We start by introducing some basic notation and definitions. 
Given two percolation configurations $\omega, \omega' \in \{0,1\}^{\Zd \times \Zd}$, we write $\omega' \succcurlyeq \omega$ if $\omega'(e) =1$ when $\omega(e)=1$ for all $e \in \Zd \times \Zd$, i.e., all edges that are open in $\omega$ are also open in $\omega'$. We say that an event $A$ is \emph{increasing} if $\omega \in A$ implies $\omega' \in A$ for all $\omega' \succcurlyeq \omega$.

Given two random variables $X$ and $Y$, we say that $Y$ \emph{stochastically dominates} $X$ if for every $x \in \mathbb{R}$ the inequality $\P(X > x) \le \P(Y >x)$ holds, and we write $X \preceq_d Y$.

\begin{lemma}[Stochastic domination for SFP]\label{obs:increasing} Let $W$ and $W'$ be random variables such that $W' \preceq_d W$. For any increasing event $A$,
\begin{equation}\label{e:coupling}
	\P_{(\lambda,W)}(A) \ge \P_{(\lambda,W')}(A).
\end{equation}
\end{lemma}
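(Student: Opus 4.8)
The plan is to establish the inequality via an explicit monotone coupling of the two SFP models. Since $W' \preceq_d W$, the standard coupling of real-valued random variables gives us, for each vertex $x \in \Zd$, a pair $(W'_x, W_x)$ on a common probability space with $W'_x \le W_x$ almost surely and with the correct marginal laws; taking these i.i.d.\ over $x$ yields coupled weight families $(W'_x)_{x}$ and $(W_x)_{x}$ satisfying $W'_x \le W_x$ simultaneously for all $x$. The key observation is that the connection probability $p_{xy} = 1 - \exp(-\lambda W_x W_y / |x-y|^\alpha)$ is monotone increasing in each of $W_x$ and $W_y$, so pointwise domination of the weights translates into pointwise domination of the edge-retention probabilities.

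First I would set up the edge randomness so that it respects this monotonicity. The natural device is to attach to each potential edge $e = \{x,y\}$ an independent uniform random variable $U_e \in [0,1]$, and declare $e$ open in the $W$-configuration iff $U_e \le p_{xy}(W)$ and open in the $W'$-configuration iff $U_e \le p_{xy}(W')$. Conditionally on the weights these produce the correct Bernoulli edge variables with the required independence across edges, so the two marginals are exactly $\P_{(\lambda,W)}$ and $\P_{(\lambda,W')}$. Because $W'_x \le W_x$ and $W'_y \le W_y$ force $p_{xy}(W') \le p_{xy}(W)$, every edge open in $\omega'$ (the $W'$-configuration) is also open in $\omega$ (the $W$-configuration), i.e.\ $\omega \succcurlyeq \omega'$ almost surely on the coupling space.

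The conclusion then follows directly from the definition of an increasing event. On the coupling, $\omega' \in A$ together with $\omega \succcurlyeq \omega'$ gives $\omega \in A$, so $\indi\{\omega' \in A\} \le \indi\{\omega \in A\}$ pointwise; taking expectations under the coupling measure and using that the marginals reproduce the two SFP laws yields
\[
\P_{(\lambda,W')}(A) = \E\big[\indi\{\omega' \in A\}\big] \le \E\big[\indi\{\omega \in A\}\big] = \P_{(\lambda,W)}(A),
\]
which is exactly \eqref{e:coupling}.

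The only genuinely delicate point is making the coupling rigorous over the infinite product of edges and weights: one must check that the joint law of $(\omega, \omega')$ constructed from $\{(W'_x, W_x)\}_x$ and $\{U_e\}_e$ is well-defined and has the two SFP measures as marginals, which is routine by Kolmogorov extension since everything factorizes over vertices and edges. I expect no real obstacle here beyond bookkeeping; the substance of the argument is the two elementary monotonicities (weights to probabilities, probabilities to edge-inclusion) combined with the definition of increasing event, and these are precisely what the coupling is designed to package.
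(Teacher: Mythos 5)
Your proposal is correct and is precisely the ``straightforward coupling argument'' that the paper invokes without writing out: a monotone (quantile) coupling of the weights combined with shared uniforms on the edges, followed by the definition of an increasing event. Nothing is missing, and the approach coincides with the paper's intended proof.
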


This lemma can be proved with a straightforward coupling argument that we leave to the reader.
\medskip
	
We  commonly use Lemma \ref{obs:increasing} to simplify the law of $W$: If the law of $W$ satisfies \eqref{lowerBoundWeights} and the law of $W'$ satisfies
\begin{equation}\label{e:Wprime}
	\P(W' \ge w) = c w^{-(\tau-1)}, \qquad \text{ for all } w \ge c^{1/(\tau-1)},
\end{equation}
with the same constant $c$ as in \eqref{lowerBoundWeights}, then \eqref{e:coupling} holds.

The upcoming lemmas  allow us to construct a coarse-graining argument in the proofs of Theorems \ref{transientRandomThm} and \ref{thm:trees}. 
\begin{lemma}\label{powerLawMultiplyScalar}
Let $W$ be a random variable with law given by \eqref{e:Wprime}. Let $W''$ be a random variable with law given by
\begin{align}\label{standardPowerLaw}
\Prob(W''\geq w)&=w^{-(\tau-1)},\qquad \text{ for all }w\geq 1.
\end{align}
Then, for $y\geq c^{1/(\tau-1)}$, the conditional law of $W$ given $\{W\geq y\}$
is the same as
the law of $yW''$, i.e., $\Prob(W\geq x \mid W\geq y) = \P(y W'' \ge x)$.
\end{lemma}
\proof
For $x\geq y$
\[
\Prob(W\geq x \mid W\geq y) = \left(\frac{y}{x}\right)^{\tau-1}=\Prob\left(W''\geq\frac{x}{y}\right)=\Prob(yW'' \ge x).\qed
\]
\medskip

\begin{lemma}\label{maximumWeight}
Let $\{W_i\}_{i=1}^\infty$ be an i.i.d.\ sequence of random variables with law given by \eqref{e:Wprime}.  Then, for all $n\ge1$ and all $K_2\geq K_1\geq c^{1/(\tau-1)}$,
\[
\Prob\left(\max_{i=1,...,n}W_{i}\leq K_2 \, \Big| \, W_{i}\geq K_1 \text{ for }i=1,\dots ,n \right)\leq \exp\left(-n\left(\frac{K_1}{K_2}\right)^{\tau-1}\right).
\]
\end{lemma}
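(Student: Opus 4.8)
The plan is to exploit the independence of the weights together with the exact tail formula \eqref{e:Wprime} to compute the conditional probability explicitly, and then to bound the resulting product by an exponential.

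First I would observe that, because the $W_i$ are independent, conditioning on the event $\{W_i \ge K_1 \text{ for all } i\}$ factorizes: each $W_i$ is conditioned independently on $\{W_i \ge K_1\}$. Since $\{\max_{i} W_i \le K_2\} = \{W_i \le K_2 \text{ for all } i\}$, the conditional probability of interest is a product,
\[
\Prob\Big(\max_{i=1,\dots,n} W_i \le K_2 \,\Big|\, W_i \ge K_1 \text{ for all } i\Big) = \prod_{i=1}^n \Prob\big(W_i \le K_2 \mid W_i \ge K_1\big).
\]
Next I would compute the single-variable factor. By Lemma \ref{powerLawMultiplyScalar} (or directly from \eqref{e:Wprime}, which applies since $K_1 \ge c^{1/(\tau-1)}$), the conditional law of $W$ given $\{W \ge K_1\}$ equals the law of $K_1 W''$, with $W''$ distributed as in \eqref{standardPowerLaw}. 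Hence, using $K_2/K_1 \ge 1$,
\[
\Prob\big(W \le K_2 \mid W \ge K_1\big) = \Prob\big(W'' \le K_2/K_1\big) = 1 - (K_1/K_2)^{\tau-1}.
\]

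Finally, I would substitute this into the product and apply the elementary inequality $1 - x \le e^{-x}$ with $x = (K_1/K_2)^{\tau-1} \in [0,1]$, which gives
\[
\Big(1 - (K_1/K_2)^{\tau-1}\Big)^n \le \exp\Big(-n\,(K_1/K_2)^{\tau-1}\Big),
\]
the claimed bound.

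There is no genuine obstacle here: the statement reduces to an exact computation followed by the standard exponential estimate. The only points demanding a little care are the factorization of the conditioning, which relies on the independence of the $W_i$, and the correct handling of the tail at the thresholds $K_1, K_2$; since both exceed $c^{1/(\tau-1)}$ the formula \eqref{e:Wprime} is in force, and since $K_2 \ge K_1$ the ratio $(K_1/K_2)^{\tau-1}$ lies in $[0,1]$, so that $1-x \le e^{-x}$ may be applied legitimately.
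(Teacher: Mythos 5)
Your proposal is correct and follows essentially the same route as the paper's proof: factorize the conditional probability by independence, compute the single-variable factor $1-(K_1/K_2)^{\tau-1}$ from the exact tail \eqref{e:Wprime}, and apply $1-x\le e^{-x}$. The paper merely states this more tersely; your added care about the thresholds and the range of $x$ is fine but not a substantive difference.
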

\medskip

\proof
Using that the weights are i.i.d., that $K_2\geq K_1$, and that $1-x\leq \exp(-x)$, we can bound the left-hand side by
\[
\left(1-\left(\frac{K_2}{K_1}\right)^{-(\tau-1)}\right)^n
\leq  \exp\left(-n\left(\frac{K_1}{K_2}\right)^{\tau-1}\right). \qed
\]
\medskip
\begin{lemma}\label{transientLemmaBigDegrees}
Fix an integer $d\ge1$ and $\alpha \in (0,\infty)$ such that $\gamma=\alpha(\tau-1)/d<2$. Assign to each vertex in $[0,N-1]^d \subset \Zd$ an i.i.d.\ random variable with law satisfying \eqref{lowerBoundWeights}. Let $E_{N,\beta}$ be the event that the box $[0,N-1]^d$ contains at least $\log  N$ vertices with weight larger than $\beta N^{\alpha/2}$. Then, for all $\beta>0$,
\[
\Prob\left(E_{N,\beta}\right)\longrightarrow 1,\qquad \text{ as }N\rightarrow\infty.
\]
\end{lemma}

\proof
Let $Y$ denote the number of vertices in $[0,N-1]^d$ with weight exceeding $\log N$. By \eqref{lowerBoundWeights} and independence of the weights we have $Y \preceq_d X$, where
$X\sim\text{Bin}(N^d,c\left(\beta N^{\alpha/2}\right)^{-(\tau-1)})$. 
Note that since $\gamma < 2$ we have $\E[X] = c \beta^{-(\tau-1)} N^{d(1-\gamma/2)} \gg \log N$ and Var$(X) \ll \E[X]^2$. It follows by the Paley-Zygmund inequality that (when $N$ is sufficiently large),
\[
	\P(E_{N, \beta}) \ge  \P(X \ge \log N) \ge \frac{(\E[X] - \log N)^2}{\mathrm{Var}(X) + \E[X]^2} 
	\longrightarrow 1. \qed
\]
\medskip

We call any set that is a translate of $[0,N-1]^d \subset \Zd$ an \emph{$N$-box.} We say that two $N$-boxes $\Qcal_1=v_1+[0,N-1]^d$ and $\Qcal_2=v_2+[0,N-1]^d$ are ``$k$ boxes away from each other'' if $\vert v_1-v_2\vert=kN$ (where we recall that $|\cdot|$ denotes the $\ell_1$-norm). 

\begin{lemma}\label{maxDistance} Let $d \ge 1$ and $k \ge 1$.
Consider two $N$-boxes $\Qcal_1$ and $\Qcal_2$ that are $k$ boxes away from each other. For arbitrary $u_1\in \Qcal_1$ and $u_2\in \Qcal_2$,
\[
\vert u_1-u_2\vert\leq 3dkN.
\]
\end{lemma}
\proof
Let $v_1$ and $v_2$ be such that $\Qcal_1=v_1+[0,N-1]^d$ and $\Qcal_2=v_2+[0,N-1]^d$.
Applying the triangle inequality twice, one obtains
\[
\vert u_1 -u_2\vert 
\leq \vert v_1 -v_2\vert+\vert u_1 -v_1\vert + \vert v_2 -u_2\vert 
\leq kN+2dN
\leq 3dkN.\qed
\]
\medskip

\begin{lemma}\label{transientLemmaConnectivity}
Fix $N \in \mathbb{N}$ and let $\mathcal{Q}_1$ and $\mathcal{Q}_2$ be two $N$-boxes that are $k$ boxes away from each other such that $\mathcal{Q}_1 = Nv_1 + [0,N]^d$ and $\mathcal{Q}_2 = Nv_2 + [0,N]^d$ with $v_1, v_2 \in \Zd$. Let $\beta>0$ be given, the weights $\{W_x\}_{x \in \Zd}$ be i.i.d.\ according to a law satisfying \eqref{e:Wprime}, and $\{W'_x\}_{x \in \Zd}$ be i.i.d.\ with law \eqref{standardPowerLaw}. 
For $i=1,2$ write
\[
	u_i = \argmax_{u \in \Qcal_i} W_u.
\]
Then
\[
\Prob_{(\lambda,W)}\left(\{u_1,u_2\}\text{ is open}\,\big|\,W_{u_1},W_{u_2}\geq \beta N^{\alpha/2}\right)
\geq 
\Prob_{(\lambda\beta^2(3d)^{-\alpha},W')}\Big(\{v_1,v_2\}\text{ is open}\Big).
\]
\end{lemma}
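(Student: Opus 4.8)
The plan is to unwind the conditional probability on the left into an expectation over the edge-occupation variable and the two maximal weights, and then compare it term-by-term with the right-hand side. Writing $M_i := W_{u_i} = \max_{u \in \Qcal_i} W_u$ for the maximal weight in box $i$, and conditioning further on the realized weights, the edge $\{u_1,u_2\}$ is open with probability $1 - \exp(-\lambda M_1 M_2 / |u_1 - u_2|^\alpha)$, so that
\[
\Prob_{(\lambda,W)}\big(\{u_1,u_2\}\text{ open} \mid M_1, M_2 \ge \beta N^{\alpha/2}\big) = \E\Big[1 - \exp\big(-\lambda M_1 M_2 / |u_1 - u_2|^\alpha\big) \;\Big|\; M_1, M_2 \ge \beta N^{\alpha/2}\Big].
\]
Since this integrand is decreasing in $|u_1-u_2|$, I would first invoke Lemma \ref{maxDistance} to replace $|u_1-u_2|$ by the deterministic upper bound $3dkN$, which only lowers the expression. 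Note that this bound does not depend on the random location of the argmax, so the step is valid regardless of where $u_1,u_2$ fall.

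The key step is a stochastic-domination statement for the conditioned maxima. I claim that, conditionally on $\{M_i \ge y\}$ with $y = \beta N^{\alpha/2}$, the maximum $M_i$ stochastically dominates a \emph{single} weight conditioned on exceeding $y$, which by Lemma \ref{powerLawMultiplyScalar} is distributed as $y\,W''$ for $W''$ of law \eqref{standardPowerLaw}. To see this one computes, for $x \ge y$,
\[
\Prob(M_i \ge x \mid M_i \ge y) = \frac{1 - (1 - F(x))^{N^d}}{1 - (1 - F(y))^{N^d}}, \qquad F(w) := \Prob(W \ge w),
\]
and checks that this is at least $F(x)/F(y) = (y/x)^{\tau-1} = \Prob(y W'' \ge x)$. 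The inequality follows because $t \mapsto (1 - (1-t)^n)/t$ is non-increasing on $(0,1]$ (being the average of the decreasing function $s \mapsto n(1-s)^{n-1}$ over $[0,t]$), applied with $t = F(x) \le F(y)$. As the boxes $\Qcal_1, \Qcal_2$ are disjoint, $M_1$ and $M_2$ are independent, and this independence survives conditioning on the product event $\{M_1 \ge y\}\cap\{M_2 \ge y\}$.

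With these ingredients I would conclude as follows. The map $(w_1, w_2)\mapsto 1 - \exp(-\lambda w_1 w_2 / (3dkN)^\alpha)$ is bounded and increasing in each coordinate, so the two-variable version of stochastic domination (couple each conditioned maximum with an independent copy of $y W''$ and use monotonicity) yields
\[
\E\Big[1 - \exp\big(-\lambda M_1 M_2 / (3dkN)^\alpha\big) \;\Big|\; M_1, M_2 \ge y\Big] \ge \E\Big[1 - \exp\big(-\lambda \beta^2 N^\alpha W_1'' W_2'' / (3dkN)^\alpha\big)\Big],
\]
with $W_1'', W_2''$ i.i.d.\ of law \eqref{standardPowerLaw}. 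Simplifying the exponent gives $\lambda \beta^2 (3d)^{-\alpha} k^{-\alpha} W_1'' W_2''$. Finally, since $|v_1 - v_2| = k$, the right-hand side of the lemma equals $\E[1 - \exp(-\lambda\beta^2(3d)^{-\alpha} W'_{v_1} W'_{v_2}/k^\alpha)]$, and as $W'_{v_1}, W'_{v_2}$ are i.i.d.\ with the same law \eqref{standardPowerLaw} the two expectations coincide, closing the chain of inequalities.

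I expect the main obstacle to be the domination step for the conditioned maximum: one must resist the temptation to simply bound $M_i$ below by one fixed weight (which fails because the conditioning event concerns the maximum), and instead establish the monotonicity of $t \mapsto (1-(1-t)^n)/t$ in order to compare the conditioned maximum with a single conditioned weight. A minor technical point worth recording is that Lemma \ref{powerLawMultiplyScalar} requires $y = \beta N^{\alpha/2} \ge c^{1/(\tau-1)}$, which holds once $N$ is large enough.
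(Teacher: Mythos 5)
Your proposal is correct and follows the same overall route as the paper: bound the (random) distance $|u_1-u_2|$ uniformly by $3dkN$ via Lemma \ref{maxDistance}, rescale the conditioned weights to the law \eqref{standardPowerLaw} via Lemma \ref{powerLawMultiplyScalar}, and identify the resulting expression with the right-hand side using $|v_1-v_2|=k$. The one place where you genuinely add something is the treatment of the conditioned maxima. The paper's proof passes directly from $\Prob_{(\lambda,W)}(\{u_1,u_2\}\text{ open}\mid W_{u_1},W_{u_2}\ge \beta N^{\alpha/2})$ to an expression involving two i.i.d.\ \emph{single} weights $W_1,W_2$ conditioned on exceeding $\beta N^{\alpha/2}$, written as an equality; strictly speaking the conditional law of $\max_{u\in\Qcal_i}W_u$ given that the maximum exceeds $y$ is not the conditional law of one weight given that it exceeds $y$, so only an inequality (in the favorable direction) holds there. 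You supply exactly the missing justification: the monotonicity of $t\mapsto (1-(1-t)^n)/t$ gives stochastic domination of the conditioned maximum over the conditioned single weight, and since the test function is increasing in each coordinate and the two boxes are disjoint, the product form of the conditioning preserves independence and the comparison goes through. Your closing remark that $\beta N^{\alpha/2}\ge c^{1/(\tau-1)}$ is needed to invoke Lemma \ref{powerLawMultiplyScalar} is a fair caveat; it is harmless in all applications of the lemma, where $N$ is taken large.
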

\proof
Let $U \sim $ Unif$[0,1]$ denote a standard uniform random variable with cdf $\P(U < x) =x$ for $x \in [0,1]$. Then, by Definition \ref{SFPDef},
\begin{multline*}
	\Prob_{(\lambda,W)}\left(\{u_1,u_2\}\text{ is open}\,\big|\,W_{u_1},W_{u_2}\geq \beta N^{\alpha/2}\right)\\
	 = \P^*\left(U < 1-\exp\left(-\lambda \frac{W_1 W_2}{|u_1 -u_2|^\alpha}\right) \mid W_1, W_2 \ge \beta N^{\alpha/2}\right),
\end{multline*}
where the probability measure $\P^*$ on the right-hand side is with respect to $W_1$ and $W_2$, which are i.i.d.\ with the same law as the elements of $\{W_x\}_{x \in \Zd}$, and an independent random variable $U \sim$ Unif$[0,1]$.
Using Lemmas \ref{powerLawMultiplyScalar} and \ref{maxDistance} we bound the right-hand side from below by
\[
	\P^{**}\left(U < 1-\exp\left(-\lambda \beta^2 (3d)^{-\alpha} \frac{W'_1 W'_2}{k^\alpha}\right)\right),
\]
where the probability measure $\P^{**}$ is with respect to $W'_1$ and $W'_2$ which are i.i.d.\ with the same law as the elements of $\{W'_x\}_{x \in \Zd}$ and an independent random variable $U \sim$ Unif$[0,1]$.

On the other hand, since $|v_1 -v_2|=k$, by Definition \ref{SFPDef} we also have
\[
	\P_{(\lambda \beta^2 (3d)^{-\alpha},W')}(\{v_1,v_2\} \text{ is open})
	 =  \P^{**}\left(U < 1-\exp\left(-\lambda \beta^2 (3d)^{-\alpha} \frac{W'_1 W'_2}{k^\alpha}\right)\right).
\]
The claim thus follows. \qed

\medskip

\section{Distances in the infinite degree case: proof of Theorem \ref{thmGraphDiameter}}\label{SectionDistance}
\proof[Proof of Theorem \ref{thmGraphDiameter}(1)]
[\emph{The case $\gamma\leq 1$}] By translation invariance of the model, it suffices to show  $\P(d_{\Ccal}(0,x)\leq 2)=1$.
Since we assumed that the law of $W$ satisfies \eqref{lowerBoundWeights}, there exists a $c>0$ such that $W_x \ge c^{1/(\tau-1)}$ for all $x \in \Zd$ almost surely.

Fix $x\in\Zd$. For $k\geq1$, let $\mathcal{Q}_k$ denote the box centred at $x/2$ with sides of length $l_k:=2^k|x|$ and let $\mathcal{A}_k :=\mathcal{Q}_k \setminus \mathcal{Q}_{k-1}$ with $\mathcal{A}_1 :=\mathcal{Q}_1$.  Note that there are $(2^d-1)|x|^d2^{d(k-1)}$ vertices in $\mathcal{A}_k$. 
\begin{figure}[hbt]
\centering
\includegraphics[keepaspectratio,width = 8cm]{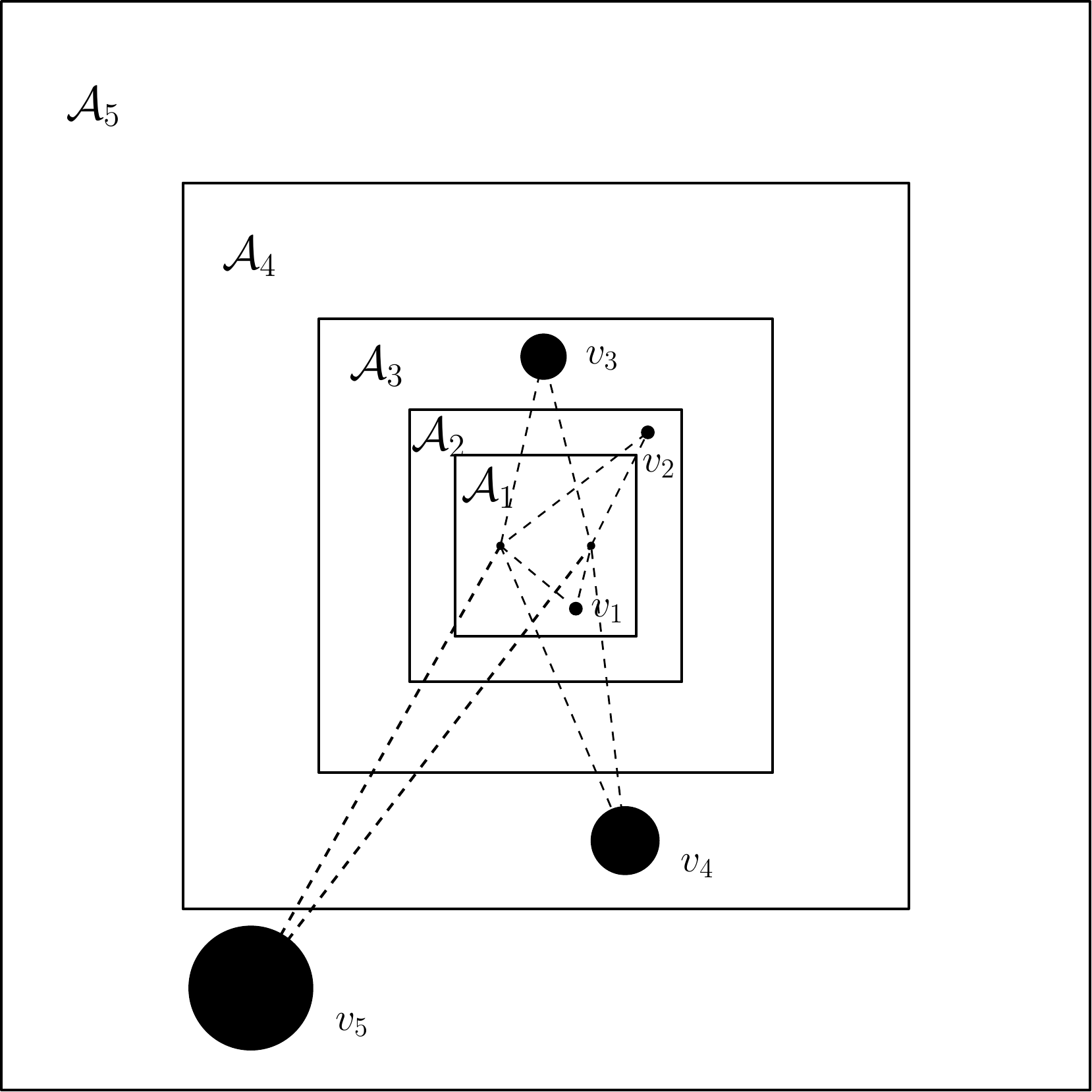}
\caption{Construction for the proof of Theorem \ref{thmGraphDiameter} for $d=2$.}\label{diameterFigure}
\end{figure}

We prove that the probability that the vertex with maximal weight for every $\mathcal{A}_k$ is connected to both 0 and $x$ is strictly greater than some positive constant and let the result follow by Borel-Cantelli. 
 
For each $k \in \mathbb{N}$, let $v_k$ be the vertex in $\mathcal{A}_k$ with maximal weight and let 
$E_k$ be the event 
 that $v_k$ is connected by an open edge to both 0 and $x$. Let $a_k := 2^{\frac{dk}{\tau-1}}$ and denote
$F_k :=\{W_{v_k}\geq a_k\}$, which is an increasing event.

Using Lemma \ref{obs:increasing} and Lemma \ref{maximumWeight} with $K_1=c^{1/(\tau-1)}$ and $K_2=2^{\frac{dk}{\tau-1}}$, we can bound
\begin{equation}\label{Fk-bound}
\begin{split}
\P_{(\lambda, W)}(F_k) & \ge \P_{(\lambda, W')}(F_k) 
\geq 1- \exp\left({-c(2^d-1)|x|^d2^{d(k-1)}2^{-\frac{dk}{\tau-1}(\tau-1)}}\right)\\
&=1-\exp\left(-\frac{c(2^d-1)|x|^d}{2}\right), 
\end{split}
\end{equation}
where the measure $\P_{(\lambda, W')}$ refers to a model where all weights are distributed as in \eqref{e:Wprime}. 
The right hand side of \eqref{Fk-bound} is bounded below by some $\delta > 0$ uniformly in $k$.

Observe that $|v_k|,|v_k-x|\leq dl_k$ and recall that $\tau>1$ and $\gamma\leq 1$. Write $\varepsilon = c^{1/(\tau-1)}$. We can bound the probabilities on the events $E_k$ by conditioning on $F_k$ as follows:
\begin{align*}
\Prob_{(\lambda, W)}(E_k \mid F_k) \ge \Prob_{(\lambda, W')}(E_k \mid F_k) \ge \Prob_{(\lambda, a_k)}(E_k)
&\geq \left(1-\exp\left(-\frac{\lambda \varepsilon a_k }{(dl_k)^\alpha}\right)\right)^2\\
&=\left(1-\exp\left(-\frac{\lambda \varepsilon}{(d|x|)^\alpha}2^{dk/(\tau-1)-k\alpha}\right)\right)^2\\
&\geq \frac{1}{4}\left(\left(\frac{\lambda \varepsilon}{(d|x|)^\alpha}2^{dk(1/(\tau-1)-\alpha/d)}\right)^2\wedge1\right)\\
&= \left(\left(\frac{\lambda \varepsilon}{2(d|x|)^{\alpha}}\right)^2\left(4^{d(1-\gamma)/(\tau-1)}\right)^k\right)\wedge\frac{1}{4} \\
&\ge \left(\left(\frac{\lambda \varepsilon}{2(d|x|)^{\alpha}}\right)^2\right)\wedge\frac{1}{4} =: \eta.
\end{align*}
Since this bound is independent of $k$ and of the weights $\{W_x\}_{x \in \Zd}$, it follows that $\P_{(\lambda,W)}(E_k \mid F_k) \ge \eta$, and therefore, 
\begin{equation*}
\P_{(\lambda,W)}(E_k) = \P_{(\lambda,W)}(E_k \mid F_k)\;\P_{(\lambda, W)} (F_k)\geq \eta \, \delta >0.
\end{equation*} 

Observe that the events $E_k$ are independent of each other, hence we obtain the result for $\gamma\leq 1$ using the Lemma of Borel-Cantelli. \qed
\medskip

\proof [Proof of Theorem \ref{thmGraphDiameter}(2)]
[\emph{The case $\alpha <d$}]
By translation invariance it again suffices to show that 
\[\P_{(\lambda,W)}(d_{\Ccal}(0,x) \le \lceil d /(d-\alpha)\rceil) =1\] 
for all $x \in \Zd$. Recall that the assumption \eqref{lowerBoundWeights} on the law of $W$ implies that $W \ge c^{1/(\tau-1)}$ almost surely. Note that $\{ d_{\Ccal}(0,x) \le \lceil d /(d-\alpha)\rceil \}$ is an increasing event. Hence by Lemma~\ref{obs:increasing},
\[
	\P_{(\lambda,W)}(d_{\Ccal}(0,x) \le \lceil d /(d-\alpha)\rceil) \ge \P_{(\lambda,c^{1/(\tau-1)})}(d_{\Ccal}(0,x) \le \lceil d /(d-\alpha)\rceil).
\]
Observe that SFP with constant vertex weights is equivalent to long-range percolation with the same $d$ and $\alpha$ and some possibly different parameter $\lambda'$. 

Benjamini \emph{et al.\ }\cite[Example 6.1]{Benjamini} show that the diameter of the infinite cluster in long-range percolation with $\alpha < d$ for any $\lambda>0$ is equal to $\lceil d/(d-\alpha)\rceil$ almost surely. Our claim about SFP therefore follows. \qed

\section{Transience vs.\ recurrence}\label{SectionRW}
\subsection*{Transience proof}
The proof of Theorem \ref{transientRandomThm} is inspired by Berger's proof of transience for long-range percolation \cite[Theorem 1.4(II)]{BergerTransience}. We use in particular a multiscale ansatz which roots back to the work of Newman and Schulman \cite{newman1986} for long-range percolation.
\medskip

\textbf{The case $1< \gamma<2$.} 
In view of Lemma \ref{obs:increasing}, we may assume  \eqref{e:Wprime} rather than \eqref{lowerBoundWeights} without loss of generality. 
We show that the infinite cluster of SFP almost surely contains a transient subgraph. The proof has two steps:
\begin{enumerate}
\item We first assume that $\lambda$ is large enough. With small probability we remove some vertices from the graph independent of each other. Then we use a multiscale ansatz: we group vertices into finite boxes, and call boxes `good' or `bad' according to the weights and edge structure \emph{inside} the box. We iterate this process by considering larger boxes, which we call good or bad according to the number of good boxes in them and the edges between vertices in those boxes. This will imply transience for large values of $\lambda$. 
\item To couple the original model (for \emph{any} $\lambda>\lambda_c$) to the model of the first step, we use a coarse-graining argument: We `zoom out' by considering large boxes of vertices and only considering the vertices with maximum weight in the boxes. We show that, with high probability, the weights of these vertices are so high, that the graph, only defined on these vertices, dominates a graph as described in the first step.
\end{enumerate}

We use \cite[Lemma 2.7]{BergerTransience}, which describes a sufficient structure for a graph to be transient. To this end, we introduce the notion of a ``renormalized graph'':

We start with some notation. Given a graph $G=(V,E)$ and a sequence $\{C_n\}_{n=1}^\infty$ let $V_l(j_l,\dots,j_1)$ with $l \in \mathbb{N}$ and $j_n \in \{1,\dots,C_n\}$ be a subset of the vertex set $V$. Now let for $l \ge m$
\[
	V_l(j_l,\dots,j_m) = \bigcup_{j_{m-1} =1}^{C_{m-1}} \dotsm \bigcup_{j_1=1 }^{C_1} V_l(j_l,\dots,j_1).
\]
We call the sets $V_l(j_l,\dots,j_m)$ \emph{bags}, and the numbers $C_n$ \emph{bag sizes.}

\begin{definition}
We say that the graph $G=(V,E)$ is \emph{renormalized for the sequence} $\{C_n\}_{n=1}^\infty$ if we can construct an infinite sequence of graphs such that the vertices of the \emph{$l$-th stage graph} are labelled by $V_{l}(j_l, \dots, j_1)$ for all $j_n \in \{1,\dots, C_n\}$, and such that for every $l \ge m >2$, every $j_l,\dots,j_{m+1}$, and all pairs of distinct $u_m, w_m \in \{1,\dots,C_m\}$ and $u_{m-1}, w_{m-1} \in \{1, \dots, C_{m-1}\}$ there is an edge in $G$ between a vertex in $V_l(j_l, \dots, j_{m+1}, u_m, u_{m-1})$ and a vertex in $V_l( j_l, \dots, j_{m+1}, w_m, w_{m-1}).$
\end{definition}
The underlying intuition is that every $n$-th stage bag contains $C_n$ $(n-1)$-stage bags, which contains again $C_{n-1}$ $(n-2)$-stage bags. Every pair of $(n-2)$-stage bags in an $n$ stage bag is connected by an edge between one of the vertices in the bags (see Figure \ref{renormalizedGraph}).
\begin{figure}
\centering
\includegraphics[keepaspectratio,width = \textwidth]{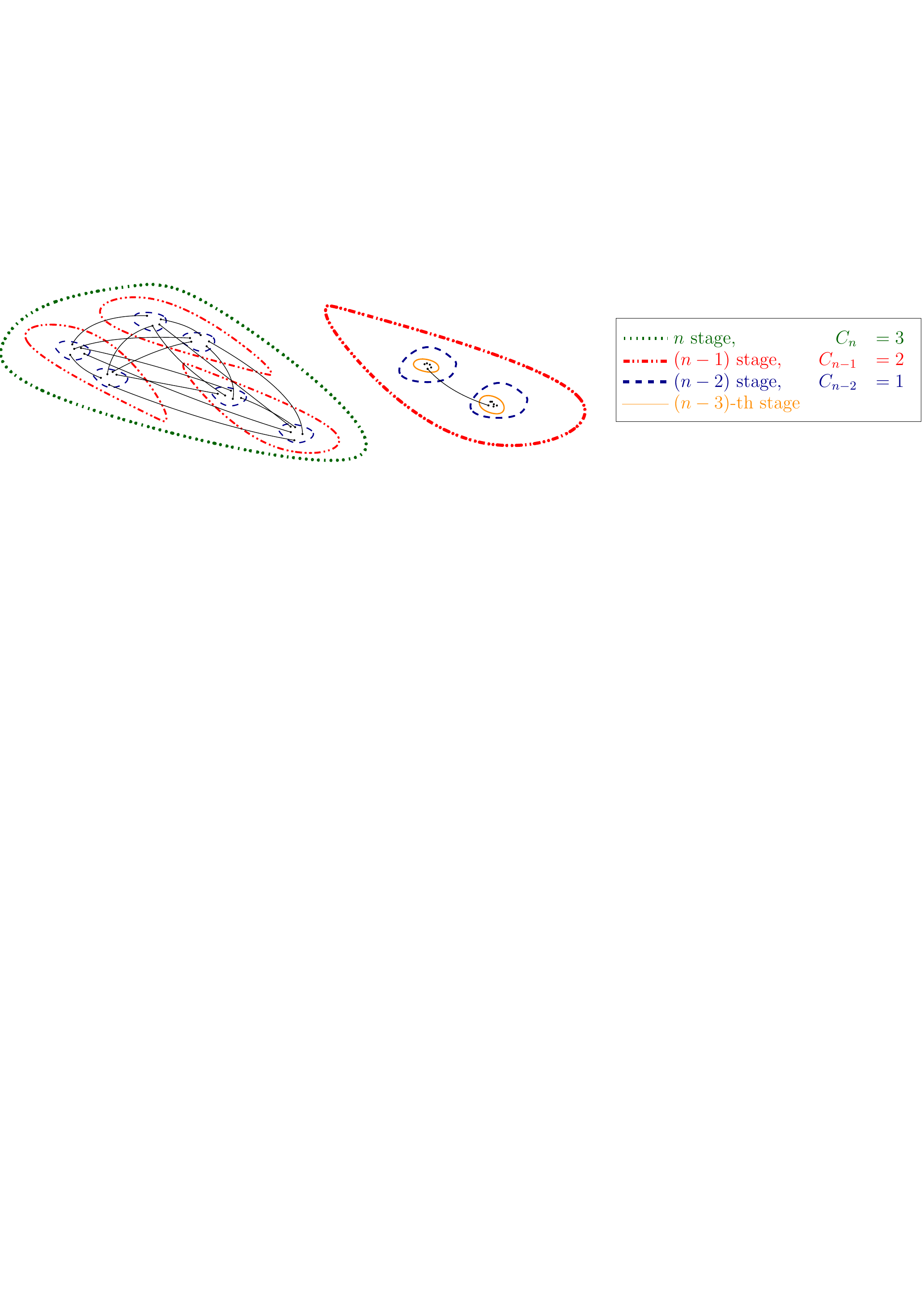}
\caption{$n$ and $(n-1)$ stage bag of a renormalized graph}\label{renormalizedGraph}
\end{figure}

\begin{lemma}[Berger, {\cite[Lemma 2.7]{BergerTransience}}]\label{renormalizedTransient}
A graph renormalized for the sequence $C_n$ is transient if $\sum_{n=1}^\infty C_n^{-1}<\infty$.
\end{lemma}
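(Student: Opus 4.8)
The plan is to prove transience via the standard electrical-network criterion: a locally finite connected graph is transient for simple random walk if and only if it admits a unit flow from a fixed vertex $o$ to infinity with finite energy $\mathcal{E}(\theta)=\sum_{e}\theta(e)^2<\infty$ (Terry Lyons' criterion; see also Lyons and Peres). Since adding edges can only decrease energy (Rayleigh monotonicity), it suffices to construct such a flow using \emph{only} the hierarchy edges and the cross-edges guaranteed by the renormalization, setting $\theta$ to zero on every other edge. I would therefore fix the nested sequence of bags $B_1\subset B_2\subset B_3\subset\cdots$ containing $o$, where $B_n$ is the $n$-stage bag through $o$, and recall that $B_{n+1}$ is partitioned into $C_{n+1}$ stage-$n$ sub-bags, one of which is $B_n$, and that the two-level connectivity condition forces these sub-bags to be pairwise connected (through edges between their own sub-bags), so that at the coarse-grained level they behave like the complete graph $K_{C_{n+1}}$.

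The key observation is that, to escape to infinity, the walk must eventually leave every $B_n$, so a unit flow $\theta$ to infinity carries net current $1$ across the boundary of each $B_n$. I would route this current scale by scale. At scale $n$ the current leaving $B_n$ is spread as evenly as the inter-bag edges allow over the $C_{n+1}$ sibling stage-$n$ sub-bags inside $B_{n+1}$; by the complete-graph-like connectivity, one unit can be pushed from $B_n$ to the others with each inter-bag edge carrying current of order $1/C_{n+1}$. The energy contributed by the inter-bag edges at scale $n$ is then of order
\[
C_{n+1}\cdot\Big(\frac{1}{C_{n+1}}\Big)^2=\frac{1}{C_{n+1}},
\]
which is exactly the form matching the hypothesis. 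Equivalently, the effective resistance across each scale is $O(1/C_{n})$ — recall that $R_{\mathrm{eff}}$ between two vertices of $K_{C}$ is $2/C$ — and these resistances add across the nested scales, so that $R_{\mathrm{eff}}(o\to\infty)\le c\sum_{n}C_n^{-1}<\infty$.

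Making this rigorous requires a recursive definition of $\theta$ by induction on the scale: the current delivered into each sub-bag at scale $n$ must be transported through that sub-bag's interior to the cross-edges feeding scale $n+1$, and all these pieces must combine into a single flow that is divergence-free away from $o$. I would distribute the incoming current uniformly over sub-bags at each step and concatenate it with a fixed internal routing inside each bag that carries its allotted current to the appropriate boundary edges. The total energy then splits into an inter-bag part $\sum_{n}O(1/C_{n+1})$ and an internal part; the fact that the connectivity condition links bags differing in \emph{both} of their top two coordinates is precisely what keeps the internal routes short and their per-scale energy also $O(1/C_n)$.

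The main obstacle I expect is exactly this bookkeeping: verifying that the recursively assembled object is a genuine flow and that the internal routing energy is summable, as opposed to the scale-by-scale heuristic, which is transparent. Once the flow is shown to have energy bounded by $c\sum_{n}C_n^{-1}<\infty$, Lyons' criterion yields transience and the lemma follows.
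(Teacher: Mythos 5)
Your proposal follows essentially the same route as the proof the paper relies on (it gives no independent argument and simply defers to Berger's proof of his Lemma~2.7): construct a unit flow to infinity by splitting the current over the $C_n$ sub-bags at each scale, route it through the two-level cross-edges guaranteed by the renormalization, bound the energy by a constant times $\sum_n C_n^{-1}$, and invoke the finite-energy-flow criterion for transience. Your sketch, including the acknowledged recursive bookkeeping needed to assemble a genuine divergence-free flow, is a faithful outline of that argument.
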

The lemma follows from the proof of \cite[Lemma 2.7]{BergerTransience}. 

\begin{proposition}\label{lemmaTransientLargeLambda}
Consider scale-free percolation with $\gamma<2$ and weight distribution satisfying \eqref{standardPowerLaw}.
Independently of this, perform an i.i.d.\ Bernoulli site percolation on the vertices of $\Zd$, colouring a vertex ``green'' with probability $\mu \in (0,1]$.

Then the subgraph of the infinite scale-free percolation cluster that is induced by the green vertices has a (unique) infinite component $\Ccal_{\lambda,\mu}$.
There exists $\mu_0<1$ and $\lambda_0 > 0$, such that $\Ccal_{\lambda,\mu}$ is transient for $\mu\geq \mu_0$ and $\lambda\geq\lambda_0$ almost surely.
\end{proposition}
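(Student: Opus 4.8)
The plan is to exhibit inside $\Ccal_{\lambda,\mu}$ a subgraph that is renormalized for a sequence $\{C_n\}$ with $\sum_n C_n^{-1}<\infty$; transience of $\Ccal_{\lambda,\mu}$ then follows from Lemma \ref{renormalizedTransient} together with the monotonicity of transience under addition of edges (a transient subgraph forces the ambient cluster to be transient). Everything therefore reduces to producing the hierarchical bag structure and verifying its connectivity pattern almost surely, and the construction will simultaneously certify that the infinite green component exists.

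First I would fix a doubly-exponential sequence of scales $N_n=N_0^{\theta^n}$ and tile $\Zd$ at each level $n$ into $N_n$-boxes, each being a union of $(N_n/N_{n-1})^d$ level-$(n-1)$ boxes. The finest bags $V_l(j_l,\dots,j_1)$ are the green vertices of the level-1 boxes, with coarser bags formed by the union rule of the renormalized structure, so that $C_n\asymp(N_n/N_{n-1})^d=N_{n-1}^{d(\theta-1)}$; these tend to infinity, and $\sum_n C_n^{-1}<\infty$ holds as soon as $\theta>1$. To meet the connectivity requirement---an open edge between every pair of level-$(m-1)$ bags lying in a common level-$(m+1)$ box---I note that such bags are at $\ell_1$-distance at most $dN_{m+1}$, so it suffices that each level-$(m-1)$ box contain many green \emph{hubs} of weight at least $\beta N_{m+1}^{\alpha/2}$: two such hubs are then joined by an open edge with probability at least $1-\exp(-\lambda\beta^2 d^{-\alpha})$, a bound uniform in $n$ (this is exactly the mechanism behind Lemmas \ref{maxDistance} and \ref{transientLemmaConnectivity}). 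Since edges are conditionally independent, two bags carrying $h$ such green hubs each fail to be joined with probability at most $\exp(-\lambda\beta^2 d^{-\alpha}h^2)$.

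The quantitative heart is counting admissible hubs. A level-$(m-1)$ box holds $N_{m-1}^d$ vertices, and by \eqref{standardPowerLaw} the expected number of weight at least $\beta N_{m+1}^{\alpha/2}$ among them is $\asymp N_{m-1}^{d(1-\gamma\theta^2/2)}$, using $N_{m+1}=N_{m-1}^{\theta^2}$ and $\alpha(\tau-1)/d=\gamma$. This exponent is positive exactly when $\theta^2<2/\gamma$, so the interval $1<\theta<\sqrt{2/\gamma}$ of admissible growth rates is nonempty precisely because $\gamma<2$; this is where the hypothesis enters. Fixing such a $\theta$, a Paley--Zygmund estimate as in Lemma \ref{transientLemmaBigDegrees} shows each box carries $h\asymp N_{m-1}^{d(1-\gamma\theta^2/2)}$ hubs with overwhelming probability, and for $\mu\ge\mu_0$ a positive proportion survive the green thinning. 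Feeding this $h$ into the previous bound and taking a union bound over the at most $\binom{C_mC_{m-1}}{2}$ pairs inside each level-$(m+1)$ box shows the probability that a given box ``fails'' at scale $n$ is super-polynomially small.

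The main obstacle is assembling these scale-by-scale estimates into a single almost-sure statement. I would let $B_n$ be the event that some box near the origin fails at scale $n$, prove $\sum_n\P(B_n)<\infty$, and invoke Borel--Cantelli to conclude that almost surely all scales past some random $n_0$ are good. Building the renormalized subgraph from level $n_0$ upward (the tail $\sum_{n\ge n_0}C_n^{-1}$ is still finite) then lets Lemma \ref{renormalizedTransient} deliver transience. The thresholds $\lambda_0$ and $\mu_0$ are needed only so that the base-scale connection probability and the surviving green-hub density are large enough for the first union bounds to close; uniqueness of $\Ccal_{\lambda,\mu}$ is inherited from the a.s.\ uniqueness of the infinite SFP cluster.
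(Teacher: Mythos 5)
Your local estimates are sound: the hub-count exponent $d(1-\gamma\theta^2/2)>0$ for $\theta^2<2/\gamma$ is exactly where $\gamma<2$ enters, the uniform pairwise connection probability $1-\exp(-\lambda\beta^2d^{-\alpha})$ is correct, and the resulting failure bound $\exp(-\lambda\beta^2d^{-\alpha}h^2)$ is a legitimate alternative to the paper's mechanism of joining boxes through a single dominant vertex of weight $u_n$. The gap is in the assembly. Because you take \emph{every} level-$(m-1)$ sub-box as a bag (i.e.\ $C_m=(N_m/N_{m-1})^d$ with no selection), the infinite renormalized structure requires that \emph{every} level-$(m+1)$ box occurring anywhere in the hierarchy satisfy the scale-$m$ connectivity condition and that \emph{every} level-$(m-1)$ box carry its quota of hubs. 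For fixed $m$ these are events on disjoint regions, independent and each failing with a fixed probability $p_m>0$, and the $l$-th stage graph contains $\prod_{k=m+1}^{l}C_k\to\infty$ of them as $l\to\infty$; by the second Borel--Cantelli lemma almost surely infinitely many fail, so the structure you describe almost surely does \emph{not} exist. Your events $B_n$ only monitor the box containing the origin at scale $n$, and therefore do not cover the off-origin level-$(m+1)$ boxes whose internal connectivity the renormalized graph equally demands.

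The paper avoids this with a selection step: an $n$-stage box is declared good if merely $C_n$ of its $D_n^d$ sub-boxes are good (condition (E)), only the good sub-boxes are promoted to bags, and goodness is defined recursively so that the single event $L_n$ at the origin certifies the entire hierarchy beneath it. A Chernoff bound controls $\P(E_n^c)$ in terms of $\P(L_{n-1})$, the recursion closes to give $\prod_n\P(L_n)>0$, hence $\P\bigl(\bigcap_n L_n\bigr)>0$ by positive association, and translation invariance upgrades this to an almost sure statement. To repair your argument you would need to introduce the same kind of selection (discard the bad sub-boxes, relabel the surviving $C_n$ as the bags, and accept that $C_n$ is a fixed fraction of $(N_n/N_{n-1})^d$) and replace the union bound over scales near the origin by such a recursive estimate; your hub-based connectivity bound could then be substituted for the paper's clique condition (F) without further changes.
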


The proof exploits a multiscale technique. Indeed, we proceed by showing that $\Ccal_{\lambda,\mu}$ contains a renormalized subgraph that is transient. Therefore, $\Ccal_{\lambda,\mu}$ is also transient. 

\proof[Proof of Proposition \ref{lemmaTransientLargeLambda}]
For all $n \in \mathbb{N}$, let 
\[
	D_n := 2(n+1)^2, \qquad C_n := (n+1)^{2d},
\]
and
\[
	u_n := d^{\alpha/2}(n+2)^{d(2-\gamma)/2}2^{(n+2)\alpha/2}((n+3)!)^\alpha.
\]
We partition the lattice $\Zd$ into disjoint boxes of side length $D_1$, so that each such box contains $D_1^d$ vertices, and call these the \emph{1-stage} boxes. (By convention we call vertices of $\Zd$ the \emph{0-stage} boxes.) We view these boxes as the vertices of a renormalized lattice. Now cover the lattice again, grouping together $(D_2)^d$ 1-stage boxes to form \emph{2-stage} boxes with sides of length $D_2$. Continue in this fashion, so that the \emph{$n$-stage} boxes form a covering of $\Zd$ by translates of $[0,\prod_{k=1}^n D_k-1]^d$.

We call a 0-stage box ``good'' if the vertex associated with it is green. 

For every stage $i \geq 1$, we define rules for a box to be ``good'' or ``bad'', depending only on the weights $W_x$ and the edges of $\Ccal$ inside the box. This implies that disjoint boxes are good or bad independently of each other.

A 1-stage box is good if it contains at least $C_1$ good 0-stage boxes and one of the vertices in these boxes has weight at least $u_1$. For each good $1$-stage box, call the maximum-weight vertex, having weight at least $u_1$, and call it \emph{1-dominant.}

For $n\geq 2,$ say that an $n$-stage box $\mathcal{Q}$ is good if the following three conditions are satisfied:
\begin{enumerate}
\item[(E)] At least $C_n$ of the $(n-1)$-stage boxes in $\mathcal{Q}$ are good. 
\item[(F)] For any good $(n-1)$-stage box $\Qcal' \subset \Qcal$, the $(n-2)$-dominant vertices in $\Qcal'$ form a \emph{clique} (i.e., every two $(n-2)$-dominant vertices in $\Qcal'$ are connected by an edge in $\Ccal$).
\item[(G)] There is an $(n-1)$-dominant vertex in one of its good $(n-1)$-stage boxes, with weight at least $u_{n}$.
\end{enumerate} 
For each good $n$-stage box, choose the maximum weight vertex  and call it the \emph{$n$-dominant} vertex if its weight is at least $u_{n}$. (A vertex may be dominant for different values of $n$.)
\begin{figure}
\centering
\includegraphics[width = 0.8\textwidth]{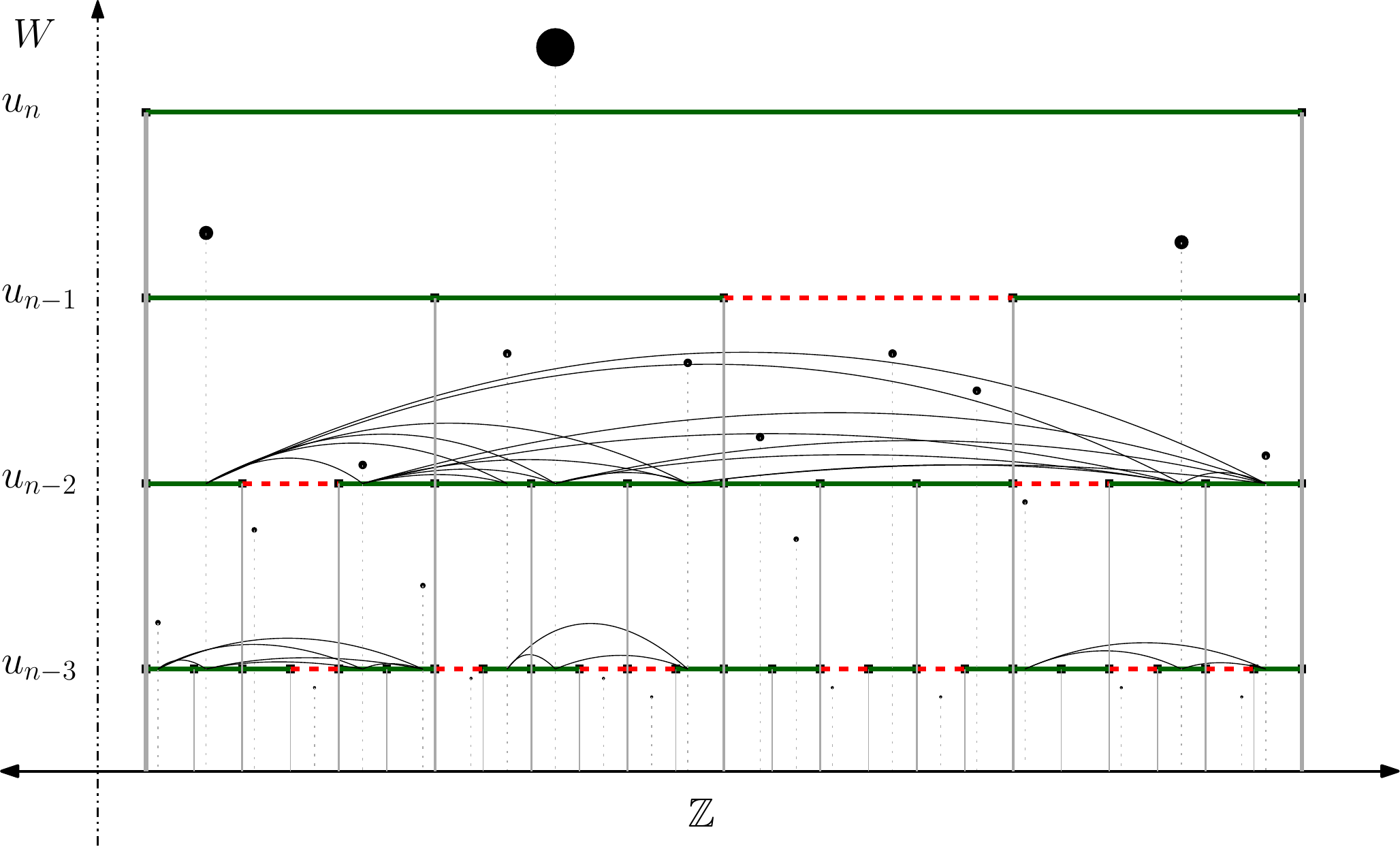}
\centering
\caption{Sketch of the renormalization in Theorem \ref{transientRandomThm} in $d=1$ for \\$D_n=4, D_{n-1}=3, D_{n-2}=2, C_n=3, C_{n-1}=2, C_{n-2}=1$. \\
`Good' boxes are marked with a solid line, `bad' boxes have a dashed line. 
}
\label{renormalizationTransient}
\end{figure}
See Figure \ref{renormalizationTransient} for a sketch of this definition.

Note that by construction, the subgraph of $\Ccal$ induced by the vertices that are in a good $n$-stage box for every $n \ge 0$ is a graph renormalized by a sequence of bag sizes $\{C_n\}$ that satisfies the transience condition of Lemma \ref{renormalizedTransient}. Our aim is therefore to show that almost surely such a subgraph exists.

 Define $E_n(v), F_n(v)$ and $G_n(v)$ to be the events that conditions (E), (F) and (G) hold for the $n$-stage box containing the vertex $v$. To simplify notation, define $E_n:=E_n(0),F_n:=F_n(0)$ and $G_n:=G_n(0)$. We write $L_n(v)$ and $L_n$ for the events that the $n$-th stage boxes containing $v$ and $0$, respectively, are good. By translation invariance it is sufficient to show that 
 \[\Prob\Bigg(\bigcap_{n=1}^\infty L_n\Bigg)>0.\] 
 The events $L_n$ are positively correlated, hence it is sufficient to show that \[\prod_{n=1}^\infty \Prob(L_n)>0.\] 

We bound
\begin{equation}\label{EqLnTrans}
\Prob(L_n^c)\leq\Prob(E_n^c)+\Prob(F_n^c \mid E_n)+\Prob(G_n^c \mid E_n).
\end{equation}
First, we give an upper bound for $\Prob(F_n^c \mid E_n)$. Recall that we use the $\ell_1$-norm for distance in the definition of the edge-probabilities of SFP. The $\ell_1$-distance between two vertices in the same $n$-stage box is at most
\[
d\prod_{k=1}^nD_k=d2^n((n+1)!)^2.
\]
The probability that two good $(n-2)$-stage boxes are \emph{not} connected by an open edge between its $(n-2)$-dominant vertices (which have weight at least $u_{n-2}$) is therefore at most
\begin{align*}
\exp\left(-\lambda d^{-\alpha}u_{n-2}^2\prod_{k=1}^nD_k^{-\alpha}\right)&=\exp\left(-\lambda d^{-\alpha}\left(d^{\alpha/2}n^{d(2-\gamma)/2}2^{n\alpha/2}[(n+1)!]^\alpha\right)^22^{-n\alpha}((n+1)!)^{-2\alpha}\right)\\
&=\exp(-\lambda n^{d(2-\gamma)}).
\end{align*}
There are 
\[
\binom{D^d_nD^d_{n-1}}{2}
<4^d(n+1)^{4d}
\]
pairs of $(n-2)$-stage boxes inside an $n$-stage box, so there can be at most $4^d(n+1)^{4d}$ edges between $(n-2)$-dominant vertices inside a good $n$-stage box.
It follows by taking the union bound that
\begin{equation}\label{upperFc}
\Prob(F_n^c \mid E_n)\leq\exp\left(d\log(4)+4d\log(n+1)-\lambda n^{d(2-\gamma)}\right).
\end{equation}
\medskip

We proceed by establishing an upper bound on $\Prob(G_n^c \mid E_n)$. There exists a constant $c_1>0$ such that
\begin{equation}\label{fractionWeights}
\begin{split}
\frac{u_{n-1}}{u_{n}}& =2^{-\alpha/2}\left(\frac{n+1}{n+2}\right)^{d(2-\gamma)/2}\frac{1}{(n+3)^\alpha}\\
& \geq c_1(n+1)^{-\alpha}.
\end{split}
\end{equation}

Note that any good $n$-stage box contains at least $C_{n}$ $(n-1)$-dominant vertices that all have weight larger than $u_{n-1}$. Using \eqref{fractionWeights}, Lemma \ref{maximumWeight}, and $\gamma=\alpha(\tau-1)/d$, gives for some $c_2>0$ that
\begin{equation}\label{upperGc}
\begin{split}
\Prob(G_n^c \mid E_n) & \leq \exp\left(-C_{n}\left(\frac{u_{n-1}}{u_{n}}\right)^{\tau-1}\right)\\
& \leq\exp\left(c_1^{\tau-1}(n+1)^{2d-\alpha(\tau-1)}\right)\\
&\leq \exp\left(-c_2n^{d(2-\gamma)}\right).
\end{split}
\end{equation}
\medskip

The last term we bound is $\Prob(E_n^c)$. All $(n-1)$-stage boxes are good independent of each other with probability $\Prob(L_{n-1})$. Let $X\sim \text{Bin}(D_n^d,\Prob(L_{n-1}))$ be binomially distributed, so that $\Prob(E_n^c)=\Prob(X<C_n)$.  
We use Chernoff's bound that if $X\sim\text{Bin}(m,p)$, $\theta \in(0,1)$, then  $\Prob(X<(1-\theta)mp)\leq\exp(-{\frac12 \theta^2mp}).$
For our model, this obtains 
\begin{equation}\label{upperEc}
\begin{split}
\Prob(E_n^c) & \leq\exp\left(-\frac{1}{2}\left(1-\frac{1}{2\Prob(L_{n-1})}\right)^2\Prob(L_{n-1}) D_n^d \right)\\
& \leq \exp\left(-2^{d-3}(2\Prob(L_{n-1})-1)^2 (n+1)^{2d}\right).
\end{split}
\end{equation}

Combining \eqref{EqLnTrans}, \eqref{upperFc}, \eqref{upperGc}, and \eqref{upperEc}, gives that 
\begin{equation*}
\begin{split}
	\Prob(L_n^c) & \leq  \exp(d\log(4)+4d\log(n+1)-\lambda n^{d(2-\gamma)}) + \exp\left(-c_2n^{d(2-\gamma)}\right)\\
	& \quad +\exp\left(-2^{d-3}(2\Prob(L_{n-1})-1)^2(n+1)^{2d}\right).
\end{split}
\end{equation*}

If $\lambda$ large enough (say larger than $\lambda_0$), there exists $n_0$ such that for $n\geq n_0$
\begin{equation}\label{upperEFG}
\Prob(L_n^c)\leq 2\exp\left(-c_2n^{d(2-\gamma)}\right)+\exp\left(-2^{d-3}(2\Prob(L_{n-1})-1)^2(n+1)^{2d}\right).
\end{equation}
Define the sequence \[\ell_n:=1-(n+1)^{-3/2}\] and observe that
\begin{equation}\label{prodPositive}
\prod_{n=1}^\infty \ell_n>0.
\end{equation}
For any fixed $n_1>n_0$, we can find $\lambda_0>0$ and $\mu_0<1$ such that 
 $\Prob(L_{n_1})\geq \ell_{n_1}$, because $L_{n_1}$ depends only on the weights and edges inside a \emph{finite} box.
We further bound \eqref{upperEFG} for all $n > n_1$ by 
\begin{align}\label{recursiveBound}
\Prob(L_n^c)&\leq\exp\left(-c_2n^{d(2-\gamma)}\right)+\exp\left(-2^{d-3}\left(1-\frac{1}{\sqrt{2}}\right)^2(n+1)^{2d}\right)\nonumber\\
&\leq (n+1)^{-3/2}
=1-\ell_n,
\end{align}
and choose $n_1$ so large, that the last bound in \eqref{recursiveBound} holds.  
Thus, using \eqref{prodPositive}, \eqref{recursiveBound} and $\Prob(L_n)>0$ for all $n$, yields that
\begin{equation*}
\prod_{n=1}^\infty\Prob(L_n)=\prod_{n=1}^{n_1}\Prob(L_n)\prod_{n=n_1+1}^\infty \Prob(L_n)\geq \prod_{n=1}^{n_1}\Prob(L_n)\prod_{n=n_1+1}^\infty \ell_n>0.
\end{equation*}
With probability 1 the graph contains a cluster of good vertices that can be renormalized for the sequence $C_n$. By Lemma \ref{renormalizedTransient} this cluster is transient itself, since showing transience for a subgraph is enough for transience on the whole graph \cite[Section 9]{LectureNotesPeres}. \qed
\medskip

\textbf{The case $d< \alpha<2d$.} 
We need two lemmas from the literature, which are complementary to the case $\alpha\in(d,2d)$ of Proposition \ref{lemmaTransientLargeLambda} and Lemma \ref{transientLemmaBigDegrees}.

\begin{lemma}[Deprez, Hazra \& W\"uthrich, {\cite[Lemma 9]{DeprezInhomogeneous} }]\label{clustersizeTransient}Assume $\gamma>1$ and let $\alpha\in(d,2d)$. Choose $\lambda>\lambda_c$ and let $\alpha'\in[\alpha,2d)$. For every $\mu\in[0,1)$ and $\beta>0$ there exists $M_0\geq 1$ such that for all $m\geq M_0$
\[
\Prob\left(|\mathcal{C}_m|\geq \beta m^{\alpha'/2}\right)\geq \mu,
\]
where $\mathcal{C}_m$ is the largest connected component in $[0,m-1]^d$.
\end{lemma}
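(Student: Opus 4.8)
The plan is to prove the stronger statement that, with probability at least $\mu$, the subgraph of $\Ccal$ induced on $[0,m-1]^d$ contains a \emph{single} connected component of size at least $\delta m^d$ for some $\delta=\delta(\lambda)>0$; since $\alpha'<2d$ we have $\alpha'/2<d$, so $\delta m^d \ge \beta m^{\alpha'/2}$ for all $m \ge M_0$, which is more than enough. By Lemma \ref{obs:increasing} I may replace \eqref{lowerBoundWeights} by the clean law \eqref{e:Wprime} without loss of generality, and since $\lambda>\lambda_c$ the model is supercritical: there is a unique infinite cluster $\Ccal_\infty$ with percolation density $\theta=\theta(\lambda)>0$. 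The whole difficulty is to produce a \emph{box-internal} cluster of linear size, since the vertices of $\Ccal_\infty \cap [0,m-1]^d$ are a priori connected only through paths that may leave the box.

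First I would set up a block renormalization in the spirit of Pisztora's coarse graining. Fix a block scale $r$, tile $[0,m-1]^d$ by $\lfloor m/r\rfloor^d$ disjoint $r$-boxes, and declare an $r$-box \emph{good} if its internally induced subgraph contains a connected component occupying at least a fraction $\rho$ of the box (its ``crossing cluster''), and if this crossing cluster is joined, by edges with both endpoints in $[0,m-1]^d$, to the crossing clusters of its $2d$ lattice-neighbouring good boxes. Because goodness and the inter-box connections only involve edges within a bounded neighbourhood of a block, the good-block indicators form a finite-range dependent field. The key analytic input is a \emph{finite-size criterion}: since $\lambda>\lambda_c$, the probability that a single $r$-box is good tends to $1$ as $r\to\infty$. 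Combining this with the Liggett--Schonmann--Stacey domination theorem, the good blocks dominate Bernoulli site percolation on $\Zd$ with a density $p^\ast(r)$ that increases to $1$ as $r\to\infty$.

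Next I would pass from the coarse picture back to $\Ccal$. For $r$ fixed large enough that $p^\ast(r)$ exceeds the relevant thresholds, highly supercritical site percolation on the coarse box of side $M:=\lfloor m/r\rfloor$ has, with probability tending to $1$ as $M\to\infty$, a connected cluster of good blocks of size at least $\delta' M^d$ (the standard ``giant-component-in-a-box'' statement for supercritical Bernoulli percolation). By the definition of goodness the crossing clusters of these blocks merge into one component of $\Ccal$ inside $[0,m-1]^d$, of size at least
\[
\rho\, r^d \cdot \delta' M^d \;\ge\; \delta\, m^d .
\]
Choosing first $r=r(\mu)$ and then $M_0$ so that all the ``probability $\to 1$'' events above hold simultaneously with probability at least $\mu$ completes the argument.

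The main obstacle is the coarse-graining step, i.e.\ the finite-size criterion and the merging of adjacent crossing clusters in the presence of the long edges and the unbounded, heavy-tailed weights of SFP. Here the long edges only \emph{add} connectivity, so they cannot destroy crossings; the hypothesis $\gamma>1$ guarantees finite-mean degrees, which keeps the local structure tame; and uniqueness of $\Ccal_\infty$ prevents the coexistence of competing macroscopic clusters. One caveat is that the stochastic domination of supercritical long-range percolation by SFP (via Lemma \ref{obs:increasing}) is only useful for large $\lambda$, so one must genuinely exploit SFP's own supercriticality near $\lambda_c$. Finally, it is worth recording the heuristic that pins down why the scale $m^{\alpha/2}$ is the natural one (even though the true cluster is of linear size): two clusters of sizes $s_1,s_2$ at $\ell_1$-distance of order $m$ fail to be joined by an open edge with probability $\exp\!\big(-\Theta(s_1 s_2/m^{\alpha})\big)$, which is bounded away from $1$ precisely when $s_1,s_2\gtrsim m^{\alpha/2}$. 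This is exactly the input needed downstream, in the renormalization of the case $d<\alpha<2d$, to connect clusters in neighbouring boxes, mirroring the role played by high-weight vertices and Lemma \ref{transientLemmaConnectivity} in the regime $\gamma<2$.
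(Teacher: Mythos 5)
The paper does not actually prove this lemma: it is imported verbatim from Deprez, Hazra and W\"uthrich \cite[Lemma 9]{DeprezInhomogeneous}, and the only argument the paper supplies is the short remark following the statement, namely that Lemma \ref{obs:increasing} (plus a rescaling of $\lambda$ when $c<1$) transfers the result from the exact power law \eqref{standardPowerLaw} to weight laws satisfying \eqref{lowerBoundWeights}. Your proposal is therefore an attempt at a from-scratch proof, and as such it has a genuine gap at its central step. The ``finite-size criterion'' --- that for $\lambda>\lambda_c$ the probability that an $r$-box contains an \emph{internally connected} crossing cluster of density $\rho$ tends to $1$ as $r\to\infty$ --- is asserted with no argument beyond ``since $\lambda>\lambda_c$''. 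Supercriticality only gives an infinite cluster in $\Zd$; vertices of $\Ccal_\infty\cap[0,r-1]^d$ may be connected to one another only via paths leaving the box, and uniqueness of $\Ccal_\infty$ says nothing about the number of competing macroscopic clusters \emph{inside} a finite box. For nearest-neighbour percolation this criterion is Pisztora's coarse-graining theorem, which rests on Grimmett--Marstrand; no analogue is available off the shelf for SFP near $\lambda_c$, and establishing one would be at least as hard as the lemma itself. In short, the step you label as ``the key analytic input'' is the entire content of the result, so the argument is circular as written.

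It is also worth noting that you aim for a strictly stronger conclusion (a cluster of size $\delta m^d$) than the lemma requires ($\beta m^{\alpha'/2}$ with $\alpha'<2d$), and it is precisely this weakening that makes an elementary proof possible in the regime $\alpha\in(d,2d)$. The heuristic you append at the end --- two clusters of size $\gtrsim m^{\alpha/2}$ at distance of order $m$ connect with probability bounded away from zero, cf.\ Lemma \ref{largeConnectivity} --- is not a side remark: it is the engine of the actual proof in \cite{DeprezInhomogeneous}, which runs a multiscale recursion gluing clusters of size $\beta m^{\alpha/2}$ in sub-boxes into a cluster of size $\beta(2m)^{\alpha/2}$ in the doubled box, thereby never needing a Pisztora-type density statement. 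Reorganizing your argument around that recursion, rather than around a coarse-graining whose input you cannot supply, would bring it in line with a provable route.
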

Note that \cite[Lemma 9]{DeprezInhomogeneous} is proven for the exact power law distribution of the weights in \eqref{standardPowerLaw}. This is not a problem, since Lemma \ref{obs:increasing} implies that the result extends to a weight distribution satisfying \eqref{lowerBoundWeights} when $c\geq 1$. 
For $c<1$, and percolation parameter $\lambda'>0$, we observe that the model is equivalent to the case where $c=1$ and $\lambda=\lambda'c'^{-2/(\tau-1)}$, since if the law of $W$ satisfies \eqref{e:Wprime} for some $c>0$ and for $W'$ it holds for $w \ge 1$ that $\Prob(W'>w)=w^{-(\tau-1)}$, then $W\overset{d}{=}c^{-1/(\tau-1)}W'.$ Hence, we can scale the parameters such that $c=1$ and apply Lemma \ref{obs:increasing}.

\begin{lemma}[Berger, {\cite[Lemma 2.7]{BergerTransience}}]\label{sitebondlongrangeLemma}
Let $d\geq1$, $\alpha\in(d,2d)$ and $\lambda>0$. Consider the long-range percolation model on $\Zd$ in which every two vertices, $x$ and $y$, are connected by an open edge with probability \[1-\exp\left(-\frac{\lambda}{|x-y|^\alpha}\right),\] independently of other edges, and every vertex is good with probability $\mu<1$, independently of all other vertices. 

There exist $\mu_1<1$ and $\lambda_1$, such that if $\lambda\geq\lambda_1$ and $\mu\geq\mu_1$, the infinite cluster on the good vertices is transient.
\end{lemma}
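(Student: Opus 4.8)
\medskip

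The plan is to transport the multiscale renormalization of Proposition \ref{lemmaTransientLargeLambda} to the long-range percolation setting and to conclude via the transience criterion of Lemma \ref{renormalizedTransient}. As there, I would partition $\Zd$ into a nested hierarchy of $n$-stage boxes of side length $\prod_{k=1}^n D_k$, grouped $D_n^d$ at a time, and select bag sizes $C_n$ so that $\sum_n C_n^{-1} < \infty$ while all the failure probabilities below stay summable; the choice $D_n = 2(n+1)^2$, $C_n = (n+1)^{2d}$ from Proposition \ref{lemmaTransientLargeLambda} should again serve. A $0$-stage box is good exactly when its vertex is good, and an $n$-stage box is good when at least $C_n$ of its $(n-1)$-stage sub-boxes are good (condition (E)) and the bags carried by the good sub-boxes are suitably connected (condition (F)). Writing $L_n$ for the event that the $n$-stage box at the origin is good, the goal is $\Prob(\bigcap_n L_n) > 0$; since every $L_n$ is increasing, positive correlation reduces this to $\prod_n \Prob(L_n) > 0$, and the subgraph on the vertices good at every stage is then renormalized for $\{C_n\}$, so Lemma \ref{renormalizedTransient} delivers transience.

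The essential difference from Proposition \ref{lemmaTransientLargeLambda} is that long-range percolation has no heavy-tailed weights, hence no analogue of the large-weight condition (G) and no single ``dominant'' vertex per box. Instead I would exploit that a good $(n-2)$-stage box carries a \emph{bag} of at least $\prod_{k=1}^{n-2} C_k$ good vertices. The bound replacing \eqref{upperFc} is then a bag-to-bag connectivity estimate: two such bags inside a common $(n-1)$-stage box lie at $\ell_1$-distance at most $R := d\prod_{k=1}^{n-1}D_k$, so multiplying the independent edge-absence probabilities $\exp(-\lambda |a-b|^{-\alpha}) \le \exp(-\lambda R^{-\alpha})$ over all pairs $a, b$ of good vertices in the two bags shows that they fail to be joined by an open edge with probability at most
\[
\exp\left(-\lambda R^{-\alpha}\Big(\prod_{k=1}^{n-2}C_k\Big)^2\right).
\]
With the above choice of $D_n, C_n$ the quantity $(\prod_{k=1}^{n-2}C_k)^2 R^{-\alpha}$ equals $((n-1)!)^{4d - 2\alpha}$ times factors that decay only exponentially in $n$; because $\alpha < 2d$ the factorial exponent $4d - 2\alpha$ is strictly positive, so the whole product tends to infinity super-exponentially and this failure probability decays super-exponentially in $n$. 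A union bound over the at most $4^d(n+1)^{4d}$ pairs of sub-boxes then bounds $\Prob(F_n^c \mid E_n)$ summably, precisely as in \eqref{upperFc}. The estimate for $\Prob(E_n^c)$ is unchanged from \eqref{upperEc}, being a Chernoff bound on the binomial number of good $(n-1)$-stage sub-boxes.

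To close the recursion I would argue exactly as after \eqref{upperEFG}: fix $\ell_n = 1 - (n+1)^{-3/2}$ with $\prod_n \ell_n > 0$, use that $L_{n_1}$ depends only on finitely many vertices and edges to choose $\lambda_1$ large and $\mu_1 < 1$ close to $1$ so that $\Prob(L_{n_1}) \ge \ell_{n_1}$ at a suitable base scale $n_1$, and then propagate $\Prob(L_{n-1}) \ge \ell_{n-1}$ forward through the summable bounds on $\Prob(E_n^c)$ and $\Prob(F_n^c \mid E_n)$ to obtain $\Prob(L_n) \ge \ell_n$ for all $n > n_1$. The site-percolation parameter $\mu$ enters only through the base case and through the requirement that the bottom-level bags retain enough good vertices; taking $\mu_1$ close to $1$ keeps $\mathrm{Bin}(D_1^d, \mu)$ well above the threshold $C_1$ and so starts the recursion.

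I expect the main obstacle to be verifying that ``good at stage $n$'' genuinely forces the bag of good vertices to grow like $\prod_{k=1}^{n} C_k$, so that the bag-to-bag estimate can be reapplied at the next scale with the same constants. Unlike the weight condition (G), which pins down a single vertex of controlled weight, the bag bound is a volume statement that must be maintained inductively across all scales: checking that condition (E) propagates this lower bound cleanly — and hence that the super-exponential factor $((n-1)!)^{4d-2\alpha}$ overwhelms the distance penalty $R^{\alpha}$ uniformly for every $\alpha \in (d,2d)$ — is the delicate point of the argument.
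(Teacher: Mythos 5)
The paper does not actually prove this statement: it is quoted from Berger \cite[Lemma 2.7]{BergerTransience} (more precisely, from his transience argument for long-range percolation with $d<\alpha<2d$), so there is no in-paper proof to compare against. Your reconstruction is nevertheless essentially Berger's argument, and it is sound: it is the same multiscale scheme as Proposition \ref{lemmaTransientLargeLambda}, with the weight-based condition (G) replaced by a volume-based pair-counting bound. The point you flag as delicate is in fact unproblematic: if goodness at stage $n$ is defined by (E) (at least $C_n$ good sub-boxes) together with (F) (every pair of $(n-2)$-stage bags inside the $n$-stage box joined by an open edge), then by induction the connected bag of a good $n$-stage box contains at least $C_n\prod_{k=1}^{n-1}C_k=\prod_{k=1}^{n}C_k$ good vertices, and the between-box edges used in (F) are independent of the within-box information determining (E), so the conditional estimate for $\Prob(F_n^c\mid E_n)$ is legitimate. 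One small slip: the pairs of $(n-2)$-stage bags over which you take the union bound live in a common $n$-stage box, so the relevant distance is $R=d\prod_{k=1}^{n}D_k$ rather than $d\prod_{k=1}^{n-1}D_k$; this only changes your exponent by the polynomial factor $D_n^{-\alpha}=(2(n+1)^2)^{-\alpha}$ and leaves the super-exponential growth of $((n-1)!)^{4d-2\alpha}$ (valid precisely because $\alpha<2d$) intact. With that correction, the Chernoff bound for $\Prob(E_n^c)$ and the closing recursion via $\ell_n=1-(n+1)^{-3/2}$ go through exactly as after \eqref{upperEFG}.
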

\begin{lemma}\label{largeConnectivity}
Consider scale-free percolation with weight distribution satisfying \eqref{lowerBoundWeights}. 
Let $\mathcal{Q}_1$ and $\mathcal{Q}_2$ be two $N$-boxes that are $k$ boxes away from each other. Let $\beta>0$ be given. Moreover, assume that $\mathcal{Q}_1$ and $\mathcal{Q}_2$ contain connected components $\mathcal{C}_1$ and $\mathcal{C}_2$, respectively, of size at least $\beta N^{\alpha/2}$. 
Then
\[
\Prob\Big(\mathcal{C}_1\text{\emph{ connected by an open edge to }} \mathcal{C}_2
\;\Big|\; |\mathcal{C}_1|,|\mathcal{C}_2|\geq\beta N^{\alpha/2}\Big)
\geq 1-\exp\left(-\frac{\lambda(3d)^{-\alpha}\beta^2c^{2/{(\tau-1})}}{k^\alpha}\right).
\]
\end{lemma}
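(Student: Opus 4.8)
The statement asserts that two connected components $\Ccal_1, \Ccal_2$ in neighbouring $N$-boxes, each of size at least $\beta N^{\alpha/2}$, are connected by an open edge with probability bounded below uniformly. The guiding idea is that a component of size $\beta N^{\alpha/2}$ contains \emph{many} vertices, and the presence of \emph{at least one} open edge between the two components is an event whose failure requires \emph{all} pairs of potential edges to be closed. Since edges are open independently conditionally on the weights, I would express the failure probability as a product over pairs of vertices $u\in\Ccal_1$, $w\in\Ccal_2$ of the closure probabilities $\exp(-\lambda W_u W_w /|u-w|^\alpha)$, and then lower-bound the connection probability by bounding this product from above.

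\textbf{Key steps.} First I would use the independence of edges given the weights to write
\[
\Prob\big(\Ccal_1 \text{ not connected to } \Ccal_2 \mid \cdots\big)
= \prod_{u\in \Ccal_1}\prod_{w\in \Ccal_2} \exp\left(-\lambda \frac{W_u W_w}{|u-w|^\alpha}\right)
= \exp\left(-\lambda \sum_{u\in\Ccal_1}\sum_{w\in\Ccal_2}\frac{W_u W_w}{|u-w|^\alpha}\right).
\]
Next I would bound each term in the exponent from below. By assumption \eqref{lowerBoundWeights}, every weight satisfies $W_u, W_w \ge c^{1/(\tau-1)}$ almost surely, giving $W_u W_w \ge c^{2/(\tau-1)}$. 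For the distances, since $\Ccal_1 \subseteq \Qcal_1$ and $\Ccal_2 \subseteq \Qcal_2$ are in $N$-boxes that are $k$ boxes away, Lemma~\ref{maxDistance} yields $|u-w| \le 3dkN$ for every such pair, hence $|u-w|^{-\alpha} \ge (3dkN)^{-\alpha}$. Using $|\Ccal_1|, |\Ccal_2| \ge \beta N^{\alpha/2}$, the double sum is bounded below by
\[
\sum_{u\in\Ccal_1}\sum_{w\in\Ccal_2}\frac{W_u W_w}{|u-w|^\alpha}
\ge |\Ccal_1|\,|\Ccal_2|\, \frac{c^{2/(\tau-1)}}{(3dkN)^\alpha}
\ge (\beta N^{\alpha/2})^2 \frac{c^{2/(\tau-1)}}{(3d)^\alpha k^\alpha N^\alpha}
= \frac{\beta^2 c^{2/(\tau-1)}}{(3d)^\alpha k^\alpha}.
\]
The factors $N^\alpha$ cancel exactly, and substituting this into the exponential gives the stated bound.

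\textbf{Main obstacle.} I expect no genuine difficulty here; the lemma is a short deterministic estimate once the conditional edge-independence is invoked. The one point requiring minor care is that the product-of-exponentials factorization holds only conditionally on the weights, but since \emph{every} term is bounded below by the same weight-independent quantity $c^{2/(\tau-1)}$, the bound on the connection probability holds uniformly and the conditioning on the component sizes poses no complication. The cancellation of $N^\alpha$ between $N^{\alpha/2}\cdot N^{\alpha/2}$ in the numerator and $N^\alpha$ in the denominator is what makes the final bound depend only on $\beta$, $k$, $d$, and $c$, exactly as claimed.
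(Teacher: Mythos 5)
Your proof is correct and follows essentially the same route as the paper: bound each of the at least $\beta^2 N^\alpha$ potential inter-component edges as closed with probability at most $\exp(-\lambda(3d)^{-\alpha}c^{2/(\tau-1)}(kN)^{-\alpha})$ using the uniform weight lower bound and Lemma~\ref{maxDistance}, and multiply using conditional independence so that the $N^\alpha$ factors cancel. Your explicit remark that the factorization is conditional on the weights but each factor admits a weight-independent bound is a welcome clarification of a point the paper leaves implicit.
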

\proof
Since we assumed \eqref{lowerBoundWeights}, all weights are at least $c^{1/(\tau-1)}$. By Lemma \ref{maxDistance}, we get for arbitrarily chosen vertices $u\in\mathcal{C}_1, v\in\mathcal{C}_2$ that 
\[
\Prob(\{u,v\}\text{ is closed}) \leq \exp\left(-\lambda(3d)^{-\alpha}c^{2/{(\tau-1})}\frac{1}{kN^\alpha}\right).
\]
Since both clusters contain at least $\beta N^{\alpha/2}$ vertices, there are at least $\beta^2 N^\alpha$ possible edges. We obtain 
\begin{align*}
\Prob\Big(\mathcal{C}_1 &\text{ not connected by an open edge to } \mathcal{C}_2
\;\Big|\; |\mathcal{C}_1|,|\mathcal{C}_2|\geq\beta N^{\alpha/2}\Big)\\
&\leq \exp\left(-\lambda(3d)^{-\alpha}c^{2/{(\tau-1})}\frac{1}{kN^\alpha}\right)^{\beta^2 N^{\alpha}}\\
&=\exp\left(-\lambda(3d)^{-\alpha}\beta^2c^{2/{(\tau-1})}\frac{1}{k^\alpha}\right). \qed
\end{align*}

Note that this is the $\alpha\in(d,2d)$-counterpart of Lemma \ref{transientLemmaConnectivity}.
\proof[Proof of Theorem \ref{transientRandomThm}]
The previous lemmas readily imply the result for sufficiently large $\lambda$, and we are left to extend this to all $\lambda>\lambda_c$, which we achieve via coarse-graining. 

When $\gamma<2$, let $\lambda_0$ and $\mu_0$ be the values that we obtain from Proposition \ref{lemmaTransientLargeLambda}. To apply the proposition for all $\lambda > \lambda_c = 0$, we partition $\Zd$ into (hyper)cubes of side length $N$ (for some large $N$ to be determined below), which we call $N$-boxes. 
In every $N$-box we identify the vertex in it with maximum weight and call it the dominant vertex. We now choose $\beta$ large enough so that $\lambda\beta^2(3d)^{-\alpha}>\lambda_0$. 
Second, we call those $N$-boxes good that contain a vertex with weight at least $\beta N^{\alpha/2}$. We choose $N$ large enough so that the probability that an $N$-box is good is larger than $\mu_0$ using Lemma \ref{transientLemmaBigDegrees}. 
Lemma \ref{transientLemmaConnectivity} implies that the probability that the dominant vertices in two good $N$-boxes, being \emph{$k$ boxes away from each other}, are connected, is bounded from below by $\P_{(\lambda_0, W'')}(\{v_1, v_2\} \text{ is open})$,
where $v_1, v_2 \in \Zd$ such that $|v_1 -v_2|=k$, and where $W''_1,W''_2$ are i.i.d.\ distributed according to \eqref{standardPowerLaw}. Thus, the status of the edges between dominant vertices in good $N$-boxes stochastically dominates an SFP model on $\Zd$ with parameters $\alpha$, $\lambda_0$ and weight-law $W''$, combined with a site percolation of intensity $\mu_0$, exactly as described in Proposition \ref{lemmaTransientLargeLambda}.
We now apply Proposition~\ref{lemmaTransientLargeLambda} to obtain the result for the case $\gamma < 2$. 

The case $\alpha \in (d,2d)$ is analogous: When $\alpha\in(d,2d)$, let $\lambda_1$ and $\mu_1$ be the values that we obtain from Lemma \ref{sitebondlongrangeLemma}. To apply the lemma, we partition $\Zd$ into $N$-boxes again. Choose $\beta$ large enough, using Lemma~\ref{largeConnectivity}, such that two $N$-boxes being $k$-boxes away from each other, having clusters $\mathcal{C}_1$ and $\mathcal{C}_2$ with size at least $\beta N^{\alpha/2}$ are connected by an open edge between $\mathcal{C}_1$ and $\mathcal{C}_2$ with probability at least
$1-\exp\left(-{\lambda_1}/{k^\alpha}\right)$.
Call the $N$-boxes that contain a cluster of size at least $\beta N^{\alpha/2}$ the good boxes. Choose $N$ large enough so that the probability that an $N$-box is good is larger than $\mu_1$, using Lemma \ref{clustersizeTransient}. We thus find that the status of the edges between the dominant vertices of good $N$-boxes stochastically dominates an LRP model on $\Zd$ with independent edge probabilities $p_{x,y} = 1 - \exp(-\lambda_1/ |x-y|^{\alpha})$, combined with a site-percolation of intensity $\mu_1$. An application of Lemma \ref{sitebondlongrangeLemma} thus obtains the result for the case $\alpha \in (d,2d)$.

We conclude that in both cases we found a subgraph of the infinite cluster on which the random walk is transient, and hence the random walk is transient on the infinite cluster itself, cf.\ \cite[Section 9]{LectureNotesPeres}. \qed

\newpage
\subsection*{Recurrence proof}
We verify that we can apply the following lemma.

\begin{lemma}[Berger {\cite[Theorem 3.10]{BergerTransience}}]\label{lemma2Drecurrent}
Let $d=2, \alpha\geq2d = 4$ and let $(P_{i,j})_{i,j\in\mathbb{N}}$ be a family of probabilities, such that 
\[
\limsup_{i,j\rightarrow\infty} \frac{P_{i,j}}{(i+j)^{-4}}<\infty.
\]
Consider a shift invariant percolation model on $\mathbb{Z}^2$ on which the bond between $(x_1,y_1)$ and $(x_2,y_2)$ is open with marginal probability $P_{|x_1-x_2|,|y_1-y_2|}$. If there exists an infinite cluster, then this cluster is recurrent.
\end{lemma}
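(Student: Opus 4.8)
Since Lemma~\ref{lemma2Drecurrent} is quoted from Berger, the proof ultimately amounts to invoking \cite[Theorem 3.10]{BergerTransience}; nevertheless the natural self-contained route runs through the theory of electrical networks, and I sketch the plan now. The goal is to show that the effective resistance $R_{\mathrm{eff}}(0\to\infty)$ on the infinite cluster (with unit conductance on every open edge) is almost surely infinite, which by the Doyle--Snell/Lyons--Peres correspondence is equivalent to recurrence. Recurrence of the unique infinite cluster is invariant under the $\mathbb Z^2$-shift, so by ergodicity it has probability $0$ or $1$; it therefore suffices to establish the lower bound $R_{\mathrm{eff}}(0\to\infty)=\infty$ with positive probability.

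First I would introduce concentric cutsets. Writing $B_n=[-n,n]^2\cap\mathbb Z^2$, let $\Pi_n$ be the set of open edges with exactly one endpoint in $B_n$; each $\Pi_n$ separates $0$ from infinity in the cluster. The only probabilistic input required is the expected size of these cutsets, and this uses \emph{only} the marginal connection probabilities $P_{i,j}$ and not any independence of edges---which is precisely why the statement is phrased for general shift-invariant models. A first-moment computation gives
\[
\E|\Pi_n|\le\sum_{x\in B_n}\ \sum_{y\notin B_n}P_{|x_1-y_1|,|x_2-y_2|}
\ \lesssim\ \sum_{\ell\ge1}\ell\cdot\min\{\ell n,\,n^2\}\cdot\ell^{-4}
\ \lesssim\ n,
\]
where I grouped displacements by their $\ell_1$-length $\ell$, used that there are $\Theta(\ell)$ displacements of length $\ell$, that a length-$\ell$ edge can straddle $\partial B_n$ in at most $\lesssim\min\{\ell n,n^2\}$ positions, and the decay $P_{i,j}\lesssim(i+j)^{-4}$. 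The hypothesis $\alpha\ge 2d=4$ enters exactly here: it makes $\sum_\ell\ell^{-2}$ convergent, so the crossing count is of the same perimeter order $n$ as in the nearest-neighbour model; for $\alpha<2d$ this sum diverges and the scaling is destroyed. A second-moment (or ergodic) argument then upgrades this to a bound $|\Pi_n|\lesssim n$ holding along a positive-density set of scales.

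With \emph{disjoint} cutsets the Nash--Williams inequality would close the argument immediately, since $R_{\mathrm{eff}}(0\to\infty)\ge\sum_n|\Pi_n|^{-1}\gtrsim\sum_{n\ \mathrm{good}}(Cn)^{-1}=\infty$, the divergence being harmonic just as for $\mathbb Z^2$. The main obstacle is that the cutsets are \emph{not} disjoint: a single open edge of $\ell_1$-length $\ell$ lies in $\Theta(\ell)$ consecutive cutsets $\Pi_n$, so long edges are multiply counted; and one cannot sparsify the sequence of scales without destroying the harmonic divergence (positive density, not merely infinitely many good scales, is what the Nash--Williams sum needs). I would resolve this with the weighted form of Nash--Williams: for nonnegative weights $(m_n)$ with $\sum_{n:\,e\in\Pi_n}m_n\le1$ for every open edge $e$, testing an arbitrary unit flow against each $\Pi_n$ by Cauchy--Schwarz and summing yields $R_{\mathrm{eff}}(0\to\infty)\ge\sum_n m_n|\Pi_n|^{-1}$.

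The hard part is then to choose the weights so that the per-edge constraint holds for \emph{every} realised long edge while $\sum_n m_n|\Pi_n|^{-1}$ still diverges. This demands a quantitative sparsity estimate for long crossing edges---a first-moment bound showing that open edges of length $\ge\ell$ crossing scale $n$ are rare enough (again a consequence of $\alpha\ge 2d$) that their multiplicities can be absorbed at the cost of only a bounded factor. Concretely, I would classify edges dyadically by length and show that the over-counting contributed at each scale by long edges is summably small, so that the effective cutset sizes remain $\Theta(n)$ on a positive-density set of scales and the harmonic-type divergence survives. Assembling these pieces gives $R_{\mathrm{eff}}(0\to\infty)=\infty$ and hence recurrence; for the full argument I would defer to \cite[Theorem 3.10]{BergerTransience}.
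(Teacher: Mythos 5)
The paper does not actually prove this statement: it is imported verbatim from Berger \cite{BergerTransience}, so your sketch has to be judged on its own merits, and as it stands it has a genuine gap at precisely the point you label ``the hard part''. The first-moment bound $\E|\Pi_n|\lesssim n$ and the weighted Nash--Williams inequality $R_{\mathrm{eff}}\ge\sum_n m_n|\Pi_n|^{-1}$ (valid when $\sum_{n:e\in\Pi_n}m_n\le 1$ for every open edge) are both correct, but the weight construction is not a technicality one can defer. To get a divergent sum you need $m_n\asymp 1/n$ on a positive-density set of scales; an open edge of $\ell_1$-length $\ell$ with nearer endpoint at distance $a$ from the origin lies in roughly $\ell$ consecutive cutsets and then contributes $\sum_{n=a}^{a+\ell}m_n\asymp\log(1+\ell/a)$ to the per-edge constraint, which is unbounded for edges with $\ell\gg a$. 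Because $\alpha=2d$ is exactly critical, the expected number of open edges joining $B_a$ to the complement of $B_{Ka}$ is of order $K^{-2}$ \emph{per dyadic scale} $a$, which is not summable in $a$; such edges therefore cannot be ruled out, and the constraint genuinely fails for the natural weights. Your proposed repairs compound the problem: the ``second-moment'' and ``sparsity of long edges'' steps implicitly require independence of edges, whereas the lemma only controls the \emph{marginals} of an otherwise arbitrary shift-invariant measure; similarly, ``by ergodicity it has probability $0$ or $1$'' is not available, since shift-invariance does not imply ergodicity.

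Berger's actual proof is different and sidesteps all of these difficulties, which is worth recording. By the Dirichlet variational principle, the effective conductance from a vertex to infinity is an infimum of functionals that are \emph{linear} in the edge conductances, hence a concave function of them; Jensen's inequality therefore bounds the expected effective conductance of the random network (conductance $=$ indicator that the edge is open) by the effective conductance of the deterministic network on $\mathbb{Z}^2$ with conductances $c(x,y)=P_{|x_1-y_1|,|x_2-y_2|}\lesssim|x-y|^{-4}$. That deterministic network is recurrent: the associated walk has symmetric i.i.d.\ increments with $\Prob(|X|>r)\lesssim r^{-2}$ in two dimensions, which is recurrent by the Chung--Fuchs criterion. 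Hence the effective conductance of the random network vanishes almost surely and every infinite cluster is recurrent. Note that this argument uses only the marginal probabilities and no independence or ergodicity whatsoever, which is exactly why the lemma can be stated for general shift-invariant models. If you want a self-contained proof, this concavity-plus-Jensen route is the one to write down; the cutset route you outline would require substantially more work to salvage.
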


To bound the marginal probabilities we need a bound on the expectation of the product of the weights.
\begin{lemma}\label{upperboundProductWeights}
Assume that the weight-distribution satisfies \eqref{upperBoundWeights}. Let  $W_1,W_2$ be two independent copies of the random variable $W$. 

If $\tau>2$, there exists a constant $C>0$, such that for $u\geq 1$:
\[
\mathbb{E}\left[\left(W_1W_2/u\right)\wedge1\right]\leq C u^{-1}.
\]

If $\tau\leq 2$, then there exists a constant $C>0$, such that
\[
\mathbb{E}\left[\left(W_1W_2/u\right)\wedge1\right]\leq C \log(u)u^{-(\tau-1)}.
\]
\end{lemma}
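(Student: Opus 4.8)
The plan is to reduce both bounds to the tail behaviour of the product $Z:=W_1W_2$ and then recover the expectation by a layer-cake integration. The key — and hardest — step is to establish that the product inherits a power-law tail with an extra logarithmic factor: there exist $C_0>0$ and $s_0\ge 1$ such that $\Prob(W_1W_2>s)\le C_0\,s^{-(\tau-1)}\log s$ for all $s\ge s_0$, while trivially $\Prob(Z>s)\le 1$ for every $s$. To prove this I would condition on $W_1$ and apply \eqref{upperBoundWeights} together with the trivial bound to get $\Prob(W_2>s/W_1\mid W_1)\le (c(W_1/s)^{\tau-1})\wedge 1$, which yields
\[
\Prob(W_1W_2>s)\le \Prob\big(W_1\ge s\,c^{-1/(\tau-1)}\big)+\frac{c}{s^{\tau-1}}\,\E\big[W_1^{\tau-1}\indi_{\{W_1<s\,c^{-1/(\tau-1)}\}}\big].
\]
The first term is $O(s^{-(\tau-1)})$ by \eqref{upperBoundWeights}. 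The logarithm originates in the second term: the truncated moment $\E[W^{\tau-1}\indi_{\{W<a\}}]$ sits exactly at the borderline of integrability, and a further layer-cake estimate using \eqref{upperBoundWeights} gives $\E[W^{\tau-1}\indi_{\{W<a\}}]=O(\log a)$, so the second term is $O(s^{-(\tau-1)}\log s)$.

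With this tail bound I would use the identity $\E[(Z/u)\wedge 1]=\int_0^1\Prob(Z>tu)\,\dd t=u^{-1}\int_0^u\Prob(Z>s)\,\dd s$, valid for any nonnegative $Z$. Splitting the integral at $s_0$, the part over $[0,s_0]$ contributes at most $s_0/u=O(u^{-1})$, and on $[s_0,u]$ I insert the tail bound and evaluate $u^{-1}\int_{s_0}^u C_0\,s^{-(\tau-1)}\log s\,\dd s$; the two regimes of the lemma now separate according to the sign of $\tau-2$.

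When $\tau>2$ we have $\tau-1>1$, so $\int_{s_0}^\infty s^{-(\tau-1)}\log s\,\dd s$ converges to a finite constant, the $[s_0,u]$ contribution is $O(u^{-1})$, and combining with the $[0,s_0]$ part gives $\E[(W_1W_2/u)\wedge 1]=O(u^{-1})$. When $\tau<2$ the integrand is not integrable at infinity and the integral is dominated by its upper endpoint; an integration by parts gives $\int_{s_0}^u s^{-(\tau-1)}\log s\,\dd s=O(u^{2-\tau}\log u)$, so dividing by $u$ produces the claimed $O(u^{-(\tau-1)}\log u)$. The bounded range $u\in[1,s_0]$ is absorbed into $C$, since there $\E[(Z/u)\wedge 1]\le 1\le s_0u^{-1}$.

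The main obstacle is precisely the product-tail estimate, and within it the truncated moment $\E[W^{\tau-1}\indi_{\{W<a\}}]$: because \eqref{upperBoundWeights} pins the tail exponent at exactly $\tau-1$, this moment is borderline divergent and forces the genuine $\log$ factor that then propagates into both regimes. I would also flag that the exact boundary $\tau=2$ is delicate — there the tail integral behaves like $(\log u)^2$ rather than $\log u$ — so I would prove the second bound for $\tau<2$ and treat $\tau=2$ with a marginally more careful estimate (it is in any case covered by the $\gamma>2$ alternative in the hypotheses of Theorem~\ref{recurrentRandomWalk2D}).
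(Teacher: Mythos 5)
Your argument is correct where it applies, and it is close in spirit to the paper's for the case $\tau\le 2$, but with two genuine differences worth recording. First, the product-tail estimate $\Prob(W_1W_2>s)\le C_0 s^{-(\tau-1)}\log s$, which you derive from scratch by conditioning on $W_1$ and controlling the borderline truncated moment $\E[W^{\tau-1}\indi_{\{W<a\}}]=O(\log a)$, is not proved in the paper: it is imported wholesale from \cite[Lemma 4.3]{DeijfenScaleFree} (after a reduction via Lemma \ref{obs:increasing} to the exact power law \eqref{e:Wprime}). Your self-contained derivation is essentially that cited lemma's proof, so you have traded a citation for a page of work; both are fine. Second, for $\tau>2$ the paper does not go through the tail bound at all: it simply observes $\E[W]<\infty$ and uses $(x\wedge 1)\le x$ to get $\E[(W_1W_2/u)\wedge 1]\le \E[W]^2/u$ in one line, which is considerably more economical than running the convergent integral $\int_{s_0}^\infty s^{-(\tau-1)}\log s\,\dd s$ through your layer-cake identity (your identity $\E[(Z/u)\wedge 1]=u^{-1}\int_0^u\Prob(Z>s)\,\dd s$ is itself a cleaner packaging of the paper's decomposition $1-H(u)+u^{-1}\int_1^u v\,\dd H(v)$). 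Finally, your caveat about $\tau=2$ is not mere caution: the paper's own chain of inequalities asserts $\tfrac{1}{u}\int_1^u \log(v)v^{-(\tau-1)}\dd v\le \tfrac{C''}{u}\log(u)u^{-(\tau-2)}$, which at $\tau=2$ compares $(\log u)^2/u$ against $\log(u)/u$ and fails for large $u$; for an exact power law with $\tau=2$ the stated bound is in fact off by one logarithm. As you note, this is harmless where the lemma is used (in Theorem \ref{recurrentRandomWalk2D} the case $\tau\le2$ is paired with $\gamma>2$, and an extra $\log$ does not disturb $\log$-corrections times $k^{4-2\gamma}\to0$), but your proposal is the more honest account of the boundary case.
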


\proof
The proof for $\tau>2$ is straightforward. Observe that $\mathbb{E}[W]<\infty$, hence
\[\mathbb{E}\left[\left(W_1W_2/u\right)\wedge1\right]\leq \mathbb{E}[W]^2/u.\]

For $\tau\leq 2$, we prove the claim for weights that satisfy \eqref{e:Wprime} for some $c\geq 1$. Lemma \ref{obs:increasing} then implies that the claim holds for weights that satisfy \eqref{upperBoundWeights}.

Let $H(u)$ denote the distribution function of $W_1W_2$. From \cite[Lemma 4.3]{DeijfenScaleFree} we have for some $C'>0$ that
\[
1-H(u)\leq C'\log(u)u^{-(\tau-1)}.
\]
By 
\[
\int\limits_1^uvdH(v)\leq \int\limits_1^u (1-H(v))dv,
\]
we obtain the result
\[\begin{split}
\mathbb{E}\left[\left(W_1W_2/u\right)\wedge1\right]&=1-H(u)+\frac{1}{u}\int_1^u vdH(v)\\
&\leq 1-H(u)+\frac{1}{u}\int\limits_1^u (1-H(v))dv\\
&\leq C'\log(u)u^{-(\tau-1)}+\frac{C'}{u}\int\limits_1^u\log(v)v^{-(\tau-1)}dv\\
&\leq C'\log(u)u^{-(\tau-1)}+\frac{C''}{u}\log(u)u^{-(\tau-2)}\\
&\leq C \log(u)u^{-(\tau-1)}. \qed
\end{split}
\]

\proof[Proof of Theorem \ref{recurrentRandomWalk2D}]
Observe that the scale-free percolation measure $\Prob_{(\lambda, W)}$ is indeed shift invariant.
 According to Lemma \ref{lemma2Drecurrent}, we need to prove that
\[
\limsup_{k\rightarrow\infty} k^4 \Prob\left(\{(0,0),(i,j)\}\text{ is open}\right)<\infty
\]
whenever $|i|+|j|=k$. For convenience, we treat only $(i,j)=(k,0)$, the other cases follow analogously. 
Lemma \ref{upperboundProductWeights} and the bound $1-\exp(-x)\leq x$ give 
\begin{align*}
\limsup_{k\rightarrow\infty} k^4 \Prob\left(\{(0,0),(k,0)\}\text{ is open}\right) &= \limsup_{k\rightarrow\infty} k^4\mathbb{E}\left[1-e^{-\lambda\frac{W_{(k,0)}W_{(0,0)}}{k^\alpha}}\right]\\
&\leq \limsup_{k\rightarrow\infty} k^4\mathbb{E}\left[1-e^{-\lambda\frac{W_{(k,0)}W_{(0,0)}}{k^\alpha}}\right]\\
&\leq \limsup_{k\rightarrow\infty} k^4\mathbb{E}\left[\left(\lambda\frac{W_{(k,0)}W_{(0,0)}}{k^\alpha}\right)\wedge1\right].
\end{align*}
For $\tau>2$, recall that $\alpha\geq 2d = 4$, and therefore
\begin{equation*}
\limsup_{k\rightarrow\infty} k^4\mathbb{E}\left[\left(\lambda\frac{W_{(k,0)}W_0}{k^\alpha}\right)\wedge1\right] \leq \limsup_{k\rightarrow\infty}C \lambda k^{4-\alpha}\leq C \lambda<\infty.
\end{equation*}
For $\tau\leq 2$ and $\gamma>2$, 
\begin{align*}
\limsup_{k\rightarrow\infty} k^4\mathbb{E}\left[\lambda\frac{W_{(k,0)}W_0}{k^\alpha}\wedge1\right]&\leq \limsup_{k\rightarrow\infty}k^4C \lambda^{\tau-1} \log(k^\alpha/\lambda)k^{-\alpha(\tau-1)}\\
&\leq \limsup_{k\rightarrow\infty} C'\log(k)k^{4-\alpha(\tau-1)}\\
&=\limsup_{k\rightarrow\infty} C' \log(k)k^{4-2\gamma}\\
&=0. \qed
\end{align*}

\section{Hierarchical clustering: proof of Theorem \ref{thm:trees}}\label{SectionClusters}
For the proof of Theorem \ref{thm:trees} we largely use the same two steps as for the proof of Theorem~\ref{transientRandomThm}. First we prove the result for large values of $\lambda$ on an SFP model combined with an i.i.d.\ Bernoulli site percolation, but only along a sequence $\{m_n\}_{n=1}^\infty$ diverging to infinity. Then, we prove the result in this SFP model for all sufficiently large $m$. Lastly, by a coarse graining argument, we extend the result to hold for all $\lambda>\lambda_c=0$. 

We assume the weights satisfy \eqref{e:Wprime}. Lemma \ref{obs:increasing} then implies that the claim holds for weights that satisfy \eqref{lowerBoundWeights}.
We start with stating and proving a proposition in which we consider a site-percolated version of SFP.
\begin{proposition}\label{lemLargestCluster}
Consider SFP on $\Zd$ with $d\ge1$, $\gamma<2$, and weight distribution that satisfies \eqref{standardPowerLaw}.
Independently of this, perform an i.i.d.\ Bernoulli site percolation on the vertices of $\Zd$, colouring a vertex ``green'' with probability $\mu \in (0,1]$. 
Denote by $\Ccal_{\lambda,\mu}$ the (unique) infinite subgraph of the infinite scale-free percolation cluster induced by the green vertices. We call this the site-percolated SFP.  

There exist constants $\mu_1 <1,\lambda_1>0,K, \rho>0$ and $n_2 \in \mathbb{N}$, and sequence $\{m_n\}_{n=1}^\infty$ with $m_n \in \mathbb{N}$, such that for all $\xi$ satisfying 
\begin{equation}\label{eqXiDef}
	 0<\xi<\min\left(\frac{d(2-\gamma)}{\tau+1},\frac{d}{2}\left(\tau+2-\sqrt{(\tau+2)^2-4(2-\gamma)}\right)\right),
\end{equation}
the following hold:
\begin{enumerate}
\item
The probability that the site-percolated SFP configuration with parameters $\mu \ge \mu_1$ and $\lambda \ge \lambda_1$ contains a $(0, m_n, \rho,K)$-hierarchically clustered tree inside the box $[0,m_n-1]^d$, is bounded from below by 
\[
1-3\exp\left(-\rho m_n^\xi \right).
\]
\item Site-percolated SFP with $\lambda \ge \lambda_1$ and $\mu \ge \mu_1$ has an infinite component $\mathcal{C}_{\lambda, \mu}$ almost surely. $\mathcal{C}_{\lambda, \mu}$ contains a.s.\ an infinite, connected, cycle-free subgraph $\mathcal{T}_\infty$ such that 
removing an arbitrary edge yields a finite and infinite connected component and there exist $x\in\Zd$ and $m\ge 1$ such that the finite connected component is an $(x,m,\rho,K)$-hierarchically clustered tree.
\end{enumerate}
\end{proposition}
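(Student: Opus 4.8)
The plan is to run the multiscale renormalization of Proposition~\ref{lemmaTransientLargeLambda}, but with box sizes chosen so that the hierarchy is \emph{shallow}. I would let the side lengths $D_k$ grow doubly-exponentially in $k$, so that $m_n:=\prod_{k=1}^nD_k$ obeys $m_{k-1}=m_k^{\theta}$ for a fixed $\theta\in(0,1)$ and $D_k^d=m_k^{d(1-\theta)}$; then reaching side length $m_n$ takes only $n\asymp\log\log m_n$ scales. Simultaneously I would take the bag sizes within a summable factor of $D_k^d$, namely $C_k=\lceil D_k^d(1-\delta_k)\rceil$ with $\sum_k\delta_k<\infty$, so that a positive fraction $\prod_k(C_k/D_k^d)>0$ of sub-boxes survives at every scale. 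Exactly as in the transience proof, a scale-$n$ box is declared \emph{good} through three internal conditions: (E) it contains at least $C_n$ good scale-$(n-1)$ sub-boxes; (F) its maximal-weight (\emph{dominant}) vertex is joined by an open edge to the dominant vertex of every good sub-box; and (G) its dominant vertex has weight at least $u_n:=m_n^{a}$. At the bottom scale a box is good when a heavy vertex is joined to a positive fraction of the green vertices in it, forming a dense star.

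Inside a good scale-$n$ box I would build the hierarchically clustered tree by induction, joining the scale-$n$ dominant vertex to the root of the $(n-1)$-HCT in each good sub-box and using the star at scale $1$. The four properties of Definition~\ref{def:hct} then follow from the construction. \emph{Positive density} comes from $\prod_k(C_k/D_k^d)>0$ times the base-scale star density, since a good scale-$n$ box contains at least $\prod_{k=2}^nC_k$ good scale-$1$ boxes. The \emph{ultra-small world} bound holds because the tree has depth at most $n\asymp\log\log m_n$, so $\mathrm{diam}\le 2n\le K\log\log m_n$ for $K$ large enough in terms of $\theta$. \emph{Ordered weights} is immediate: the dominant vertices have strictly decreasing weights down the scales and the global root is the maximum. \emph{Spatial clustering} is precisely the nested box hierarchy---deleting the edge from a box's root to a sub-box root splits off the $(n-1)$-HCT supported on that sub-box (an HCT by induction), and the remainder lives in the complementary sub-boxes, hence misses that sub-box.

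For part~(1) I would, given a target $\xi$ obeying \eqref{eqXiDef}, tune $\theta$ and the exponent $a$, estimate the three failure events, and take a union bound (yielding the factor $3$). Event (E) fails with probability at most $\exp(-c\,m_n^{d(1-\theta)})$ by a Chernoff bound on $\mathrm{Bin}(D_n^d,\Prob(L_{n-1}))$, forcing $\xi\le d(1-\theta)$; event (F) fails with probability at most $D_n^d\exp(-\lambda u_nu_{n-1}/(dm_n)^\alpha)$ by Lemma~\ref{transientLemmaConnectivity} and a union bound over sub-boxes, forcing $\xi\le a(1+\theta)-\alpha$; and event (G) fails with probability at most $\exp(-m_n^{\,d-a(\tau-1)})$, since the box holds $m_n^d$ vertices, forcing $\xi\le d-a(\tau-1)$. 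Simultaneous solvability of these three inequalities is exactly \eqref{eqXiDef}: balancing (F) against (G) over $a$ gives the curve $\xi\le d(1+\theta-\gamma)/(\tau+\theta)$, whose supremum as $\theta\uparrow 1$ is the first term $d(2-\gamma)/(\tau+1)$, while intersecting this curve with (E) over $\theta$ solves a quadratic whose relevant root is the second, square-root term. Feeding these bounds into the recursion of Proposition~\ref{lemmaTransientLargeLambda} should upgrade its ``positive probability'' conclusion to the self-improving estimate $\Prob(L_n)\ge 1-3\exp(-\rho m_n^\xi)$ valid for $n\ge n_2$ once $\lambda\ge\lambda_1$, $\mu\ge\mu_1$. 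I expect this three-way balance---a shallow hierarchy forces enormous boxes, in which the dominant connectors must be rare heavy vertices, while positive density forces almost every sub-box to be good---to be the main obstacle.

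Finally, for part~(2) I would fix the nested boxes $B_n=[0,m_n-1]^d$ and set $G_n:=\{B_n\text{ is a good scale-}n\text{ box}\}$. Part~(1) gives $\sum_n\Prob(G_n^c)\le\sum_n 3\exp(-\rho m_n^\xi)<\infty$ because $m_n\to\infty$ doubly-exponentially, so Borel--Cantelli yields $G_n$ for all $n\ge N$ almost surely. For such $n$ the HCT of $B_{n-1}$ is, by construction, the subtree of the HCT of $B_n$ below the edge joining their roots, so the trees nest and $\mathcal T_\infty:=\bigcup_{n\ge N}T_n$ is an infinite, connected, cycle-free subgraph; in particular the green-induced graph has an infinite (a.s.\ unique) component $\mathcal C_{\lambda,\mu}$ with $\mathcal T_\infty\subseteq\mathcal C_{\lambda,\mu}$. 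Any edge of $\mathcal T_\infty$ lies in some finite $T_n$, and the spatial-clustering property of $T_n$ shows that deleting it leaves an $(x,m,\rho,K)$-HCT on the finite side and the infinite remainder on the other, which is the claim.
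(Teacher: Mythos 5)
Your proposal follows essentially the same route as the paper's proof: the same doubly-exponential multiscale renormalization with three goodness conditions (density of good sub-boxes, a heavy dominant vertex, connectivity of dominant vertices to it), the same hierarchical tree construction verifying Definition~\ref{def:hct}, and the same three-way exponent balance that reproduces exactly both terms of \eqref{eqXiDef} (the paper encodes your $\theta$ as $1/\zeta$ and your exponent $a$ as $(d-\xi')/(\tau-1)$, and makes your summable-defect bag sizes precise via $a_n=\Prob(L_{n-1})(1-\sqrt{2}D_n^{-d/2}\prod_k D_k^{\xi'/2})$). The only deviation is in closing part (2), where you use Borel--Cantelli along the nested boxes at the origin while the paper shows $\prod_n\Prob(L_n)>0$ via positive correlation; both are standard and yield the claim.
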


\proof
Let $D_1$ be a large integer to be determined later, and let $\{a_n\}_{n=1}^\infty$, with ${a_n \in (0,1]}$, be a sequence also to be determined later, such that
\begin{equation}\label{constraintsAk1}
\rho:=\prod_{n=1}^\infty a_n > 0.
\end{equation}
Let 
\[
	\xi'\in \left(\xi, \min\left(\frac{d(2-\gamma)}{\tau+1},\frac{d}{2}\left(\tau+2-\sqrt{(\tau+2)^2-4(2-\gamma)}\right)\right)\right).
\] 
The bound $\xi'<d\,\frac{2-\gamma}{\tau+1}$ implies 
\begin{equation}
	\label{eqDefZeta}
	\zeta:=\frac{d-\xi'}{(\alpha+\xi')(\tau-1)-(d-\xi')}>1.
\end{equation}
For all $n \ge 2$, let $D_n :=\left \lceil D_{n-1}^{\zeta} \right \rceil$,
so that we have the telescoping product
\begin{equation}\label{formsDn}
D_n \ge D_{n-1}^{\zeta} \ge D_1\prod_{k=1}^{n-1}D_k^{\zeta-1} \ge D_1^{\zeta^{n-1}},
\end{equation}
and let
\[
u_n :=\prod_{k=1}^n D_k^{\frac{d-\xi'}{\tau-1}} \qquad \text{ and } \qquad C_n := a_n D_n^d.
\]

We give a sequence of graphs for $\Zd$ by the same procedure as in Proposition~\ref{lemmaTransientLargeLambda}. 
We call the vertices the $0$-stage boxes.
		We partition the lattice $\Zd$ into boxes of side-length $D_1$, so that each box contains $D_1^d$ vertices, and call these the $1$-stage boxes.
	Iteratively, we group $D_{n-1}^d$ $(n-1)$-stage boxes into $n$-stage boxes, so that the $n$-stage boxes form a covering of $\Zd$ by translates of $[0,\prod_{k=1}^n D_k -1]^d$.

We call a $0$-stage box ``good'' if the associated vertex is green, and we call this vertex ``$0$-dominant''.
For every stage $n \geq 1$, we define rules for a box to be ``good'' or ``bad'' and for a vertex to be ``$n$-dominant'' depending only on the weights $W_x$ and colours of the vertices and on the edges of $\mathcal{C}_{\lambda,\mu}$ inside the box. This implies that disjoint boxes are good or bad independently of each other.

For $n\geq 1$ we inductively define that an $n$-stage box is good if the following three conditions hold:
\begin{enumerate}
\item[(E)] At least $C_n$ of the $(n-1)$-stage boxes it contains are good. 
\item[(F)] The maximum weight $(n-1)$-dominant vertex in one of its good $(n-1)$-stage boxes has weight at least $u_{n}$. Call this vertex $n$-dominant. (A vertex can be dominant for multiple stages.)
\item[(G)] All $(n-1)$-dominant vertices in the good $(n-1)$-stage boxes are connected to the $n$-dominant vertex by an edge in $\Ccal_{\lambda,\mu}$.
\end{enumerate} 

\begin{figure}
\centering
\includegraphics[keepaspectratio,width = .6\textwidth]{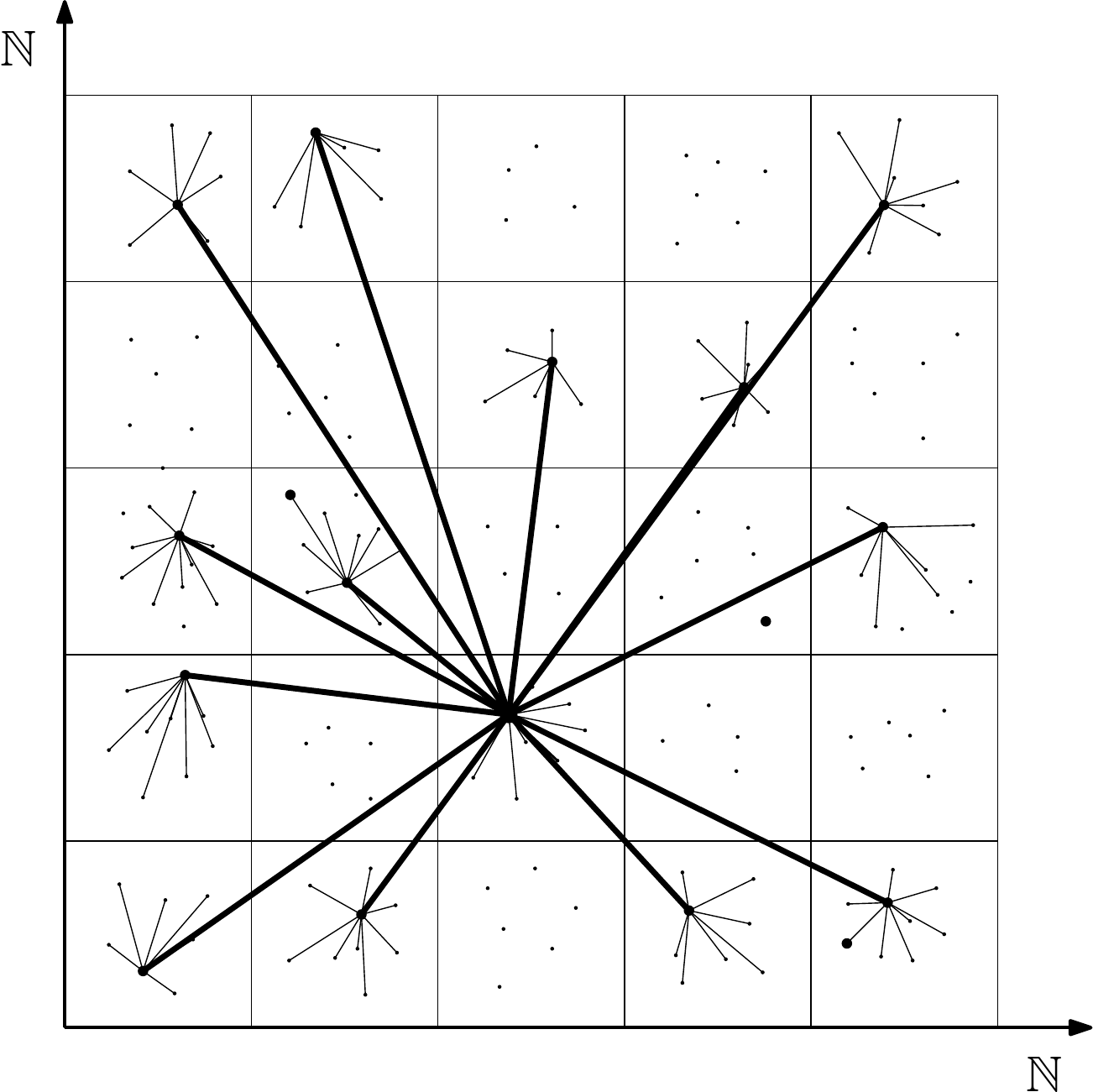}
\caption{Sketch of the renormalization in Proposition \ref{lemLargestCluster} for $d=2$.}\label{renormalizedFiniteBoxes}
\end{figure}

Define $E_n(v), F_n(v)$ and $G_n(v)$ to be the events that respectively (E), (F) and (G) hold for the $n$-stage box containing the vertex $v$. Denote by $L_n(v)$ the event that the $n$-stage box containing $v$ is good, i.e., $L_n(v) = E_n(v) \cap F_n(v) \cap G_n(v)$. To simplify notation we write $E_n:=E_n(0), F_n:=F_n(0), G_n:=G_n(0),$ and $L_n := L_n(0)$. 

On the event that $L_n$ occurs, we can construct a graph by the following procedure:
\begin{enumerate}
	\item Start with the set of all $0$-dominant vertices inside the $n$-stage box containing $v$. 
	\item For every $i \in \{0, \dots, n-1\}$ connect each $i$-dominant vertex $v$ to the $(i+1)$-dominant vertex inside the $(i+1)$-stage box that contains $v$ (unless this creates a self-loop).
\end{enumerate}
We  sequence $m_k$ and obtain a bound:
\begin{equation}\label{e:mprimedef}
m_n:=\prod_{k=1}^n  D_k \ge D_1^{\frac{\zeta^n-1}{\zeta-1}}.
\end{equation}
We claim that the constructed connected component of the $n$-dominant vertex in the $n$-stage box of $v$ is a tree that satisfies the conditions given in Definition \ref{def:hct}. Since \eqref{constraintsAk1} holds, the event $L_k$ implies that the intersection of the site-percolated SFP configuration and the cube $[0,m_k-1]^d$ contains a tree with at least
\[	\prod_{k=1}^n C_k = 
	\prod_{k=1}^n a_k  D_k^d 
	\ge\rho {m}_n^d   \]
vertices, which verifies Condition~(1) of Definition \ref{def:hct}. We obtain Condition~(2) for some $K>0$ because the box-size $m_n$ as well as the number of vertices in $[0,m_n-1]^d$ both grow double exponentially fast in $n$. Conditions~(3) and (4) follow straightforwardly from the construction. 
We therefore conclude that Proposition \ref{lemLargestCluster}(1) follows if we show that
\begin{equation}\label{eq Ln condition}
	\Prob(L_n^c)\leq 3\exp\left(-\rho\prod_{k=1}^nD_k^\xi\right),
\end{equation}
and that Proposition \ref{lemLargestCluster}(2) follows if we show that
\[
 	\Prob\left(\bigcap_{n=1}^\infty L_n\right)>0.
\]
The events $L_n$ are positively correlated, hence it is {sufficient} for assertion (2) of the theorem that 
$\prod_{n=1}^\infty \Prob(L_n)>0$,
which follows from \eqref{eq Ln condition}.  It thus remains to prove \eqref{eq Ln condition}.
\medskip

We bound
\begin{equation}\label{clusterLc}
\Prob(L_n^c)\leq \Prob(E_n^c)+\Prob(F_n^c \mid E_n)+\Prob(G_n^c \mid E_n\cap F_n),
\end{equation}
and analyze the terms separately.

We start with proving two bounds on $\Prob(F_n^c \mid E_n)$. The conditioning on $E_n$  gives that the sum of cluster sizes in the good $(n-1)$-stage boxes is at least $\prod_{k=1}^n C_k \ge  \prod_{k=1}^n a_k D_k^d$. Using Lemma \ref{maximumWeight} with $K_1=1, K_2=u_n$, the definition of $u_n$ yields that
\begin{equation}\label{clusterFc}
\Prob(F_n^c \mid E_n)\leq \exp\left(-u_n^{-(\tau-1)}\prod_{k=1}^na_kD_k^d\right)=\exp\left(-\prod_{k=1}^n a_k D_k^{\xi'}\right).
\end{equation}
Similarly, the conditioning on $E_n$ gives that there are at least $C_n$ vertices with weight at least $u_{n-1}$. Using Lemma \ref{maximumWeight} with $K_1=u_{n-1}, K_2=u_n$, the definition of $u_n$ yields that
\begin{equation}\label{clusterFc2}
\Prob(F_n^c \mid E_n)\leq \exp\left(-a_nD_n^d \left(D_n^{-\frac{d-\xi'}{\tau-1}}\right)^{\tau-1}\right) = \exp\left(-a_nD_n^{\xi'}\right).
\end{equation}
{Note that it is not a-priori clear which of these two bounds is better, since this may depend on the choice of $D_1, n, \tau, \xi'$ and the sequence $(a_k)$. Therefore we use both bounds.}

We move on to $\Prob(G_n^c \mid E_n \cap F_n)$.
For $n\ge2$ define  
\begin{equation}\label{eqDefBetaN}
\beta_n := u_n u_{n-1}\prod_{k=1}^n D_k^{-\alpha}.
\end{equation}
Since $D_n=\left\lceil D_{n-1}^\zeta\right\rceil$ by definition, it follows that
\[
D_{n-1} \ge \left(\frac{1}{2}D_n\right)^{1/\zeta}.
\]
Substituting the values of $u_n,u_{n-1},\zeta$, and \eqref{formsDn} gives 
\begin{align*}
\frac{\beta_n}{\beta_{n-1}}
&=\frac{u_n}{u_{n-2}}D_n^{-\alpha}
=D_n^{-\alpha+\frac{d-\xi'}{\tau-1}}D_{n-1}^{\frac{d-\xi'}{\tau-1}}
\ge \left(\frac{1}{2}\right)^{1/\zeta}D_n^{-\alpha+\frac{d-\xi'}{\tau-1}}\left(D_n^{\frac{(\alpha+\xi')(\tau-1)-(d-\xi)}{d-\xi'}}\right)^{\frac{d-\xi'}{\tau-1}}\\
&{=} \left(\frac{1}{2}\right)^{1/\zeta}D_n^{-\alpha+\frac{d-\xi'}{\tau-1}} D_n^{\alpha+\xi'-\frac{d-\xi'}{\tau-1}}
= \left(\frac{1}{2}\right)^{1/\zeta}D_n^{\xi'}.
\end{align*}
It follows that for some $c>0$,
\[
\beta_n \ge c\left(\frac{1}{2}\right)^{(n-1)/\zeta}\prod_{k=1}^nD_k^{\xi'}.
\]
The distance between two $(n-1)$-dominant vertices in the same $n$-stage box is at most $ d\prod_{k=1}^nD_k$.
There are at most $D_n^d$ good $(n-1)$-dominant vertices (having weight at least $u_{n-1}$). Recalling \eqref{eqDefBetaN}, by the union bound and the conditioning on $E_n$ and $F_n$, we thus obtain that
\begin{align}
\Prob(G_n^c \mid E_n \cap F_n)&\leq D_n^d\exp\left(-\lambda d^{-\alpha}u_nu_{n-1}\prod_{k=1}^n D_k^{-\alpha}\right)\nonumber\\
&= D_n^d\exp\left(-\lambda d^{-\alpha}\beta_n\right)\\
&\le \exp\left(d\log(D_n)-c\lambda d^{-\alpha}\left(\frac{1}{2}\right)^{(n-1)/\zeta}\prod_{k=1}^nD_k^{\xi'}\right).\nonumber
\end{align}
Since $D_n$ {grows} double exponentially fast and $\xi'>\xi>0$, we obtain for $n$ sufficiently large {(larger than $n_0$, say)}, that
\begin{equation}\label{clusterGc}
\Prob(G_n^c \mid E_n \cap F_n)\leq \exp\left(-\prod_{k=1}^nD_k^{\xi}\right).
\end{equation}

\medskip
We move on to $\Prob(E_n^c)$. All $(n-1)$-stage boxes are good independently of each other with probability $\Prob(L_{n-1})$. 
Let $X\sim\text{Bin}(D_n^d,\Prob(L_{n-1}))$. Then, 
\[
\Prob(E_n^c)=\Prob(X<C_n)=\Prob(X<a_nD_n^d).
\] 
As in \eqref{upperEc}, we apply Chernoff's bound and obtain
\[
\Prob(E_n^c)\leq\exp\left(-\frac{\left(\Prob(L_{n-1})-a_n\right)^2}{2\Prob(L_{n-1})}D_n^d\right),
\]
whenever ${0 < }a_n < \Prob(L_{n-1})$.
We now choose 
\begin{equation}\label{eqDefAn}
a_n:=\Prob(L_{n-1})\left(1-\sqrt{2}D_n^{-d/2}\prod_{k=1}^n D_k^{\xi'/2}\right).
\end{equation}
We will show below that $a_n >0$ for all $n \ge 1$ and that $\prod_{n=1}^\infty a_n >0$, as required by \eqref{constraintsAk1}. Assuming these inequalities we have
\begin{equation}\label{clusterEc}
\Prob(E_n^c)\leq\exp\left(-\Prob(L_{n-1})\prod_{k=1}^nD_k^{\xi'}\right), 
\end{equation}
so that using $\Prob(L_{n-1})>a_n>\rho$, $\xi'>\xi$, \eqref{clusterFc},  and \eqref{clusterGc} yields
\begin{align}\label{boundLn2}
\Prob(L_n^c)&\leq \exp\left(-\Prob(L_{n-1})\prod_{k=1}^nD_k^{\xi'}\right)+\exp\left(-\prod_{k=1}^n a_k\prod_{k=1}^n D_k^{\xi'}\right)+\exp\left(-\prod_{k=1}^nD_k^{\xi}\right)\nonumber\\
&\leq 3\exp\left(-\prod_{k=1}^n a_k\prod_{k=1}^n D_k^\xi\right), 
\end{align}
which gives the desired bound \eqref{eq Ln condition}.
For later reference we note that if instead we apply 
\eqref{clusterLc}, \eqref{clusterFc2}, and \eqref{clusterGc},  
then we obtain for $n$ sufficiently large 
\begin{equation}\label{boundLn1}
\Prob(L_n^c)\leq \exp\left(-\Prob(L_{n-1})\prod_{k=1}^nD_k^{\xi'}\right)+\exp\left(- a_n D_n^{\xi'}\right)+\exp\left(-\prod_{k=1}^nD_k^\xi\right).\end{equation}
\medskip

All that remains is to show that {$a_n >0$ for all $n \ge 1$ and that $\prod_{n=1}^\infty a_n >0$.} For positivity of $a_n$, it is by \eqref{eqDefAn} sufficient to show that for some $b>0$ and $D_1$ sufficiently large
\begin{equation}\label{dnDoubleExp}
D_n^{-d/2}\prod_{k=1}^n D_k^{\xi'/2} < \frac{1}{2\sqrt{2}}D_1^{-b\zeta^{n}}. 
\end{equation}
Since $\xi'>0, \zeta>1$,  $\prod_{k=1}^nD_k\leq \big(D_{n+1}/D_1)^{1/(\zeta-1)}$ and $D_{n+1}=\left\lceil D_n^\zeta\right\rceil\leq 2D_n^\zeta$ by \eqref{formsDn}, we obtain
\begin{align*}
D_n^{-d}\prod_{k=1}^n D_k^{\xi'} & \le D_n^{-d}\left(\frac{D_{n+1}}{D_1}\right)^{\xi'/(\zeta-1)}
= D_n^{-d}\left(\frac{\left\lceil D_{n}^\zeta\right\rceil}{D_1}\right)^{\xi'/(\zeta-1)}
 \le \left(\frac{2}{D_1}\right)^{\xi'/(\zeta-1)}D_n^{-d+\frac{\xi'\zeta}{(\zeta-1)}}.
\end{align*}
We show that 
\[
-d+\xi'\frac{\zeta}{\zeta-1} < 0.
\]
By definition $\xi'<\frac{d}{2}\left(\tau+2-\sqrt{(\tau+2)^2-4(2-\gamma)}\right)$. So we derive, after rearranging terms and dividing by $\sqrt{d}$, that
\[
-\frac{1}{\sqrt{d}}\xi'  + \frac{\sqrt{d}}{2}(\tau+2)  > \frac{\sqrt{d}}{2}\sqrt{(\tau+2)^2 - 4 (2-\gamma)}.
\]
Squaring and substituting $\gamma=\alpha(\tau-1)/d$ yield
\[
\frac1d\xi'^2-\xi'(\tau+2)+2d-\alpha(\tau-1)>0,
\]
so that after rearranging
\[
\frac1d(d-\xi')^2 > (\alpha+\xi')(\tau-1)-(d-\xi').
\]
Hence, we obtain the result after dividing by $(d-\xi')$ and, using \eqref{eqDefZeta}, substituting $\zeta$:
\[
1 - \frac{\xi'}{d} > \frac1\zeta, \] which can be inverted to give \[-d+\xi'\frac{\zeta}{\zeta-1} < 0.
\]
Hence,  there exists a constant $b>0$ such that, when we choose $D_1$ sufficiently large,
\[
D_n^{-d/2}\prod_{k=1}^n D_k^{\xi'/2} < \frac{1}{2\sqrt{2}}D_n^{-b\zeta}.
\]
By \eqref{formsDn} we have $D_n\geq D_1^{\zeta^{n-1}}$, so \eqref{dnDoubleExp} follows, and we may conclude that $a_n>0$.
\medskip

Observe that $\prod_{k=1}^\infty a_k>0$ if and only if $\prod_{k=1}^\infty \Prob(L_k) > 0$, since $a_n$ approaches $\Prob(L_n)$ double exponentially fast. Moreover, combining \eqref{eqDefAn} and \eqref{dnDoubleExp} gives
\[
a_n\geq \frac{1}{2}\Prob(L_{n-1}),
\]
so that, using \eqref{boundLn1}, we can bound
\begin{equation}\label{exp:clustLn}
\Prob(L_n^c)\leq \exp\left(-\Prob(L_{n-1})\prod_{k=1}^nD_k^{\xi'}\right)+\exp\left(- \frac{1}{2}\Prob(L_{n-1}) D_n^{\xi'}\right)+\exp\left(-\prod_{k=1}^nD_k^\xi\right).
\end{equation}
Define the sequence \[\ell_n:=1-(n+1)^{-3/2},\] and observe that
\begin{equation}\label{prodPositive2}
\prod_{n=1}^\infty \ell_n>0.
\end{equation}
For any fixed $n_1>n_0$, we can find $\lambda_0>0$ and $\mu_0<1$ such that 
 $\Prob(L_{n_1})\geq \ell_{n_1}$, because $L_{n_1}$ depends only on the weights and edges inside a \emph{finite} box.
Since $D_k$ grows double exponentially fast, we further bound  \eqref{exp:clustLn} for all $n > n_1$ by 
\begin{align}\label{recursiveBound2}
\Prob(L_n^c)&\leq \exp\left(-\left(1-\frac{1}{\sqrt{2}}\right)\prod_{k=1}^nD_k^{\xi'}\right) + \exp\left(-\frac12\left(1-\frac{1}{\sqrt{2}}\right)D_n^{\xi'}\right)+\exp\left(-\prod_{k=1}^nD_k^\xi\right)\nonumber\\
&\leq (n+1)^{-3/2}
=1-\ell_n.
\end{align}
We thus choose $n_1$ so large that the last bound in \eqref{recursiveBound2} holds.  
Using \eqref{prodPositive2}, \eqref{recursiveBound2} and $\Prob(L_n)>0$ for all $n$, yields that
\begin{equation*}
\prod_{n=1}^\infty\Prob(L_n)=\prod_{n=1}^{n_1}\Prob(L_n)\prod_{n=n_1+1}^\infty \Prob(L_n)\geq \prod_{n=1}^{n_1}\Prob(L_n)\prod_{n=n_1+1}^\infty \ell_n>0.
\end{equation*}

Recalling that, by \eqref{eqDefAn} and \eqref{dnDoubleExp}, this is equivalent to \eqref{constraintsAk1}, the bound \eqref{boundLn2} concludes the proof. \qed
\medskip

To prove Theorem~\ref{thm:trees} we need extend the above claims from the specific sequence $\{m_n\}_{n=1}^\infty$ to \emph{all} (sufficiently large) $m\in \mathbb{N}$. This extension is the content of the next lemma. After this lemma we extend the claim to hold for all $\lambda>0$ and weights following a power-law given by \eqref{e:Wprime}. 
\begin{lemma}\label{Lemma:Density}
Consider SFP on $\Zd$ with $d\ge 1, \gamma<2$, and weight distribution that satisfies \eqref{standardPowerLaw}. Independently of this, perform i.i.d.\ Bernoulli site percolation on the vertices of $\Zd$, colouring a vertex ``green'' with probability $\mu\in(0,1]$. Denote by $\mathcal{S}_{m, \lambda,\mu}$ the SFP-realization induced by the green vertices in the box $[0, m-1]^d$. We call this the site-percolated SFP. 

Then there exist a density $\rho>0$ and constants $\mu_1<1, \lambda_2>0$, and $K', m_0\in\mathbb N$, such that for $m\geq m_0$, and parameters $\lambda\ge \lambda_2, \mu\ge \mu_1$, the probability that $\mathcal{S}_{m, \lambda,\mu}$ with parameters contains a $(0, m,\rho,K')$-hierarchically clustered tree inside the box $[0,m-1]^d$ is bounded from below by 
\[
1-3\exp\left(-\rho m^\xi \right),
\] 
 whenever ${\xi<\min\left(\frac{d(2-\gamma)}{\tau+1},\frac{d}{2}\left(\tau+2-\sqrt{(\tau+2)^2-4(2-\gamma)}\right)\right)}$.
\end{lemma}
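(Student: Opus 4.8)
The plan is to bootstrap from the sparse sequence $\{m_n\}$ supplied by Proposition~\ref{lemLargestCluster} to an arbitrary scale $m$ by one \emph{partial} renormalization step. Given $m \ge m_0$, let $n$ be the largest index with $m_n \le m$, so that $m_n \le m < m_{n+1} = m_n D_{n+1}$, and set $M := \lfloor m/m_n\rfloor$, which satisfies $1 \le M < D_{n+1}$ and $Mm_n \ge m/2$. I would tile the box $[0,m-1]^d$ by $M^d$ disjoint $m_n$-boxes (discarding a boundary layer of width $<m_n$) and call such a sub-box \emph{good} if it contains a $(\cdot,m_n,\rho,K)$-hierarchically clustered tree; by Proposition~\ref{lemLargestCluster}(1) the sub-boxes are good independently, each with probability at least $1-3\exp(-\rho m_n^\xi)$. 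A binomial tail bound as in \eqref{clusterEc}, together with the elementary inequality $M^d m_n^\xi \ge 2^{-\xi}m^\xi$ (valid since $\xi<d$), shows that at least half of the sub-boxes are good except with probability at most $\exp(-\rho' m^\xi)$ for a suitable $\rho'>0$; on this event the union of the corresponding trees already contains at least $\tfrac12 M^d\rho m_n^d \ge \rho' m^d$ vertices, which yields the density requirement (1) of Definition~\ref{def:hct} with $\rho'=\rho/2^{d+1}$.

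It remains to join these trees into a single tree, and this is the crux. Each good sub-box carries a root of weight at least $u_n$, and the natural move is to attach all of these roots to one high-weight centre $w^*$, forming a star of sub-trees. The key point is the \emph{choice of centre weight}: a centre of weight only $u_n$ is too weak to reach across a box of side $m$, which in the worst case $m\approx m_{n+1}$ is of order $m_n^\zeta$, whereas demanding weight $u_{n+1}$ would require more high-weight vertices than a box of side $m<m_{n+1}$ typically contains. I would therefore take $w^*$ to be the maximum-weight vertex of $[0,m-1]^d$ and work with the intermediate threshold $u^*:=m^{(d-\xi')/(\tau-1)}$: by a computation as in Lemma~\ref{maximumWeight} the event $\{W_{w^*}\ge u^*\}$ fails with probability at most $\exp(-m^{\xi'})$. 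Connecting $w^*$ to each of the (at most $D_{n+1}^d$, hence polynomially many) good roots, each edge is open, conditionally on the weights, with probability at least $1-\exp(-\lambda u^* u_n(dm)^{-\alpha})$; the algebraic relation $1+1/\zeta=(\alpha+\xi')(\tau-1)/(d-\xi')$ forced by \eqref{eqDefZeta} makes the exponent collapse to order $m^{\xi'}$, and since $m^{1/\zeta}\lesssim m_n\le m$ this lower bound is uniform over the entire range $m_n\le m<m_{n+1}$. A union bound then opens all star edges except with probability $\exp(-\rho' m^\xi)$.

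Finally I would verify that the resulting star-of-trees $T$ meets the four conditions of Definition~\ref{def:hct}. Density was handled above. For the ultra-small-world bound, any two vertices of $T$ are joined through $w^*$ by at most $2(K\log\log m_n+1)\le K'\log\log m$ edges, giving condition~(2) with $K'=2K+2$. Rooting $T$ at $w^*$, which is the global maximum-weight vertex, produces step-by-step decreasing weights, since each sub-tree already has ordered weights from its own root and $W_{w^*}$ dominates every sub-root; this is condition~(3). For the spatial clustering~(4): removing a star edge detaches exactly one sub-tree, which lives on its own $m_n$-box, disjoint from the remaining sub-boxes and from $w^*$; removing an internal edge of some sub-tree reduces, by that tree's own clustering property, to a split whose small side sits inside a single $m_n$-box, again disjoint from every other sub-box. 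Collecting the three failure probabilities (density, centre, connection) by a union bound yields the claimed $1-3\exp(-\rho' m^\xi)$, and renaming $\rho',K'$ completes the proof.

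The hard part is the connection step of the second paragraph. One must route the star through a vertex of \emph{exactly} the intermediate weight $u^*$, and the tension is that this threshold must be simultaneously large enough to bridge the distance $\sim m$ (which can be as large as $m_n^\zeta$) yet small enough that such a vertex still exists in a box of side $m<m_{n+1}$. That both constraints can be met at once is not accidental: it is precisely what the defining relation \eqref{eqDefZeta} for $\zeta$ guarantees, the same relation that drives the estimate \eqref{clusterGc} in Proposition~\ref{lemLargestCluster}.
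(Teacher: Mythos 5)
Your proposal is correct and follows essentially the same route as the paper: one partial renormalization step tiling $[0,m-1]^d$ by $\lfloor m/m_n\rfloor^d$ boxes of side $m_n$, a Chernoff bound forcing half of them to be good, and a star through the global maximum-weight vertex at the intermediate threshold $m^{(d-\xi')/(\tau-1)}$, with the relation defining $\zeta$ ensuring the connection exponent is of order $m^{\xi'}$. The only cosmetic differences are that the paper defines "good" via the event $L_n$ of Proposition~\ref{lemLargestCluster} rather than directly via containment of a hierarchically clustered tree, and takes the centre threshold as $(km_n)^{(d-\xi')/(\tau-1)}$, which agrees with yours up to a bounded factor.
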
 
\proof
Let the constants $\mu_1, \lambda_1, \xi', K$, the sequences $\{D_k\}, \{u_k\}, \{C_k\}$ and $\{m_k\}$ be as in Proposition~\ref{lemLargestCluster}, and $\zeta$ as in \eqref{eqDefZeta}. 
Assume $\mu\geq \mu_1, \lambda\geq \lambda_1$, and let $m$ be large enough (how large precisely will be determined in several steps). 
We define
\begin{equation}\label{def:nm}
n =\sup\{i:m_i\leq m\}, \qquad\text{ and }\qquad k=\left\lfloor \frac{m}{m_n}\right\rfloor,
\end{equation}
both depending on $m$, and note that $n\to\infty$ as $m\to\infty$. 
Partition the box $[0,km_n]^d$ into $k^d$ boxes of side-length $m_n$. We call these the $n$-boxes. Let $v^\ast$ be the vertex in $[0,km_n]^d$ with maximum weight. 

We use the same definition of good boxes and dominant vertices as in the proof of Proposition \ref{lemLargestCluster}. So in particular, an $n$-box is good if its $n$-dominant vertex has weight at least $u_n=m_n^{(d-\xi')/(\tau-1)}$.
We define $E$ to be the event that at least $\frac{1}{2}k^d$ $n$-boxes are good, $F$ the event that $W_{v^\ast}\geq (km_n)^{(d-\xi')/(\tau-1)}$, and $G$ the event that every good $n$-box's $n$-dominant vertex is connected by an open edge to $v^\ast$. Let $L=E\cap F\cap G$. 

Observe that the event $L$ implies indeed that there exists a $(0, m, \rho, K)$-hierarchically clustered tree. Properties~(1), (3) and (4) of Definition \ref{def:hct} readily follow from Proposition~\ref{lemLargestCluster}. For Property~(2), we observe that, for any $n\in\mathbb{N}$, the diameter of the constructed tree, after connecting the separate trees via $v^\ast$, only increases by 
2 in comparison to Proposition \ref{lemLargestCluster}. Hence there exists $K'>0$, such that Property~(2) is satisfied.
We bound 
\begin{equation}\label{eq:clusterGeneralBound}
\Prob(L^c)\leq \Prob(E^c)+\Prob(F^c\mid E)+\Prob(G^c \mid E\cap F),
\end{equation}
and analyze the three summands term by term. 

By our assumption \eqref{standardPowerLaw}, all $(km_n)^d$ vertices have weight at least 1. By Lemma \ref{maximumWeight},
\begin{equation}\label{eq:clusterBoundWeight}
	\Prob\left(F^c \mid E\right)\leq \exp(-(km_n)^\xi).
\end{equation}

The $\ell^1$-distance between two vertices in $[0,km_n]^d$ is bounded above by $dkm_n$, and the number of $n$-dominant vertices is at most $k^d$. Therefore, 
\begin{equation}\label{eqGcEFbound}
\Prob(G^c \mid E\cap F)\leq k^d \exp\left(-\lambda d^{-\alpha}m_n^{2\frac{d-\xi}{\tau-1}-\alpha}k^{\frac{d-\xi}{\tau-1}-\alpha} \right).
\end{equation}
We claim that
\begin{equation}\label{eq-complicatedbound}
m_n^{2\frac{d-\xi}{\tau-1}-\alpha}k^{\frac{d-\xi'}{\tau-1}-\alpha} \geq c\left(km_n\right)^{\xi'}, \qquad \text{for some constant } c>0.
\end{equation}
Indeed, by our choice of $n$ and $k$ in \eqref{def:nm}, $D_{n+1}=\left\lceil D_n^\zeta\right\rceil < D_n^\zeta+1$, and $m_n=\prod_{k=1}^nD_k$ by their definitions in \eqref{formsDn} and \eqref{e:mprimedef},  we have 
\[
\frac{k}{m_n^{\zeta-1}} \leq \frac{D_{n+1}}{m_n^{\zeta-1}} \le \frac{D_n^\zeta + 1}{D_n^{\zeta-1}\prod_{k=1}^{n-1} D_k^{\zeta-1}}= \frac{D_n}{m_{n-1}^{\zeta-1}} + \prod_{k=1}^n D_k^{1-\zeta}. 
\]
Iterating this gives 
\begin{align*}
\frac{k}{m_n^{\zeta-1}} \leq  \frac{D_{n+1}}{m_n^{\zeta-1}} &\leq \frac{D_n}{m_{n-1}^{\zeta-1}} +\prod_{k=1}^n D_k^{1-\zeta} \\
&\leq \frac{D_{n-1}}{m_{n-2}^{\zeta-1}} + \prod_{k=1}^{n-1} D_k^{1-\zeta}+\prod_{k=1}^n D_k^{1-\zeta}\leq \hdots\leq D_1 + \sum_{k=1}^n\left(\prod_{l=1}^{k}D_l\right)^{1-\zeta}.
\end{align*}
Recall that  by \eqref{eqDefZeta} $\zeta>1$, and by \eqref{formsDn} $\prod_{l=1}^kD_l \geq (1/D_1)D_1^{\zeta^k/(\zeta-1)}$, so
\begin{align}
\frac{k}{m_n^{\zeta-1}} & \le D_1 + \sum_{k=1}^n\left(\prod_{l=1}^{k}D_l\right)^{1-\zeta}
 \le D_1 + \sum_{k=1}^\infty\left(\prod_{l=1}^{k}D_l\right)^{1-\zeta}
\le D_1 + \sum_{k=1}^\infty D_1^{-\zeta^k} <\infty.
\end{align}
Hence, we can further estimate for some $c'>0$
\[
	k \leq c'm_n^{-1+\zeta} = c'm_n^{-1+\frac{d-\xi'}{(\alpha+\xi')(\tau-1)-(d-\xi')}} = c'm_n^{-1+\frac{d-\xi'}{\tau-1}\left(\alpha+\xi'-\frac{d-\xi'}{\tau-1}\right)^{-1}}, 
\]
so that 
\[
k^{\alpha+\xi'-\frac{d-\xi'}{\tau-1}} \leq {c'}^{\alpha+\xi'-\frac{d-\xi'}{\tau-1}}m_n^{-\left(\alpha+\xi'-\frac{d-\xi'}{\tau-1}\right)+\frac{d-\xi'}{\tau-1}},
\]
and finally
\[ 
{c'}^{\frac{d-\xi'}{\tau-1}-\xi'-\alpha}(km_n)^{\xi'}  \leq m_n^{2\frac{d-\xi'}{\tau-1}-\alpha}k^{\frac{d-\xi'}{\tau-1}-\alpha},
\]
from which \eqref{eq-complicatedbound} follows. 
Consequently, by \eqref{eqGcEFbound}, 
\begin{equation}\label{eq:clusterBoundConnections}
\Prob(G^c \mid  E\cap F)
\leq k^d \exp\left(-\lambda d^{-\alpha}{c'}^{\frac{d-\xi'}{\tau-1}-\xi'-\alpha}(km_n)^{\xi'}\right)
\leq \exp\left(-(km_n)^\xi\right), 
\end{equation}
where we choose $\lambda$ sufficiently large for the second bound (this determines the value of $\lambda_2$). 

It remains to bound $\Prob(E^c)$. By Proposition \ref{lemLargestCluster}, there exists $\rho>0$, such that for $n$ large, $n$-boxes are good independently of each other with probability at least $1-\exp(-\rho m_n^\xi)$. Let $X\sim \text{Bin}\big(k^d,1-\exp\big(-\rho m_n^\xi\big)\big)$. 
Writing out the binomial distribution, using  $\binom{n}{k}\leq n^k$ and $1-\exp(-x)\leq 1$, further bounding the sum by its maximum and the number of terms, we obtain
\begin{align*}
\Prob(E^c)
=\Prob\left(X<\frac{1}{2} k^d\right)
&=\sum_{l=0}^{\left\lfloor \frac{1}{2} k^d\right\rfloor}\binom{k^d}{l}\exp\left(-\rho m_n^\xi\right)^{k^d-l}\left(1-\exp\left(-\rho m_n^\xi\right)\right)^l\\
&\leq \left(\frac{1}{2} k^d+1\right)k^{\frac{1}{2}d k^d}\exp\left(-\frac{1}{2}\rho k^dm_n^\xi\right)\\
&=\exp\left(\log\left(\frac{1}{2} k^d+1\right)+\frac{1}{2}d k^d\log(k)-\frac{\rho}{2}k^dm_n^\xi\right).
\end{align*}
For any $\varepsilon$ with $0<\varepsilon<d-\xi$, we can take $m$ large enough so that  (using $k^{d-\xi}\ge1$)
\begin{equation}\label{eq:numberGood}
\Prob(E^c) \leq \exp\left(-\frac{\rho}{2^{1+\varepsilon}}(km_n)^{\xi}\right).
\end{equation}
Combining \eqref{eq:clusterGeneralBound}, \eqref{eq:clusterBoundWeight}, \eqref{eq:clusterBoundConnections}, and \eqref{eq:numberGood} gives that for $m$ sufficiently large,
\[\Prob(\mathcal{S}_{km_n}\text{ contains a }(0, km_n, \frac{\rho}{2},K)\text{-hierarchically clustered tree})\geq 1-3\exp\left(-\frac{\rho}{2^{1+\varepsilon}}(km_n)^\xi\right).\]
From the construction it follows that
\[
\frac12\leq\frac{{km_n}}{m}\leq1,
\]
so that for $m$ large
\begin{align*}
\Prob(\mathcal{S}_{m}&\text{ contains a }(0, m, \frac{\rho}{2^{d+1}},K')\text{-hierarchically clustered tree})\\
&\geq \Prob(\mathcal{S}_{km_n}\text{ contains a }(0, km_n, \frac{\rho}{2},K')\text{-hierarchically clustered tree})\\
&	\geq 1-3\exp\left(-\frac{\rho}{2^{1+\varepsilon}}\frac{m^\xi}{2^\xi}\right)
	\geq 1-\exp\left(-\frac{\rho}{2^{d+1}}m^\xi\right), 
\end{align*}
which finishes the proof. \qed
 
The last step is to extend the claim to hold for all $\lambda>0$ and weights following a power-law given by \eqref{e:Wprime}.
\begin{proof}[Proof of Theorem \ref{thm:trees}]
Recall that we consider SFP models with $1 < \gamma < 2$ and any $\lambda >0$, and that in this setting, $\lambda_c =0$ \cite{DeijfenScaleFree}.
Let $\mu_1$ and $\lambda_1$ be the values that we obtain from Lemma \ref{Lemma:Density}. To apply Lemma \ref{Lemma:Density}, we partition $\Zd$ into $N$-boxes. In every $N$-box we only consider the vertex with maximum weight and call it the dominant vertex.  Choose $\beta$ large enough, using Lemma \ref{transientLemmaConnectivity}, such that two $N$-boxes that are $k$-boxes apart, with dominant vertices $u_1$ and $u_2$ having weight at least $\beta N^{\alpha/2}$,  are connected by an open edge between $u_1$ and $u_2$ with probability at least
\[
\P_{(\lambda_1,W'')}(\{v_1, v_2\} \text{ is open}),
\]
for $v_1, v_2 \in \Zd$ such that $|v_1 -v_2| =k$ and weights $W''$ with law given by \eqref{e:Wprime}.
Define the $N$-boxes that contain a vertex with weight at least $\beta N^{\alpha/2}$ to be the good boxes. Choose $N$ large enough so that the probability that an $N$-box is good is larger than $\mu_1$ using Lemma~\ref{transientLemmaBigDegrees}.
Thus, the status of the edges between dominant vertices in good $N$-boxes in the SFP model with parameters $\alpha, \lambda$ and weight-law $W$ stochastically dominates an SFP model on $\Zd$ with parameters $\alpha$, $\lambda_1$ and weight-law $W''$ combined with a site percolation of intensity $\mu_1$, exactly as described in Lemma~\ref{Lemma:Density}. 
Let $K'$ and $\rho'$ be the constants  we obtain from Lemma~\ref{Lemma:Density}. Observe that
\[\frac{\left\lfloor\frac{m}{N}\right\rfloor N}{m}\geq \frac{1}{2}.\]
Then the assertions of Theorem \ref{thm:trees} follows if we set $K=K'$ and $\rho=\frac{\rho'}{2^dN^d}$.
\end{proof}

\subsection*{Acknowledgement}
JJ thanks the Erasmus+ programme for funding and the LMU Munich for hospitality during a three-month stay in autumn 2015. 
The work of TH is supported by the Netherlands Organisation for Scientific Research (NWO) through the Gravitation NETWORKS grant 024.002.003. 
We thank Remco van der Hofstad for suggesting the model to us and Thomas Beekenkamp for comments and corrections on an earlier version of the manuscript.

\end{document}